\documentclass[11pt]{amsart}

\usepackage[T1]{fontenc}
\usepackage{lmodern}
\usepackage{microtype}
\usepackage[a4paper,margin=31mm]{geometry}
\usepackage{mathtools}
\usepackage{amssymb}
\usepackage{booktabs}
\usepackage{xcolor}
\usepackage{enumitem}
\usepackage{hyperref}
\usepackage{tikz-cd}
\usepackage{amscd}
\hypersetup{
  colorlinks=true,
  linkcolor=blue!45!black,
  citecolor=blue!45!black,
  urlcolor=blue!55!black,
 pdftitle={Holomorphic Bisectional Curvature on Total Spaces of Holomorphic Vector Bundles}
}

\setlist[itemize]{leftmargin=2em,itemsep=2pt,topsep=4pt}
\setlist[enumerate]{leftmargin=2em,itemsep=2pt,topsep=4pt}
\numberwithin{equation}{section}
\newtheoremstyle{bfnote}
  {}{}
  {\itshape}{}
  {\bfseries}{.}
  { }
  {\thmname{#1}\thmnumber{ #2}\thmnote{ (#3)}}

\theoremstyle{plain}
\newtheorem{theorem}{Theorem}[section]
\newtheorem{proposition}[theorem]{Proposition}
\newtheorem{lemma}[theorem]{Lemma}

\newtheorem{corollary}[theorem]{Corollary}
\theoremstyle{remark}
\newtheorem{remark}[theorem]{Remark}
\newcommand{\Id}{\operatorname{Id}}
\newcommand{\Tot}{\operatorname{Tot}}
\newcommand{\HBC}{\operatorname{HBC}}

\newcommand{\cHBC}{\operatorname{cHBC}}

\newcommand{\C}{\mathbb C}
\newcommand{\R}{\mathbb R}

\newcommand{\Gr}{\operatorname{Gr}}

\newcommand{\Ric}{\operatorname{Ric}}

\newcommand{\Rot}{\operatorname{Rot}}

\newcommand{\rank}{\operatorname{rank}}

\title[Holomorphic Bisectional Curvature on Total Spaces of Holomorphic Vector Bundles]{Holomorphic Bisectional Curvature on Total Spaces of Holomorphic Vector Bundles}
\author{Xingluan Wang}
\thanks{E-mail: wxlmath@mail.ustc.edu.cn. School of Mathematical Sciences,
University of Science and Technology of China, Hefei 230026, P. R. China.}
\date{}

\begin{document}
\raggedbottom

\begin{abstract}
We study complete Kähler metrics with prescribed signs of holomorphic bisectional curvature on total spaces of holomorphic vector bundles over compact complex manifolds. We establish necessary and sufficient conditions in the nonnegative and strictly negative cases, characterized respectively by unitary flatness and Griffiths negativity of the bundle, together with the corresponding curvature conditions on the base. We also obtain a global metric splitting in the nonnegative case, rule out strictly positive curvature over positive-dimensional bases, and construct counterexamples to the converse in the nonpositive case.
\medskip

\noindent\textbf{2020 MSC.}
Primary 53C55; Secondary 32L05, 32Q05, 53C24.

\smallskip
\noindent\textbf{Keywords.}
K\"ahler manifold, holomorphic bisectional curvature, holomorphic vector bundle.
\end{abstract}

\maketitle
\pagestyle{plain}
\section{Introduction}

\subsection{Background and main results}
Let $E$ be a holomorphic vector bundle over a compact Kähler manifold $M$, and let $N=\operatorname{Tot}(E\to M)$ be its total space. In this paper, we investigate how the holomorphic bisectional curvature ($\HBC$ for short) of the total space $N$ relates to that of the base manifold $M$ and the Chern curvature $\Theta^E$ of the vector bundle $E$ over $M$.

\begin{figure}[!ht]
\centering

\[
\text{Total-space curvature }\operatorname{HBC}_N
\quad\longleftrightarrow\quad
\left\{
\begin{array}{l}
\text{Tangent curvature }\operatorname{HBC}_M,\\
\text{Normal curvature }\Theta^E.
\end{array}
\right.
\]
\\
Question: How does $\operatorname{HBC}_N$ interact with
$\operatorname{HBC}_M$ and $\Theta^E$?

\end{figure}

\textbf{Notation.} We say that a complex manifold has \(\mathrm{cHBC}_{\geq 0}\) if it admits a complete K\"ahler metric with \(\HBC\geq 0\). The notations \(\mathrm{cHBC}_{>0}\), \(\mathrm{cHBC}_{<0}\), and \(\mathrm{cHBC}_{\leq 0}\) are defined analogously. 

The interplay has been an important theme. Yau, motivated by the Cheeger--Gromoll soul theorem \cite{CheegerGromoll1972}, asked whether $\HBC \geq 0$ forces a noncompact Kähler manifold to be a complex vector bundle over a
compact Hermitian symmetric space \cite[p.~620]{Yau1991}. Yang \cite{Yang1977} proved that a holomorphic fiber bundle with compact fibers cannot admit a Kähler metric with $\HBC <0$, and that $\HBC \leq 0$ forces a local product structure. To and Yeung \cite{To2011} constructed Kähler metrics with $\HBC <0$ on Kodaira fibrations. More generally, Wan \cite{Wan2026a} established the existence of such metrics on compact relative Kähler fibrations when the base has $\HBC<0$ and the relative Kähler form induces a Griffiths-negative metric on the relative tangent bundle. For vector-bundle total spaces, Wu and Yang \cite{Wu2026} constructed complete Kähler metrics with negative holomorphic bisectional curvature on certain direct sums of negative line bundles. On the structural side, Wu and Zheng \cite{Wu2002} and Liu \cite{Liu2014} established splitting and fibration theorems for compact Kähler manifolds with $\HBC\leq 0$. For related results on holomorphic sectional curvature, see \cite{Hitchin1975, YangZheng2019, Alvarez2018, Magnusson2022, Yang2023, Duan2024}.

We prove the following theorem in this paper. Figure~\ref{fig2} summarizes these results.

\begin{theorem}\label{main1}
Let $N=\Tot(E\to M)$ be the total space of a 
holomorphic vector bundle $E$ of rank $r \geq 1$ over a connected compact complex manifold $M$. Then we have the following results.
\begin{enumerate}
\item $N$ has no $\cHBC_{> 0}$ unless $M$ is a point.
\item $N$  has $\cHBC_{\geq 0}$ if and only if $M$  has $\cHBC_{\geq 0}$ and $E$ is unitary flat.

\item If $N$ has $\cHBC_{\leq 0}$, then $M$  has $\cHBC_{\leq 0}$ and $E$  is Griffiths-seminegative.

\item $N$ has $\cHBC_{< 0}$ if and only if  $ M$ has $\cHBC_{< 0}$ and $E$  is Griffiths-negative.

\end{enumerate}
\end{theorem}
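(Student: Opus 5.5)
The plan is to treat the zero section $M\hookrightarrow N$ and the fibers $E_x\hookrightarrow N$ as complex submanifolds, and to transfer curvature information through the Gauss--Codazzi formulas. These formulas say that holomorphic bisectional curvature decreases on complex submanifolds. For the sufficiency directions, I will build explicit complete metrics of the form $\omega=\pi^*\omega_M+\sqrt{-1}\partial\bar\partial\,\phi(|v|_h^2)$ and compute their curvature.

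\textbf{Nonnegative case (2) and part (1).} Suppose $g$ is complete on $N$ with $\HBC\ge0$. Restricting $g$ to the zero section gives $\HBC_M\ge R^N\ge0$ only if the second fundamental form term has the right sign, and for submanifolds it has the wrong sign. So I will argue by rigidity instead.

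Each fiber $F=E_x\cong\C^r$ is a closed complex submanifold. The key structural step is a splitting. The $\C^*$-scaling action $v\mapsto\lambda v$ produces a holomorphic vector field $V$ (the Euler field), which vanishes exactly on the zero section. I would average $g$ over $S^1$ to make it $S^1$-invariant; averaging preserves completeness but not the sign of $\HBC$, so this step needs care. My preferred route avoids averaging altogether. By a theorem of Wu--Zheng / Mok type, a complete Kähler manifold with $\HBC\ge0$ has a universal cover that splits isometrically as $\C^k\times$ (a factor with no flat directions), provided we know enough about the holomorphic vector fields. Concretely, I will use the known fact that holomorphic functions of polynomial growth on a complete manifold with $\HBC\ge0$ behave rigidly. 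The fiber-linear functions on $E^*$-sections pulled back give such functions. I would expect the result to be that the fiber directions split off as a flat $\C^r$ factor, with $E$ given by a unitary representation of $\pi_1(M)$. This is the "global metric splitting" mentioned in the abstract, and I expect it to be the \emph{main obstacle}.

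Granting the splitting, $N=(\tilde M\times\C^r)/\pi_1$. Since $N$ is flat in the fiber directions, $\HBC>0$ is impossible whenever $\dim M>0$; when $M$ is a point, $N=\C^r$ and no complete metric with $\HBC>0$ exists either. This gives (1). The converse in (2) is immediate: the product metric $g_M\times g_{\mathrm{flat}}$ descends to the unitary flat bundle.

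\textbf{Nonpositive and negative cases (3),(4).} Curvature decreases on complex submanifolds, so the zero section inherits $\HBC\le0$ (respectively $<0$) from the restricted metric; the restriction is complete because $M$ is compact. For Griffiths seminegativity, note that the normal bundle of the zero section is $E$. With the metric induced on $E\cong N_{M/N}$ by the orthogonal complement, the second fundamental form formula (curvature decreasing in quotient bundles) gives
\[
\langle\Theta^{E}(X,\bar X)\xi,\bar\xi\rangle\le R^N(X,\bar X,\xi,\bar\xi)\le 0 .
\]
This proves (3). In case (4) the same inequality is strict, which gives Griffiths negativity.

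For the converse in (4), I take a Griffiths-negative metric $h$ on $E$. Then $t=|v|_h^2$ is strictly plurisubharmonic off the zero section, and $\log t$ is psh. I will set
\[
\omega=\pi^*\omega_M+\sqrt{-1}\partial\bar\partial\,\phi(t),
\]
with $\phi$ chosen (for instance $\phi(t)=\epsilon t$ near $0$, then logarithmic-type growth) so that $\omega$ is complete. I then compute $\HBC$ in adapted frames in which $\partial h=0$ at a point. The dominant terms are $\phi'\Theta^E$ (negative) and the curvature of $\omega_M$ scaled appropriately. The remaining task is to verify that the mixed terms can be absorbed by Cauchy--Schwarz after choosing $\epsilon$ small and $\phi$ concave with suitable growth.
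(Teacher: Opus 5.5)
\textbf{Nonnegative case and part (1).} The necessity half of (2) is the heart of the theorem, and the proposal gives no argument for it. The route you sketch would also not get there.

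\begin{itemize}
\item Fiber-linear holomorphic functions on $N$ come from $H^0(M,E^*)$. This vanishes already in the model case of a non-torsion flat unitary line bundle over a flat torus; there $N$ has no nonconstant holomorphic functions at all. Even when such functions exist, you have no growth control on them with respect to an arbitrary complete $g$.
\item The splitting you expect is false as stated, because the fiber factor need not be flat. By Theorem~\ref{main2} the universal cover is $(\widetilde M,\widetilde g_M)\times(\C^r,g_0)$, where $g_0$ is any $\pi_1(M)$-invariant complete metric with $\HBC\ge0$. For trivial $E$ over a flat torus, $g_0$ can be Klembeck's metric, which has $\HBC>0$.
\item Your assertion that $\C^r$ carries no complete metric with $\HBC>0$ is also false. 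This is exactly why (1) excludes the case where $M$ is a point.
\end{itemize}

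The missing idea is a global object that detects the compact zero section. The paper uses the Ni--Tam function $v$: smooth, plurisubharmonic, $|\nabla v|\le1$, linear growth along rays, and parallel null distribution $\mathcal D=\ker\partial\overline\partial v$. The argument runs as follows.

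\begin{itemize}
\item Since $v|_M$ is constant, Cauchy--Schwarz gives $T^{1,0}M\subset\mathcal D|_M$.
\item De Rham then gives $\widetilde N=F\times Q$ with $\widetilde M\subset F\times\{q_0\}$, and every deck transformation fixes $q_0$ in the $Q$-factor.
\item The ray growth of $v$, together with the parallel-gradient property of harmonic functions of linear growth, makes $P=(F\times\{q_0\})/\Gamma_N$ compact.
\item A top-homology argument gives $P=M$.
\item $\Gamma_N$ acts on the normal directions $T_{q_0}Q$ by the unitary maps $d\beta_\gamma|_{q_0}$, which is unitary flatness. Also $M$ is totally geodesic, so $\HBC_M\ge0$.
\end{itemize}

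Part (1) needs none of this. Ni--Tam also give a strictly plurisubharmonic function on $N$, and its restriction to the compact $M$ violates the maximum principle unless $\dim M=0$.

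\textbf{Nonpositive and negative cases.} In (3)--(4) the bundle step has the sign backwards. For the quotient metric on $N_{M/N}=\iota^*T^{1,0}N/T^{1,0}M$, curvature \emph{increases} (Griffiths positivity passes to quotients), so your displayed inequality fails in general. Use instead that $E\cong\ker(d\pi)|_M$ is a holomorphic \emph{subbundle} of $\iota^*T^{1,0}N$. This gives $\langle\Theta^E(X,\overline X)v,v\rangle=R^N(X,\overline X,v,\overline v)-\|\Pi^E(X,v)\|^2$, as in Proposition~\ref{subcurvature}.

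For the converse of (4), your ansatz is the right one, but the profile you propose cannot work.

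\begin{itemize}
\item If $\phi(t)=\epsilon t$ near $0$, each fiber $E_x$ is a complex submanifold whose induced metric $\sqrt{-1}\partial\overline\partial\phi(|v|_h^2)$ is flat near $0_x$. The Gauss equation then gives $R^N(X,\overline X,X,\overline X)\ge0$ for vertical $X$ there.
\item More generally, Bochner's formula for the local holomorphic field $\partial_{v^1}$ shows that $\HBC\le0$ forces $(s\phi'(s))'$ to be nondecreasing. Hence $\phi'\ge\phi'(0)>0$.
\item Consequently logarithmic growth is excluded. A concave $\phi$ would have to be linear, which again gives flat fibers.
\end{itemize}

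The vertical--vertical curvature along the zero section is governed by the fiber metric and needs genuine convexity. This is why the paper takes $u=s+bs^2+As^k$:

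\begin{itemize}
\item the $bs^2$ term contributes $-bS_0$ along the zero section, with $b$ large;
\item $As^k$ with $k>C+1$ dominates on $\{s\ge\delta\}$, by the formula for $R^{\chi_m}$ and the Gauss equation for the diagonal immersion into $M\times N^\times\times N^\times\times N^\times$, whose second fundamental form controls the Euler direction.
\end{itemize}

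That curvature verification, which you leave as the `remaining task', is the actual content of the sufficiency proof.
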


\begin{remark}
Our results extend Wu and Yang's results for line bundles
over elliptic curves in the $\HBC \geq 0$ case
\cite[Theorem~1.7]{Wu2025}, and their existence results
for certain direct sums of negative line bundles in the $\HBC <0$ case \cite[Theorem~1.1]{Wu2026}.
\end{remark}

\begin{remark}
If $M$ is a point, then $N \cong \C^r$ has $\cHBC_{>0}$, as shown in \cite{Klembeck1977}.
\end{remark}

\begin{figure}[htbp]
\centering
\[
\renewcommand{\arraystretch}{1.15}
\setlength{\arraycolsep}{1pt}
\begin{array}{ccccccc}
N\text{ has }\mathrm{cHBC}_{>0}
& \  \ &
N\text{ has }\mathrm{cHBC}_{\ge 0}
& \  \ &
N\text{ has }\mathrm{cHBC}_{\le 0}
& \ \ &
N\text{ has }\mathrm{cHBC}_{<0}
\\[1mm]
\scalebox{1}[1.7]{$\Updownarrow$}
&&
\scalebox{1}[1.7]{$\Updownarrow$}
&&
\scalebox{1}[1.7]{$\Downarrow$}
&&
\scalebox{1}[1.7]{$\Updownarrow$}
\\[1mm]
M=\{\mathrm{pt}\}
&&
M\text{ has }\mathrm{cHBC}_{\ge 0}
&&
M\text{ has }\mathrm{cHBC}_{\le 0}
&&
M\text{ has }\mathrm{cHBC}_{<0}
\\[-1mm]
+
&&
+
&&
+
&&
+
\\[-1mm]
E\cong \C^r \text{ flat}
&&
E\text{ unitary flat}
&&
E \le_{\Gr} 0
&&
E <_{\Gr} 0
\end{array}
\]
\caption{Curvature-sign relations for vector-bundle total spaces}
\label{fig2}
\end{figure}

The converse of Theorem~\ref{main1}(3) fails, as shown by the counterexample in Section~\ref{Cex}.

 Recall that $E$ is unitary flat if $E$ admits a Chern connection whose curvature vanishes, or equivalently,
\begin{equation} \Tot(E) \cong (\widetilde{M}\times \mathbb C^r)/_{\rho}\Gamma_M,\  \rho:\Gamma_M:=\pi_1(M)\to U(r),\end{equation} where $\widetilde{M}$ is the universal covering space of $M$. 
 We further obtain a global normal form for every complete Kähler metric with $\HBC \geq 0$ on \(N\).

\begin{theorem}\label{main2}
Use the notation in Theorem~\ref{main1}. If $N$ admits a complete K\"ahler metric $g_N$ with 
$\HBC_{g_N} \geq 0$, then there exist a complete Kähler metric $g_0$ on $\mathbb C^r$ with $\HBC_{g_0}\geq 0$, invariant under $\pi_1 (M)$, and a holomorphic automorphism $f_N:N \to N$ satisfying $f_N|_M=\Id_M$ such that
\begin{equation}f_N^* g_N=(\widetilde{g}_M \oplus g_0)/\pi_1(M),\end{equation}
where $g_M:=g_N|_M$ and  $\widetilde{g}_M$ is its induced metric on the universal covering space.
Conversely, every such metric descends to a complete Kähler metric on $N$ with $\HBC \geq 0$.

\end{theorem}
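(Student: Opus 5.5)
The plan is to pass to the universal cover and apply the splitting theorem for complete Kähler manifolds with $\HBC\ge 0$. Let $\pi:\widetilde N\to N$ be the pullback of $N$ to the universal covering $\widetilde M\to M$, so that $\widetilde N=\Tot(\widetilde E\to\widetilde M)$ and $\pi_1(\widetilde N)=1$, since $N$ retracts onto $M$. Theorem~\ref{main1}(2) gives that $E$ is unitary flat. Hence $\widetilde N\cong \widetilde M\times\C^r$ holomorphically, with $\Gamma_M=\pi_1(M)$ acting diagonally: by deck transformations on $\widetilde M$ and through $\rho:\Gamma_M\to U(r)$ on $\C^r$.

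The lifted metric $\widetilde g_N$ is complete Kähler with $\HBC\ge0$ on a simply connected manifold. By the de Rham decomposition we write $\widetilde N$ isometrically and holomorphically as $\C^k\times\prod_i N_i$ with irreducible factors. Each irreducible factor either is compact, being a Hermitian symmetric space or having $\HBC\ge0$ in the Mok sense, or is noncompact. I would use the result (presumably proved earlier for part (2)) that the zero section $M\subset N$ is totally geodesic, or at least that its lift $\widetilde M\times\{0\}$ is a complex submanifold whose tangent directions are parallel.

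The key step is to show that the holomorphic distribution $T\widetilde M$ (tangent to the slices $\widetilde M\times\{v\}$) and the vertical distribution $T\C^r$ are both $\widetilde g_N$-parallel. Then de Rham gives $\widetilde g_N=\widetilde g_M\oplus g_0$ up to a holomorphic change of product coordinates. My first attempt would be as follows. The compact directions of $M$ and the fiber directions must separate, because the fibers $\C^r$ are noncompact and contain no compact curves. So one should show that every de Rham factor is either contained in the $M$-directions or transverse to them. For a Kähler metric with $\HBC\ge0$, the function $v\mapsto$ (volume or metric of the slice $\widetilde M\times\{v\}$) gives a plurisubharmonic-type quantity. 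Pushing forward to $\C^r$ produces a bounded plurisubharmonic function on the fiber directions, and a Liouville argument makes it constant. A second-variation computation with $\HBC\ge0$ then forces the slices to be totally geodesic and their normal bundle flat. I expect this step, showing the splitting is compatible with the given holomorphic product structure, to be the main obstacle.

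Once parallelism holds, $\widetilde N=\widetilde M\times F$ isometrically, where $F\cong\C^r$ carries a complete metric $g_0$ with $\HBC\ge0$, and the restriction to $\widetilde M\times\{0\}$ is $\widetilde g_M$. The de Rham product structure may differ from the original $\widetilde M\times\C^r$ by a $\Gamma_M$-equivariant biholomorphism. This map fixes the zero section, and because the deck group preserves the de Rham decomposition it descends to an automorphism $f_N$ of $N$ with $f_N|_M=\Id_M$. The $\Gamma_M$-action preserves $\widetilde g_M\oplus g_0$ and respects the product, so it acts on $F$ by isometries of $g_0$; this is the claimed invariance. For the converse, a product of complete $\HBC\ge0$ Kähler metrics is complete with $\HBC\ge0$. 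Since $\Gamma_M$ acts freely and isometrically, the metric descends to $N$, and local isometry preserves both completeness and the curvature sign.
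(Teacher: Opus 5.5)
\textbf{There is a genuine gap in the forward direction}, and your ``key step'' as formulated is false. (Your converse is fine.)

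\textbf{Why the key step fails.} You aim to show that both the horizontal slices $\widetilde M\times\{v\}$ and the vertical distribution of the flat trivialization $\widetilde N\cong\widetilde M\times\C^r$ are $\widetilde g_N$-parallel. Only the first holds in general. Take $M=\C/\Lambda$, $E$ trivial, and $\Psi(x,w)=(x+w^2,w)$ on $N=M\times\C$. This automorphism fixes $M$ pointwise, and $\Psi^*(|dx|^2+|dw|^2)=|dx+2w\,dw|^2+|dw|^2$ is a complete flat K\"ahler metric. In it the slices $M\times\{w\}$ are still totally geodesic. But $\C\,\partial_w$ is not parallel, since $d\Psi(\partial_w)=\partial_w+2w\,\partial_x$ is not a constant direction in the flat model.

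This is exactly why the theorem needs $f_N$. The second de Rham factor is not the given $\C^r$. One must construct a $\pi_1(M)$-equivariant biholomorphism from it onto $\C^r$ sending the base point to $0$. That map produces both $f_N$ and the $\rho(\pi_1(M))$-invariance of $g_0$, but you only assert its existence (``may differ by a $\Gamma_M$-equivariant biholomorphism'').

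\textbf{The sketched mechanism does not work either.} The slices $\widetilde M\times\{v\}$ are noncompact and need not close up in $N$, because the orbit $\rho(\pi_1 M)v$ may be infinite. When they do close up, their volume is fixed by their homology class (Wirtinger) and carries no information. No second-variation formula is given that would force them to be totally geodesic.

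\textbf{The missing idea: a Liouville argument on $F$, not on the fiber.} From Section 3 you already have the de Rham splitting $\widetilde N=F\times Q$ with $\widetilde M=F\times\{q_0\}$. Deck transformations have the form $\gamma=(\alpha_\gamma,\beta_\gamma)$ with $\beta_\gamma(q_0)=q_0$.

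\begin{enumerate}
\item Let $\Phi_1:\widetilde N\to F\times Q$ and $\Phi_2:\widetilde N\to F\times\C^r$ be the de Rham and flat identifications. Let $\mathcal Z$ be the $\C^r$-component of $\Phi_2\circ\Phi_1^{-1}$, so $\mathcal Z(\alpha_\gamma x,\beta_\gamma q)=\rho(\gamma)\mathcal Z(x,q)$.
\item Since $\rho$ is unitary, $|\mathcal Z|$ is $\pi_1(M)$-invariant. Moreover $(F\times\overline{B_Q(q_0,R)})/\pi_1(M)$ is compact, because it fibers over $M$ with compact fiber.
\item Hence for fixed $q$, $x\mapsto\mathcal Z(x,q)$ is a bounded holomorphic map on $F$. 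Since $F$ is complete with $\Ric\ge0$, Yau's Liouville theorem makes it constant.
\item Thus $\mathcal Z=z\circ\operatorname{pr}_Q$, where $z:Q\to\C^r$ is a biholomorphism: its fibers are discrete, while $\mathcal Z^{-1}(w)\cong F$ is connected. It satisfies $z\circ\beta_\gamma=\rho(\gamma)\circ z$ and $z(q_0)=0$.
\item Then $g_0=(z^{-1})^*g_Q$, and $f_N$ is induced by $\bigl((\Id_F\times z)\circ\Phi_1\bigr)^{-1}\circ\Phi_2$.
\end{enumerate}

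The Liouville argument is applied on the base directions $F$ to the fiber coordinate, the opposite of what you proposed. This is what identifies the flat horizontal slices with the de Rham slices $F\times\{q\}$, while leaving the vertical direction free.
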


\subsection{Organization of this paper}
In Section 2, we  use submanifold curvature formulas to establish the necessary curvature conditions for $\HBC \leq 0$ and $\HBC <0$ in Theorem~\ref{Theo34nece}, and rule out $\HBC >0$ if $\dim_{\C} M>0$ by discussing the plurisubharmonic function on $M$ in Theorem~\ref{Theo1nece}.

In Section 3, we study the case $\HBC \geq 0$ in Theorem~\ref{T2}. If $N$ has $\mathrm{cHBC}_{\ge 0}$, then Ni–Tam's result gives a smooth plurisubharmonic function $v$ on $N$ with parallel null distribution $\mathcal D=\ker(\partial\overline\partial v)$. Working on the universal cover, we show that 
\begin{equation} 
M = (F \times \{\operatorname{pt} \})/\Gamma_N, \text{ where } T^{1,0}F=\widetilde{\mathcal D}.
\end{equation}
We then obtain the unitary flatness condition $E\simeq N_{M/N}\simeq(F\times\mathbb C^r)/\Gamma_N$.

In Section 4, we study de Rham splitting to obtain the global metric normal form for $\HBC \geq 0$.

In Section 5, based on $\HBC_M <0$ and $E <_{\Gr}0$, we construct complete Kähler metrics of Calabi type on $N$:
\begin{equation} \omega_u=\pi^*\omega_M+\sqrt{-1}\,\partial\overline\partial (s+bs^2+As^k)\end{equation}
and show it has $\HBC <0$ on $N$ (Theorem~\ref{Rgu}).

In Section 6, we construct $S=\mathbb B^2 /\Lambda$ and a line bundle $L_H=\mathcal O_H(-3)\otimes \pi_S^*K_S^{-2}$ over $H=\mathbb P(T^{*1,0}S)$. Applying Mohsen's theorem, we construct a compact Kähler manifold \(M\) with $\HBC <0$, together with a Griffiths-seminegative line bundle \(L\to M\).
Then we prove that \(N=\operatorname{Tot}(L\to M )\) admits no Kähler metric with \(\HBC\le0\). Finally, $(M,L^{\oplus r})$ provides counterexamples for higher rank.

\section{Elementary Curvature Obstructions}\label{ele}

\subsection{Strictly Negative and Nonpositive Cases}
In this section, we explain how the sign of curvature on $N$ forces the sign of curvature on $M$ and $E$ when $\HBC <0\ (\leq 0)$.

Denote two natural maps 
$\iota\colon M\hookrightarrow N$ and $\pi:N \to M$.

\begin{proposition}\label{subcurvature}
Suppose $E$ is equipped with the induced metric under the natural holomorphic identification
$E\simeq \ker(d\pi)|_M\subset \iota^*T^{1,0}N $. For \(X,Y\in T_p^{1,0}M,\ v\in E_p\), the following curvature identities hold.
\begin{enumerate}[label=(\roman*)]
    \item  Chern curvature formula for holomorphic subbundles \cite[Chapter~I, \S1.6]{Kobayashi1987}:
    \begin{equation}
    \langle \Theta^E(X,\overline X)v,v\rangle=\langle R^{\iota^*T^{1,0}N}(X,\overline X)v,v\rangle-\|\Pi^E(X,v)\|^2.
    \end{equation}

    \item  Gauss equation for K\"ahler submanifolds \cite[\S4, p.~227]{Goldberg1967}:
    \begin{equation}
    R^M(X,\overline X,Y,\overline Y)=R^N(d \iota X,\overline{d\iota X},d\iota Y,\overline{d\iota Y})-\|\Pi^M(X,Y)\|^2.
    \end{equation}
\end{enumerate}
\end{proposition}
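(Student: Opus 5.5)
Both identities are instances of one principle: the second fundamental form of a holomorphic subbundle of a Hermitian holomorphic bundle decreases curvature. Given that, I would apply it twice to the restricted bundle $\iota^*T^{1,0}N$ over $M$, which carries the pulled-back Kähler metric $g_N|_M$ and whose Chern connection is the pullback of the Levi-Civita (equivalently Chern) connection of $g_N$.

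\textbf{Setup.} I will check that both $T^{1,0}M$ (via $d\iota$) and $E\simeq\ker(d\pi)|_M$ are holomorphic subbundles of $\iota^*T^{1,0}N$. This is clear since $\iota$ and $\pi$ are holomorphic, so $d\iota$ is injective holomorphic and $\ker d\pi$ is a holomorphic subbundle of constant rank $r$. The identification $E\simeq\ker(d\pi)|_M$ is the canonical one: $v\in E_p\mapsto \frac{d}{dt}(p,tv)|_{t=0}$, which is holomorphic.

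\textbf{Subbundle curvature formula.} For a holomorphic subbundle $S\subset (V,h)$ with orthogonal projection $P$, the induced Chern connection is $\nabla^S=P\circ\nabla^V|_S$. The second fundamental form $\Pi(X,s)=(I-P)\nabla^V_X s$ is of type $(1,0)$ in $X$, because $\nabla^{0,1}=\bar\partial$ preserves holomorphic sections of $S$. Differentiating $\langle \nabla^V s,s\rangle$ for a holomorphic section $s$ gives
\[
\langle\Theta^S(X,\overline X)s,s\rangle=\langle\Theta^V(X,\overline X)s,s\rangle-\|\Pi(X,s)\|^2 .
\]
The argument is to write $\nabla^V_X s=\nabla^S_Xs+\Pi(X,s)$ and use $\nabla^{V}_{\overline X}(\Pi(X,s))$ paired with $s$, which equals $-\|\Pi\|^2$ by metric compatibility and orthogonality. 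This is the standard computation in \cite[Ch.~I, \S1.6]{Kobayashi1987}, and I would cite it rather than redo it.

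\textbf{(i)} Take $S=E$ and $V=\iota^*T^{1,0}N$. The curvature of the pullback bundle is the pullback of the curvature, so $\Theta^V(X,\overline X)=R^{\iota^*T^{1,0}N}(X,\overline X)$, and (i) follows with $\Pi^E(X,v)=(I-P_E)\nabla_Xv$.

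\textbf{(ii)} Take $S=T^{1,0}M$. The Chern connection of $T^{1,0}M$ with the induced metric is the Levi-Civita connection of $g_M=\iota^*g_N$, since $g_M$ is Kähler as the restriction of a Kähler form to a complex submanifold. The formula then gives
\[
R^M(X,\overline X,Y,\overline Y)=R^N(d\iota X,\overline{d\iota X},d\iota Y,\overline{d\iota Y})-\|\Pi^M(X,Y)\|^2,
\]
with $\Pi^M(X,Y)=(I-P_{TM})\nabla^N_XY$.

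The only point needing care, which I expect to be the main obstacle, is the sign and index convention. I must fix $R(X,\overline X,Y,\overline Y)=\langle \Theta(X,\overline X)Y,Y\rangle$ consistently with the Gauss equation as in \cite{Goldberg1967}. I must also confirm that $\Pi$ is symmetric and of type $(1,0)$, so that no $(0,1)$ correction term appears. This type property holds because the submanifold is complex, so $\nabla^N_{\overline X}Y$ is tangent for holomorphic $Y$.
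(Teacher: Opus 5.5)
Your proposal is correct and takes the same approach as the paper, which gives no argument beyond citing the holomorphic-subbundle curvature formula of Kobayashi for (i) and the K\"ahler Gauss equation of Goldberg for (ii). The one thing you add is the valid observation that (ii) is itself the subbundle formula applied to $T^{1,0}M\subset\iota^*T^{1,0}N$, because the Chern connection of the induced K\"ahler metric on $M$ is its Levi-Civita connection; your sign bookkeeping under the convention $R(X,\overline X,Y,\overline Y)=\langle\Theta(X,\overline X)Y,Y\rangle$ checks out.
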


By Proposition~\ref{subcurvature}, we immediately obtain the following consequence.

\begin{theorem}[Related to Theorem~\ref{main1}(3) and (4)]\label{Theo34nece}
If \(N\) has \(\cHBC_{<0}\) \((\cHBC_{\leq 0})\),
then \(M\) has \(\cHBC_{<0}\) \((\cHBC_{\leq 0})\)
and \(E<_{\Gr} 0 \) 
$(E \leq _{\Gr} 0)$.
\end{theorem}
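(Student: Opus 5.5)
Let $g_N$ be a complete Kähler metric on $N$ with $\HBC<0$ (resp.\ $\le 0$). We restrict it to the zero section $\iota\colon M\hookrightarrow N$ and read off the signs from Proposition~\ref{subcurvature}.

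\emph{The base.} Set $g_M:=\iota^*g_N$. This is a Kähler metric on $M$, since $M$ is a complex submanifold. It is complete because $M$ is compact, so no completeness argument is needed. For $X,Y\in T^{1,0}_pM$, part (ii) of Proposition~\ref{subcurvature} gives
\[
R^M(X,\overline X,Y,\overline Y)=R^N(d\iota X,\overline{d\iota X},d\iota Y,\overline{d\iota Y})-\|\Pi^M(X,Y)\|^2\le R^N(d\iota X,\overline{d\iota X},d\iota Y,\overline{d\iota Y}).
\]
Since $d\iota$ is injective, nonzero $X,Y$ map to nonzero vectors. The right-hand side is therefore $<0$ (resp.\ $\le 0$), so $g_M$ has $\HBC<0$ (resp.\ $\le0$). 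If $\dim M=0$, the base statement is vacuous.

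\emph{The bundle.} Give $E$ the Hermitian metric induced by $g_N$ through the holomorphic identification $E\simeq\ker(d\pi)|_M\subset\iota^*T^{1,0}N$. This identification is holomorphic because the vertical tangent bundle along the zero section is canonically $E$. The Chern curvature of the pulled-back bundle $\iota^*T^{1,0}N$ is the pullback of the Chern curvature of $T^{1,0}N$, so
\[
\langle R^{\iota^*T^{1,0}N}(X,\overline X)v,v\rangle=R^N(d\iota X,\overline{d\iota X},v,\overline v).
\]
This is a bisectional curvature of $g_N$. Part (i) of Proposition~\ref{subcurvature} then gives
\[
\langle\Theta^E(X,\overline X)v,v\rangle\le R^N(d\iota X,\overline{d\iota X},v,\overline v),
\]
which is $<0$ (resp.\ $\le0$) for all nonzero $X\in T^{1,0}_pM$ and nonzero $v\in E_p$. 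This is exactly Griffiths negativity (resp.\ seminegativity) of $E$ with the induced metric.

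\emph{Main point to check.} The only real verification is the identification of the ambient term in (i) with a bisectional curvature of $N$, i.e.\ that the curvature of the pulled-back holomorphic bundle $\iota^*T^{1,0}N$ is the pulled-back curvature. Strictness also needs the second fundamental form terms to enter with a nonpositive sign, which is how they appear in both formulas. Everything else is formal.

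In the degenerate case $\dim M=0$, Griffiths negativity is vacuous (there are no nonzero $X$), and $M$ is trivially $\cHBC_{<0}$, so the statement holds trivially there as well.
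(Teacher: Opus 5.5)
Your proposal is correct and is exactly the paper's argument: restrict $g_N$ to the zero section and apply the Gauss equation and the subbundle curvature formula of Proposition~\ref{subcurvature}, both of which subtract a nonnegative second-fundamental-form term. You also spell out two points the paper leaves implicit: the ambient term in (i) is a bisectional curvature of $g_N$ because the Chern curvature of $\iota^*T^{1,0}N$ is the pullback of that of $T^{1,0}N$, and compactness of $M$ makes the induced metric complete.
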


\subsection{Strictly Positive Case} The formulas in Proposition~\ref{subcurvature} do not directly yield analogous
conclusions in the positive case, since the subtraction
of the term $\|\Pi^E(X,v)\|^2$ does not preserve curvature nonnegativity.

\begin{theorem}[Related to Theorem~\ref{main1}(1)]\label{Theo1nece}
If $N$ has $\cHBC_{> 0}$, then $M=\{ \operatorname{pt} \}$.
\end{theorem}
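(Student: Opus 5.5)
Suppose $N$ carries a complete Kähler metric $g_N$ with $\HBC>0$ and $\dim_{\C} M=m\ge 1$. The plan is to produce a strictly plurisubharmonic exhaustion-type function on $N$, restrict it to the compact submanifold $M$, and get a contradiction from the maximum principle. This is the approach the organization section announces: "discussing the plurisubharmonic function on $M$".

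\textbf{Step 1 (Stein-type function).} Since $g_N$ is complete with $\HBC>0$ and $N$ is noncompact ($r\ge1$), we invoke the known structure results for this setting. Greene--Wu / Siu--Yau type comparison arguments, and the Ni--Tam theory already cited for Section~3, give that $N$ is Stein. Concretely, they give a smooth strictly plurisubharmonic function $v$ on $N$. In the Ni--Tam framework one obtains a plurisubharmonic $v$ whose null distribution $\ker(\partial\overline\partial v)$ is parallel. Under strictly positive bisectional curvature a parallel holomorphic distribution must be trivial: if $\mathcal D$ is a nonzero parallel distribution with $\mathcal D\neq T^{1,0}N$, take $X\in\mathcal D$ and $Y\perp\mathcal D$. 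The de Rham-type product splitting forces $R(X,\overline X,Y,\overline Y)=0$, contradicting $\HBC>0$. Hence either $\mathcal D=0$, and then $v$ is strictly plurisubharmonic, or $\mathcal D=T^{1,0}N$.

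\textbf{Step 2 (excluding the degenerate case).} The case $\mathcal D=T^{1,0}N$ means $v$ is pluriharmonic, i.e.\ Ni--Tam produce no nontrivial function. We rule this out using the volume growth and positivity. Under $\HBC>0$ the Ricci curvature is positive, and with bisectional positivity the Ni--Tam solution of the Poisson equation $\Delta v=\mathcal S$ (scalar curvature) is nonconstant. Then $\sqrt{-1}\partial\overline\partial v$ is nonzero, so $\mathcal D\ne T^{1,0}N$. This is the step I expect to be the main obstacle. One must verify the decay hypotheses for the Poisson equation, which hold under $\HBC\ge0$ only when the scalar-curvature average decays suitably. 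If they fail, I would instead use the alternative: a complete Kähler manifold with $\HBC>0$ and non-maximal decay behaves like the case handled by Ni--Tam's dichotomy, and their splitting conclusion again contradicts strict positivity.

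\textbf{Step 3 (restriction to $M$).} Given a strictly plurisubharmonic $v$ on $N$, the restriction $v\circ\iota$ is a strictly plurisubharmonic smooth function on the compact complex manifold $M$ of positive dimension. It attains an interior maximum at some $p\in M$. There its complex Hessian is negative semidefinite, contradicting $\sqrt{-1}\partial\overline\partial(v|_M)>0$. Equivalently, a compact positive-dimensional complex submanifold cannot lie in a Stein manifold. Hence $M$ is a point.
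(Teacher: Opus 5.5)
Your overall argument is the paper's. The paper cites Theorem~4.2(ii) of Ni--Tam for a smooth strictly plurisubharmonic function on $N$, then applies the maximum principle to its restriction to the compact submanifold $M$, exactly as in your Step~3. (Minor point: the Greene--Wu Stein theorem is for positive sectional curvature, so the Ni--Tam result is what does the work here.)

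Where you try to derive strict plurisubharmonicity yourself from the parallel null distribution, your Step~1 dichotomy $\mathcal D\in\{0,T^{1,0}N\}$ is correct. Step~2, however, is not an argument as written. Solving $\Delta v=\mathcal S$ is a different construction, and its solvability needs scalar-curvature decay that you cannot guarantee. Your fallback is not concrete either.

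The degenerate case can be closed directly from Theorem~\ref{NT} as stated in the paper:
\begin{itemize}
\item If $\mathcal D=T^{1,0}N$, then $v$ is pluriharmonic, hence harmonic.
\item Since $|\nabla v|\le1$, $v$ has linear growth, so $\nabla v$ is parallel by Theorem~\ref{NT}(3).
\item The $(1,0)$-part $W$ of $\nabla v$ is then a parallel section of $T^{1,0}N$, so $R(W,\overline W,W,\overline W)=0$, and $\HBC>0$ forces $W=0$.
\item Then $v$ is constant. This contradicts the ray growth in Theorem~\ref{NT}(2); rays from $o$ exist because $N$ is complete and noncompact.
\end{itemize}

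You can also skip the case $\mathcal D=0$ entirely. As in Section~3, $v|_M$ is constant, so $T^{1,0}M\subset\mathcal D|_M$, and hence $\mathcal D\neq0$ whenever $\dim_{\C}M\ge1$. Only the degenerate case then needs excluding, and the argument above does that.
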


\begin{proof}
By \cite[Theorem~4.2(ii)]{Ni2003},
$N$ admits a smooth strictly plurisubharmonic
function.
Its restriction to the compact complex
submanifold $M$ would also be strictly
plurisubharmonic, contradicting the
maximum principle unless $\dim_{\mathbb C}M=0$.
Hence $M$ is a point.
\end{proof}

\section{Vector Bundles in the Nonnegative Case}

We recall the following
consequences of heat deformation of the Busemann function due to Ni and Tam
\cite[Lemma~4.1, Theorem~4.1]{Ni2003}.

\begin{theorem}\label{NT}
Let $(N,g)$ be a connected complete noncompact K\"ahler manifold. Fix $o \in N$.
Suppose $\operatorname{HBC}_g\geq0$ on $N$. There exists a smooth
plurisubharmonic function $v$ on $N$ with $|\nabla v|\leq1$ such that:
\begin{enumerate}
\item The null spaces
\begin{equation}
\mathcal D_x=\ker(\partial\overline\partial v)_x
\subset T_x^{1,0}N
\end{equation}
form a parallel complex distribution.
\item For every unit-speed geodesic ray $\sigma$ from $o$, there
are constants $c_\sigma>0$ and $C_\sigma\in\mathbb R$ such that
\begin{equation}\label{eq:raygrowth}
v(\sigma(t))\geq c_\sigma t-C_\sigma\qquad(t\geq0).
\end{equation}

\item If a harmonic function $f$ on $N$ is of at most
linear growth, then $\nabla f$ is parallel.
\end{enumerate}
\end{theorem}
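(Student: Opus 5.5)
The statement is a recollection of Ni--Tam's construction, so the plan follows their heat-flow deformation of the Busemann function, as in \cite[Lemma~4.1, Theorem~4.1]{Ni2003}.

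First I build a Busemann-type exhaustion. For each unit-speed ray $\sigma$ from $o$, the Busemann function $b_\sigma(x)=\lim_{t\to\infty}(t-d(x,\sigma(t)))$ is $1$-Lipschitz. Since $\HBC\ge0$ implies $\Ric\ge0$, Laplacian comparison shows that $b_\sigma$ is subharmonic in the barrier sense. Following Wu and Ni--Tam, I take $\beta(x)=\sup_\sigma b_\sigma(x)$, or an averaged version of it. This function is Lipschitz with constant $1$ and grows at least linearly along rays: $\beta(\sigma(t))\ge t$. Using $\HBC\ge0$, the Hessian comparison for distance functions shows that $\beta$ is plurisubharmonic in the viscosity sense.

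Next I smooth by heat flow. Let $v(x,t)=\int H(x,y,t)\beta(y)\,dy$, which exists by the Li--Yau heat-kernel bounds under $\Ric\ge0$. The gradient bound $|\nabla v|\le1$ is preserved by Bochner's formula, since $\Ric\ge0$. The key tool is the Ni--Tam parabolic maximum principle for the tensor $\partial\bar\partial v$. It evolves by the Lichnerowicz heat equation $(\partial_t-\Delta)v_{\alpha\bar\beta}=R_{\alpha\bar\beta\gamma\bar\delta}v_{\delta\bar\gamma}-\tfrac12(R_{\alpha\bar p}v_{p\bar\beta}+v_{\alpha\bar p}R_{p\bar\beta})$. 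When $\HBC\ge0$, this preserves $\partial\bar\partial v\ge0$, provided one has growth control on noncompact $N$, which comes from the Lipschitz bound. The strong maximum principle of Hamilton type then shows that for $t>0$ the kernel of $\partial\bar\partial v(\cdot,t)$ is a distribution invariant under parallel transport, which gives (1). The linear growth in (2) survives because heat-kernel averaging moves values by at most $C\sqrt t$: $v(x,t)\ge\beta(x)-C\sqrt t$. I then fix $t=1$.

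For (3), I apply Bochner's formula to $|\nabla f|^2$ for a harmonic $f$ of linear growth. Since $\Delta|\nabla f|^2=2|\nabla^2f|^2+2\Ric(\nabla f,\nabla f)\ge0$, the function $|\nabla f|^2$ is subharmonic. By Cheng--Yau it is bounded, and then the mean value inequality together with integration by parts on large balls forces $\int_{B_R}|\nabla^2 f|^2=o(1)$. A cleaner route is the Ni--Tam argument: $\sqrt{-1}\partial\bar\partial f^2$ flows by the same tensor equation, which gives $\nabla\nabla f=0$.

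The main obstacle is the maximum principle for the tensor on a noncompact manifold together with the parallelism of the null space. For this I would cite \cite{Ni2003} directly rather than redo it.
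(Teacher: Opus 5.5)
Your treatment of (1) and (2) is the Ni--Tam heat-flow deformation of the Busemann function $\beta=\sup_\sigma b_\sigma$. That is exactly what the paper relies on: it gives no argument beyond quoting \cite[Lemma~4.1, Theorem~4.1]{Ni2003}. Deferring the noncompact tensor maximum principle and the parallelism of the null space to them is therefore consistent with the paper.

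The genuine problem is your own argument for (3). Integrating $\Delta|\nabla f|^2\ge 2|\nabla^2 f|^2$ against a cutoff gives only
\[
\int_{B_R}|\nabla^2 f|^2\le \frac{C}{R^2}\int_{B_{2R}}|\nabla f|^2\le \frac{C}{R^2}\operatorname{Vol}(B_{2R}).
\]
This does not tend to zero in general; for Euclidean volume growth it is of order $R^{2n-2}$, where $2n=\dim_{\mathbb R}N$. Knowing that $|\nabla f|^2$ is a bounded subharmonic function gives no rigidity on a nonparabolic manifold. This route also uses only $\Ric\ge0$, so $\HBC\ge0$ never enters, although it is exactly what makes (3) true.

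The ``cleaner route'' does not work as stated either. $f^2$ is not harmonic, and if you flow it, $\sqrt{-1}\partial\bar\partial f^2=2\sqrt{-1}(\partial f\wedge\bar\partial f+f\,\partial\bar\partial f)$ has no sign unless $f$ is already pluriharmonic. So there is no nonnegative initial tensor for the maximum principle to preserve. Even granting that, you do not say how $\nabla\nabla f=0$ would follow.

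One way to repair it is to run your mean-value idea on the right quantity, in two steps.
\begin{enumerate}
\item Pluriharmonicity. On a Kähler manifold $\Delta_d$ commutes with $\partial\bar\partial$, so $\phi=\sqrt{-1}\partial\bar\partial f$ is a $\Delta_d$-harmonic real $(1,1)$-form. Bochner's Weitzenböck term for such forms is $\sum_{i,j}R_{i\bar i j\bar j}(\lambda_i-\lambda_j)^2$ in a frame diagonalizing $\phi$, and it is nonnegative precisely because $\HBC\ge0$. Hence $|\phi|^2$ is subharmonic. The mean-value inequality, your $L^2$ Hessian bound, the Cheng--Yau gradient bound and volume doubling then give $|\phi|^2(x)\le CR^{-2}$ for every $R$. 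So $f$ is pluriharmonic.
\item Parallel gradient. Now $\theta=\partial f$ is a bounded holomorphic $1$-form. The identity $\partial_X\partial_{\bar X}|\theta|^2=|\nabla_X\theta|^2+R(X,\bar X,V,\bar V)\ge|\nabla_X\theta|^2$, with $V$ the $(1,0)$-vector dual to $\theta$, shows that $|\theta|^2$ is a bounded plurisubharmonic function. Concluding that it is constant, whence $\nabla\theta=0$ and $\nabla f$ is parallel, requires a Liouville theorem for bounded plurisubharmonic functions under $\HBC\ge0$. That is again Ni--Tam machinery.
\end{enumerate}
So for (3), as for (1), the honest course is to cite \cite{Ni2003}, as the paper does.
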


We now prove
\begin{theorem}[Related to Theorem~\ref{main1}(2)]\label{T2}
$N$ has $\cHBC_{\geq 0}$ if and only if $M$  has $\cHBC_{\geq 0}$ and $E$ is unitary flat.
\end{theorem}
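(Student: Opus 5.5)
The sufficiency direction is a construction and comes first. Suppose $g_M$ is a complete Kähler metric on $M$ with $\HBC\ge 0$ (complete automatically since $M$ is compact), and $E$ is unitary flat. Then $E$ is given by a representation $\rho:\Gamma_M\to U(r)$, and
\begin{equation}
N\cong(\widetilde M\times\C^r)/_\rho\Gamma_M .
\end{equation}
On $\widetilde M\times\C^r$ I take the product metric $\widetilde g_M\oplus g_{\mathrm{eucl}}$. This metric is complete and Kähler with $\HBC\ge0$, because the bisectional curvature of a product is the sum of the curvatures of the factor components. The deck group $\Gamma_M$ acts by isometries of $\widetilde g_M$ and unitarily on the $\C^r$ factor, so it preserves the product metric. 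The metric therefore descends to $N$; completeness is preserved since the quotient is by a properly discontinuous isometric action.

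For necessity, let $g_N$ be complete Kähler on $N$ with $\HBC\ge0$; if $\dim M=0$ the statement is trivial, so assume $\dim M>0$.

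\textbf{The base.} The restriction $g_M=\iota^*g_N$ is Kähler on compact $M$, hence complete. The Gauss formula in Proposition~\ref{subcurvature}(ii) subtracts $\|\Pi^M\|^2$, so it does not yield $\HBC_M\ge0$ directly. I would first show that $M$ is totally geodesic by the Ni--Tam argument below, and then $\Pi^M=0$ gives $\HBC_{g_M}\ge0$.

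\textbf{The Ni--Tam argument.} Apply Theorem~\ref{NT} to obtain a plurisubharmonic $v$ with parallel null distribution $\mathcal D$.

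\begin{enumerate}
\item Since $M$ is compact and $v|_M$ is plurisubharmonic, $v|_M$ is constant. Then $\partial\overline\partial(v|_M)=0$, so $\partial\overline\partial v(X,\overline X)=0$ for $X\in T^{1,0}M$. Because $\partial\overline\partial v\ge0$, this forces $T^{1,0}M\subset\mathcal D|_M$.
\item Along the fibres, $v$ grows linearly. Fix a fibre point and use the rays $t\mapsto t\xi$ in $E_p$ (non-geodesic), comparing with geodesic rays through \eqref{eq:raygrowth} and $|\nabla v|\le1$. This should give $\mathcal D\cap T^{1,0}E_p\ne T^{1,0}E_p$ in a suitable generic sense, though nondegeneracy in every fibre direction is subtler. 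Ideally one shows $\mathcal D|_M=T^{1,0}M$ exactly.
\item A cleaner route to the rank count is the following. On the universal cover $\widetilde N$, de Rham decomposition for the parallel distribution $\mathcal D$ and its orthogonal complement $\mathcal D^\perp$ gives a holomorphic isometric splitting $\widetilde N=F\times G$ with $T^{1,0}F=\widetilde{\mathcal D}$. Since $v$ is constant along $F$-leaves, and $F$-leaves through points of $\widetilde M$ contain $\widetilde M$, compactness of $M$ should identify $\widetilde M$ with a leaf $F\times\{\mathrm{pt}\}$. Here $v$ restricted to $G$ is strictly plurisubharmonic with an exhaustion-like growth, and $\widetilde N$ deformation retracts to $\widetilde M$. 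Together these should force $\dim G=r$ and $G\cong\C^r$ (contractible Stein).
\end{enumerate}

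Once $\widetilde M=F\times\{\mathrm{pt}\}$ is a leaf of a parallel foliation, it is totally geodesic, so $\Pi^M=0$. The normal bundle $N_{M/N}\cong E$ is then identified with the parallel distribution $\mathcal D^\perp|_M$. Its induced metric is preserved by the Levi-Civita connection, and by Proposition~\ref{subcurvature}(i) its curvature is the restriction of $R^N$ to mixed directions, which vanishes for a Riemannian product. So $E$ is Hermitian flat, i.e.\ unitary flat. Holomorphically, the $\Gamma_N$-action preserves the product structure, and the holonomy acts on $T\{ \mathrm{pt}\}\times G$ directions unitarily.

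\textbf{Main obstacle.} The hardest step is proving $\widetilde{\mathcal D}|_{\widetilde M}=T^{1,0}\widetilde M$ exactly, i.e.\ that $\mathcal D$ contains no fibre directions, so that $\widetilde M$ is a whole leaf rather than a proper submanifold of a leaf. I would attack this through the linear growth \eqref{eq:raygrowth} along rays escaping in fibre directions. If $\mathcal D$ contained a fibre direction, the leaf through $M$ would be noncompact. I would then construct a ray inside that leaf, on which $v$ is constant, contradicting $v(\sigma(t))\ge c_\sigma t-C_\sigma$ with $c_\sigma>0$.
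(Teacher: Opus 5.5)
Your sufficiency argument and the skeleton of necessity agree with the paper: the Ni--Tam function $v$, the inclusion $T^{1,0}M\subset\mathcal D|_M$, the de Rham splitting $\widetilde N=F\times Q$ with $\widetilde M\subset F\times\{q_0\}$, and then total geodesy and flatness of the normal bundle. The gap is the step you flag as the main obstacle, and the attack you sketch for it fails as stated.

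You assume $v$ is constant along the leaf $P=(F\times\{q_0\})/\Gamma_N$. A priori $v|_P$ is only pluriharmonic, because $T^{1,0}P=\mathcal D|_P$. A pluriharmonic function that is constant on $M$ can still grow linearly on $P$ (think of $\operatorname{Re} w$ on $M\times\C$). Rays along which it grows are compatible with \eqref{eq:raygrowth}, so no contradiction arises. Two ingredients are missing:
\begin{enumerate}
\item $P$ is totally geodesic, hence complete with $\HBC\ge0$. If $P$ were noncompact, Theorem~\ref{NT}(3) applied on $P$ itself to the harmonic function $f=v|_P$, which grows at most linearly, makes $\nabla^Pf$ parallel. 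The geodesic from $o$ in direction $-\nabla^Pf$ (or any ray, if $\nabla^Pf=0$) is then a ray of $P$ along which $f$ does not increase.
\item This must also be a ray of $N$, which requires $d_N|_{P\times P}=d_P$. The paper gets this because every $\beta_\gamma$ fixes $q_0$. Hence the orthogonal projection $(x,q)\mapsto(x,q_0)$ is $\Gamma_N$-equivariant and descends to a $1$-Lipschitz retraction $N\to P$.
\end{enumerate}
Only with both does \eqref{eq:raygrowth} force $P$ to be compact.

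Compactness of $P$ still does not give $P=M$. Your assertion that a fibre direction in $\mathcal D$ would make the leaf noncompact is exactly what remains to be proved. The paper uses the retraction again: $H_*(P;\R)\to H_*(N;\R)\cong H_*(M;\R)$ is injective. So if $\dim_\C P>\dim_\C M$, the nonzero top class of $P$ would land in $H_{2\dim_\C P}(M;\R)=0$. Equivalently, for $k=\dim_\C P$ one has $\int_P\omega_N^{k}>0$, whereas $[\omega_N]^k=0$ in $H^{2k}(N;\R)\cong H^{2k}(M;\R)$. Hence $M$ is open and closed in $P$, so $P=M$ and $\widetilde M=F\times\{q_0\}$.

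Your item 2 (non-geodesic fibre rays) and the Stein/exhaustion heuristic for the second factor do not replace either argument. Once $\widetilde M=F\times\{q_0\}$ is known, your flatness argument is a valid substitute for the paper's explicit quotient $N_{M/N}=(F\times T_{q_0}Q)/\Gamma_N$ with $\rho_\gamma=d\beta_\gamma|_{q_0}\in U(r)$. That argument uses the parallel bundle $\mathcal D^\perp|_M\cong N_{M/N}\cong E$ and the vanishing of mixed curvature in a product, via Proposition~\ref{subcurvature}(i).
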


Fix $o \in M \subset N$ and take $v$ as in Theorem~\ref{NT}. Set $\mathcal D=\ker \partial \overline{\partial} v$. The strategy of the proof is 
\begin{equation}
T^{1,0}M\subset \mathcal D|_M
\rightsquigarrow
\widetilde M\subset F\times\{q_0\}
\rightsquigarrow
M\subset P:=(F\times\{q_0\})/\Gamma_N
\rightsquigarrow
P=M,
\end{equation}
and then identify the normal bundle \(N_{M/N}\cong E\) as a unitary-flat quotient.

\begin{lemma}
Let $p_N:\widetilde{N}\to N$ be the universal covering and put $\widetilde{M}=p_N^{-1}(M)$ and $\widetilde{\mathcal D}=p_N^* \mathcal D$.
Then  $ \widetilde{M} $  is connected.
\begin{enumerate}
\item There is a holomorphic isometric splitting
\begin{equation} \widetilde{N}=F\times Q \qquad T^{1,0}F=\widetilde{\mathcal D}.\end{equation}
\item There is $q_0\in Q$ such that 
\begin{equation}\widetilde{M}\subset F\times\{q_0\}.\end{equation}

\item Every deck transformation $\gamma \in \Gamma_N:=\pi_1(N)$ has the form
\begin{equation}\label{alphabeta}
\gamma(x,q)
=
\bigl(\alpha_\gamma(x),\beta_\gamma(q)\bigr),
\end{equation}
and $q_0$ is a fixed point of $\beta$, i.e., $\beta_\gamma(q_0)=q_0$.
\end{enumerate}
\end{lemma}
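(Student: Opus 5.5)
Connectedness of $\widetilde M$ comes first. The zero section $\iota\colon M\hookrightarrow N$ is a homotopy equivalence, with homotopy inverse $\pi$, since $N$ deformation retracts onto $M$ by fiberwise scaling. Hence $\iota_*\colon\pi_1(M)\to\pi_1(N)$ is an isomorphism, and in particular surjective. The restriction $p_N^{-1}(M)\to M$ is a covering whose components correspond to the cosets of $\iota_*\pi_1(M)$ in $\pi_1(N)$, so $\widetilde M$ is connected; in fact it is the universal cover of $M$, since $\iota_*$ is injective as well.

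For (1), lift $v$ to $\widetilde v=v\circ p_N$ on the simply connected complete Kähler manifold $\widetilde N$. By Theorem~\ref{NT}(1), $\mathcal D$ is a parallel complex distribution, hence so is $\widetilde{\mathcal D}$, and its orthogonal complement is parallel too. The holomorphic de Rham decomposition theorem then gives a holomorphic isometric splitting $\widetilde N=F\times Q$ with $T^{1,0}F=\widetilde{\mathcal D}$, the factors being the maximal integral manifolds of $\widetilde{\mathcal D}$ and of $\widetilde{\mathcal D}^\perp$.

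For (2), I first show $T^{1,0}M\subset\mathcal D|_M$. The restriction $v|_M$ is plurisubharmonic on the compact connected manifold $M$, so it is constant by the maximum principle, and therefore $\partial\overline\partial(v|_M)=\iota^*\partial\overline\partial v=0$. Since $\partial\overline\partial v\ge 0$ is a positive semidefinite Hermitian form, vanishing on the diagonal $X\otimes\overline X$ for all $X\in T^{1,0}M$ forces $X\in\ker\partial\overline\partial v$ by Cauchy--Schwarz. Thus $T^{1,0}M\subset\mathcal D|_M$, and lifting, $T\widetilde M$ is tangent to the $F$-leaves. The projection $\mathrm{pr}_Q\colon\widetilde M\to Q$ then has zero differential, so by connectedness of $\widetilde M$ it is constant, equal to some $q_0$. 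This gives $\widetilde M\subset F\times\{q_0\}$.

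For (3), each deck transformation $\gamma$ is a holomorphic isometry preserving $\widetilde v$, so it preserves $\ker\partial\overline\partial\widetilde v=\widetilde{\mathcal D}$ and its orthogonal complement. It therefore maps $F$-leaves to $F$-leaves and $Q$-leaves to $Q$-leaves, and an isometry of a product preserving both foliations has the product form $\gamma(x,q)=(\alpha_\gamma(x),\beta_\gamma(q))$. (One could worry about de Rham factors permuting, but preservation of $\widetilde{\mathcal D}$ rules this out.) Finally, $\widetilde M=p_N^{-1}(M)$ is $\Gamma_N$-invariant and nonempty, so $\gamma(\widetilde M)\subset F\times\{q_0\}$ meets $F\times\{\beta_\gamma(q_0)\}$, which forces $\beta_\gamma(q_0)=q_0$.

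The main obstacle is the product decomposition of the deck transformations. It requires checking that $\widetilde{\mathcal D}$, being defined from the $\Gamma_N$-invariant function $\widetilde v$, is genuinely $\gamma$-invariant, and that preserving both complementary parallel foliations on a product yields a global product map, not merely a local one. I would argue this by showing the leaves are the slices $F\times\{q\}$ and $\{x\}\times Q$, so that $\gamma$ induces well-defined maps on each factor.
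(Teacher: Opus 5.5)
Your proposal is correct and follows essentially the same route as the paper: connectedness of $\widetilde M$ from $\pi_1(M)\cong\pi_1(N)$, then $T^{1,0}M\subset\mathcal D|_M$ via constancy of $v|_M$ and Cauchy--Schwarz, then the de Rham splitting, then vanishing of the differential of the $Q$-projection on $\widetilde M$, and finally $\Gamma_N$-invariance of $\widetilde M$ to get $\beta_\gamma(q_0)=q_0$. Your justification that deck transformations preserve $\widetilde{\mathcal D}$ (via $\Gamma_N$-invariance of $\widetilde v=v\circ p_N$) and map slices to slices, hence act as product maps, supplies details the paper only asserts.
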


\begin{proof}
Since \(M\hookrightarrow N\) is a homotopy equivalence, 
$\pi_1(M)\xrightarrow{\sim}\pi_1(N).$
It follows that $\widetilde M=p_N^{-1}(M)$ is connected.
Since $\mathcal D$ and $\mathcal D^{\perp}$ are parallel, we obtain
\begin{equation}
 T^{1,0} \widetilde N=\widetilde{\mathcal D} \oplus \widetilde{\mathcal D}^{\perp}.
\end{equation}

Since \(v\) is plurisubharmonic and \(M\) is compact, \(v|_M\) is constant. Hence \(\iota^*(\sqrt{-1}\partial\bar\partial v)=0\) on $M$.

 Set $\operatorname{Herm}_v:=\sqrt{-1}\,\partial\overline\partial v \geq 0$ on $N$.
By the Cauchy–Schwarz inequality for semipositive Hermitian forms, for $\xi_1 \in T^{1,0}M, \xi_2 \in T^{1,0}N$,
\begin{equation}\label{Hv}
|\operatorname{Herm}_v(d \iota (\xi_1),\overline{\xi_2})|^2 \leq \operatorname{Herm}_v(d \iota (\xi_1),\overline{d \iota (\xi_1)})\operatorname{Herm}_v(\xi_2,\overline{\xi_2})=0.
\end{equation}
Thus $\operatorname{Herm}_v(d \iota (\xi_1),\overline{\xi_2})=0$. Therefore, 
\begin{equation}T^{1,0}M\subset\mathcal D|_M.\end{equation}

The de Rham decomposition theorem \cite{deRham1952} gives a holomorphic isometry
\begin{equation}
 \widetilde N=F\times Q,
 \qquad T^{1,0}F=\widetilde{\mathcal D}.
\end{equation}

Consider the projection $\pi_Q:\widetilde{M}\to Q$. Since $T^{1,0}\widetilde{M}\subset T^{1,0}F$, we have $d(\pi_Q|_{\widetilde{M}})=0$.
Hence, for some $q_0\in Q$,
\begin{equation}\label{q0}
 \widetilde M\subset F\times\{q_0\}.
\end{equation}

Every deck transformation preserves \(T^{1,0}F\) and \(T^{1,0}Q\), hence
\begin{equation}
 \gamma(x,q)=\bigl(\alpha_\gamma(x),\beta_\gamma(q)\bigr).
\end{equation}

Since every deck transformation preserves \(\widetilde M\), it follows from \(\widetilde M\subset F\times\{q_0\}\) that \(\beta_\gamma(q_0)=q_0\) for every \(\gamma\in\Gamma_N\).
\end{proof}

The inclusion $\widetilde{M}\subset F\times\{q_0\}$ need not yet be an equality. We will show that $M=(F\times\{q_0\})/\Gamma_M$.

\begin{lemma}\label{L}
The quotient space 
\begin{equation}
P:=\bigl(F\times\{q_0\}\bigr)/\Gamma_N \subset N
\end{equation}
is a complete complex submanifold containing  $ M $. Moreover, $
P$ is compact.
\end{lemma}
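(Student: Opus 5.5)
The plan has three parts: $P$ is an embedded closed submanifold, $P$ is complete and contains $M$, and $P$ is compact. The first two are formal consequences of the preceding lemma. Compactness is the real content, and the plan below for it has a gap.

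\textbf{Submanifold and completeness.} By the preceding lemma, $\Gamma_N$ acts on $\widetilde N=F\times Q$ by $\gamma(x,q)=(\alpha_\gamma(x),\beta_\gamma(q))$ with $\beta_\gamma(q_0)=q_0$. Hence the slice $F\times\{q_0\}$ is $\Gamma_N$-invariant. It is a closed, totally geodesic complex submanifold of $\widetilde N$, and it is complete because $F$ is a de Rham factor of the complete manifold $\widetilde N$. The action of $\Gamma_N$ on the slice is free and properly discontinuous, being the restriction of the deck action. So $P=(F\times\{q_0\})/\Gamma_N$ is a complete Kähler manifold, and the induced map $P\to N$ is a holomorphic immersion.

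This immersion is injective. If $p_N(x,q_0)=p_N(x',q_0)$, then $(x',q_0)=\gamma(x,q_0)$ for some $\gamma\in\Gamma_N$, so the two points lie in the same orbit inside the slice.

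The image is closed. Indeed $p_N^{-1}(p_N(F\times\{q_0\}))=\Gamma_N\cdot(F\times\{q_0\})=F\times\{q_0\}$, which is closed, and $p_N$ is a covering map. An injective immersion with closed image that is locally a slice is an embedded closed complex submanifold. It contains $M$ because $\widetilde M\subset F\times\{q_0\}$.

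\textbf{Compactness: producing a ray in $P$.} Argue by contradiction and suppose $P$ is noncompact. I first produce a geodesic ray of $N$ lying entirely in $P$.

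Take $y_k\in P$ with $d_N(o,y_k)\to\infty$. A minimizing $N$-geodesic from $o$ to $y_k$ lifts to a minimizing geodesic of $\widetilde N$ from a lift $\tilde o\in F\times\{q_0\}$ to some lift $\tilde y_k$ of $y_k$. Every lift of $y_k$ lies in $F\times\{q_0\}$, because the slice is $\Gamma_N$-invariant.

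In the Riemannian product $F\times Q$, a minimizing geodesic joining two points of $F\times\{q_0\}$ has constant $Q$-component. So these segments stay in the slice, and their images stay in $P$. A subsequence converges to a unit-speed ray $\sigma$ of $N$ from $o$ with $\sigma\subset P$, using that $P$ is closed. Theorem~\ref{NT}(2) then gives $v(\sigma(t))\ge c_\sigma t-C_\sigma$.

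\textbf{Compactness: the contradiction (main obstacle).} I need to show that $h:=v|_P$ cannot grow linearly along $\sigma$.

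Since $T^{1,0}P=\mathcal D|_P$, the Cauchy--Schwarz argument of \eqref{Hv} shows that $\operatorname{Herm}_v(\xi,\cdot)=0$ for every $\xi\in T^{1,0}P$. Writing $\tilde v$ on $F\times Q$, this means $\partial_x\bar\partial_x\tilde v=0$ and $\partial_x\bar\partial_q\tilde v=0$. In particular, $h$ is pluriharmonic on $P$ with $|\nabla h|\le 1$, so $h$ is harmonic of at most linear growth.

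The manifold $P$ is complete and totally geodesic in $N$, so $\HBC_P\ge0$. Theorem~\ref{NT}(3), applied on $P$, then shows that $\nabla h$ is parallel on $P$. Moreover $h$ is constant on the compact submanifold $M\subset P$, so $\nabla h\perp TM$.

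It remains to show $\nabla h=0$; this contradicts the linear growth of $h$ along $\sigma$, and hence $P$ is compact. A parallel, nonvanishing $\nabla h$ alone is not contradictory, since it would just split off a line $\mathbb R$ from $P$. I would therefore look for a second constraint from $v$ on all of $N$.

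The first attempt is to use $\partial_x\bar\partial_q\tilde v=0$. This says the $\mathcal D^*$-component of $\partial\tilde v$ is holomorphic in both $x$ and $q$, hence a bounded holomorphic section of a parallel bundle on $\widetilde N$. Combining this with $\nabla h$ being parallel, I would try to show that $h$ is affine along $P$. Then the level set $\{h=h(o)\}\supset M$ would be invariant under the $J\nabla h$-flow, while the set $\{\nabla h\neq0\}$ carries a free holomorphic $\mathbb C$-action. I would then use that $\nabla h$ is invariant under $\Gamma_N$ and that $P$ retracts onto $M$ through $\pi|_P$. The goal is to force $\nabla h$ to descend to a nonvanishing parallel field transverse to $M$ whose flow contradicts the compactness of the fibres $\pi|_P^{-1}(p)\cap\{h\le c\}$.

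I am least sure of this last step. If it fails, the fallback is to compare $\dim P$ with $\dim M$ directly, via the holomorphic retraction $\pi|_P\colon P\to M$ and the analytic sets $P\cap E_p\subset\C^r$.
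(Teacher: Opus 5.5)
Your first two parts are fine. Your lifting argument for minimizing geodesics is a valid substitute for the paper's $1$-Lipschitz retraction $N\to P$ induced by $(x,q)\mapsto(x,q_0)$. The compactness step, however, has a genuine gap, exactly where you suspected. You fix the ray $\sigma$ first, by a limiting argument, and then try to prove $\nabla h=0$ from further properties of $v$. As you note, a nonzero parallel gradient on $P$ is not contradictory intrinsically. None of the routes you sketch is carried to an argument: holomorphicity of the $\mathcal D^*$-component of $\partial\tilde v$, a free $\mathbb C$-action on $\{\nabla h\neq0\}$, compactness of the fibres of $\pi|_P$, or a dimension count for $P\cap E_p$. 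So as written, compactness of $P$ is not proved.

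The missing idea is that Theorem~\ref{NT}(2) holds for \emph{every} ray from $o$, so you should choose a ray adapted to $h$. First, your lifting argument works for any two points $y,y'\in P$, not only $o$ and $y_k$. Lifting a minimizing $N$-geodesic from a lift of $y$ gives a minimizing geodesic of $F\times Q$ between two points of $F\times\{q_0\}$, hence one lying in the slice. So $d_N|_{P\times P}=d_P$, and every ray of $P$ from $o$ is a ray of $N$.

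Now suppose $a:=|\nabla h|$ is nonzero; it is constant because $\nabla h$ is parallel. Let $\sigma$ be the $P$-geodesic with $\sigma(0)=o$ and $\dot\sigma(0)=-\nabla h(o)/a$.
\begin{itemize}
\item Both $\dot\sigma$ and $\nabla h$ are parallel along $\sigma$, so $\dot\sigma(t)=-\nabla h(\sigma(t))/a$ for all $t$, and hence $h(\sigma(t))=h(o)-at$.
\item Since $h$ is $a$-Lipschitz on $P$, we get $at=|h(\sigma(t))-h(o)|\le a\,d_P(o,\sigma(t))\le at$. So $\sigma$ is a ray of $P$, hence of $N$.
\item Theorem~\ref{NT}(2) then gives $h(\sigma(t))\ge c_\sigma t-C_\sigma$ with $c_\sigma>0$, which is incompatible with $h(\sigma(t))=h(o)-at$.
\end{itemize}
If instead $\nabla h=0$, any ray, for example the one you constructed, gives the contradiction.

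In other words, the line split off by a nonzero parallel $\nabla h$ is exactly what produces a ray along which $v$ decreases. That is the contradiction, and no additional constraint from $v$ on all of $N$ is needed.
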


\begin{proof}
The deck action $\Gamma_N \curvearrowright \widetilde{N}$ is free and properly discontinuous, so is its restriction to the invariant slice \(F\times\{q_0\}\subset \widetilde{N}\).
Consequently,
\begin{equation}
 P=\bigl(F\times\{q_0\}\bigr)/\Gamma_N
\end{equation}
is a closed embedded complete
complex submanifold in $N$.

Since $F\times\{q_0\}$ is totally geodesic in $F\times Q$,
and the covering map is a local isometry,
its quotient $P$ is still totally geodesic. By Proposition~\ref{subcurvature}, the induced metric on $P$ has $\HBC\geq 0$.

The orthogonal projection
\begin{equation}\label{rNL}
F\times Q\longrightarrow F\times\{q_0\},
\qquad (x,q)\longmapsto(x,q_0),
\end{equation}
descends to a $1$-Lipschitz retraction
$p_P:N\to P$. Consequently,
\begin{equation}\label{eq:distance}
d_N|_{P\times P}=d_P.
\end{equation}
Thus every geodesic ray in $P$ is also a
geodesic ray in $N$.

Suppose that $P$ is noncompact and set $f=v|_P$.
Since $f$ is harmonic and has at most linear growth,
Theorem~\ref{NT} implies that $\nabla^P f$
is parallel.

If $\nabla^P f=0$, choose any geodesic ray $\sigma$
in $P$ starting at $o$. 
Otherwise, set $a=|\nabla^P f|$, and choose the unit-speed geodesic $\sigma:[0,\infty)\to P$
with
\begin{equation}
\sigma(0)=o,\qquad
\dot\sigma(0)=-\nabla^P f(o)/a.
\end{equation}
Then $f(\sigma(t))=f(o)- at$. Since $f$ is
$a$-Lipschitz,
$at=|f(\sigma(t))-f(o)|\leq a\,d_P(o,\sigma(t))\leq at.$
Thus $d_P(o,\sigma(t))=t$. $\sigma$ is a ray.

In either case,
\begin{equation}
f(\sigma(t))\leq f(o)\qquad(t\geq0).
\end{equation}
Since $d_N|_{P\times P}=d_P$, the ray $\sigma$
is also a ray in $N$. But \eqref{eq:raygrowth}
gives
\begin{equation}
f(\sigma(t))\geq ct-C,\qquad c>0,
\end{equation}
a contradiction. Thus $P$ is compact.
\end{proof}

\begin{lemma}\label{uniE}
Under the assumptions above,
\begin{equation}
P=M.
\end{equation}
Its
holomorphic normal bundle $N_{M/N}\simeq E $ has a flat Chern connection. In particular,  $ E $  is unitary flat.
\end{lemma}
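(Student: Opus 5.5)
We first show $P=M$ by a dimension and compactness argument, and then read off the flat structure of the normal bundle from the product splitting $\widetilde N=F\times Q$.

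\textbf{Step 1: $P=M$.} By Lemma~\ref{L}, $P$ is a compact connected complex submanifold of $N$ containing $M$; it is connected because $F\times\{q_0\}$ is connected. We compare dimensions. Restrict the bundle projection $\pi\colon N\to M$ to $P$. This gives a holomorphic map $\pi|_P\colon P\to M$ with $\pi|_P\circ\iota=\Id_M$, so $\pi|_P$ is surjective. Each fiber of $\pi|_P$ is $P\cap E_x$, a compact analytic subset of the affine space $E_x\cong\C^r$. Such a set must be finite, since the coordinate functions are holomorphic on each compact irreducible component and are therefore constant. Hence $\dim P=\dim M$. Since $M\subset P$ is a closed submanifold of the same dimension in a connected manifold $P$, $M$ is open and closed in $P$, and so $P=M$. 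It follows that $\widetilde M=F\times\{q_0\}$, and in particular $T^{1,0}\widetilde M=T^{1,0}F$ along this slice.

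\textbf{Step 2: the normal bundle.} Along $F\times\{q_0\}$ the normal bundle in $\widetilde N$ is canonically the trivial bundle $F\times T^{1,0}_{q_0}Q$. Because the metric is a product, the induced Chern connection on this normal bundle, viewed as $T^{1,0}Q$ pulled back to the slice, is the trivial flat connection. Concretely, Proposition~\ref{subcurvature}(i) applies with vanishing second fundamental form, since the slice is totally geodesic and $\widetilde{\mathcal D}^\perp$ is parallel, and the ambient curvature $R^{\widetilde N}(X,\overline X)$ with $X$ tangent to $F$ acts trivially on $T^{1,0}Q$. By Lemma~3.2(3) each deck transformation acts by $\gamma(x,q)=(\alpha_\gamma(x),\beta_\gamma(q))$ with $\beta_\gamma(q_0)=q_0$. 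Its differential on the normal directions is therefore the constant map $d\beta_\gamma(q_0)\in U(T_{q_0}^{1,0}Q)$, which is unitary because $\beta_\gamma$ is a holomorphic isometry of $Q$. Therefore
\[
N_{M/N}\cong \bigl(\widetilde M\times T^{1,0}_{q_0}Q\bigr)/\Gamma_N,\qquad \rho(\gamma)=d\beta_\gamma(q_0),
\]
which is a unitary flat bundle whose flat metric connection agrees with the Chern connection of the induced metric.

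\textbf{Step 3: identification with $E$.} Finally, for the zero section of a vector bundle, $N_{M/N}\cong T^{1,0}N|_M/T^{1,0}M\cong\ker(d\pi)|_M\cong E$ holomorphically. Under this isomorphism, the orthogonal complement $T^{1,0}Q$ maps isomorphically onto the quotient. The flat Hermitian structure therefore transfers to $E$ and makes $E$ unitary flat, using $\pi_1(M)\cong\pi_1(N)$. The main obstacle is Step 1, namely excluding the possibility that $P$ is strictly larger than $M$. The fiber-finiteness argument handles this: compact analytic subsets of $\C^r$ are finite.
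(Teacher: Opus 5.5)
Your proof is correct. Steps 2--3 coincide with the paper's argument: the normal bundle along the slice is $F\times T^{1,0}_{q_0}Q$, and $\Gamma_N$ acts on it through the unitary maps $d\beta_\gamma|_{q_0}$. Your proof of $P=M$, however, takes a different route.

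\textbf{The paper's route.} The paper argues topologically. The retraction $p_P\colon N\to P$ built in Lemma~\ref{L} makes $H_k(P;\R)\to H_k(N;\R)\cong H_k(M;\R)$ injective. So if $\dim_\C P>\dim_\C M$, the fundamental class of the compact connected manifold $P$ would inject into $H_{2\dim_\C P}(M;\R)=0$.

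\textbf{Your route.} You argue complex-analytically. The fibers of $\pi|_P$ are compact analytic subsets of $E_x\cong\C^r$, hence finite. Finite fibers force $\dim_\C P\le\dim_\C M$, by the rank theorem applied at a point where $d(\pi|_P)$ has maximal rank.

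\textbf{Comparison.} Your argument never uses the retraction $p_P$. It needs only the compactness and connectedness of $P$ and the fact that the fibers of $N\to M$ are Stein. In fact it shows more: any compact connected complex submanifold of $\Tot(E)$ containing the zero section is the zero section. The paper's argument is instead purely topological and simply reuses the retraction it has already constructed.

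\textbf{A small precision on Step 3.} The flat Hermitian metric you transport to $E$ is the one coming from $T^{1,0}Q|_{\widetilde M}$, that is, the quotient metric on $N_{M/N}$. It is not the metric that $\ker(d\pi)|_M\subset\iota^*T^{1,0}N$ inherits from $g_N$, whose Chern curvature need not vanish. This is harmless, because unitary flatness only asks for some flat Hermitian structure on $E$.
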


\begin{proof}
The retraction $p_P:N \to P$ in Lemma~\ref{L} gives an injection
\begin{equation}
H_{k}(P;\mathbb R)\hookrightarrow H_{k}(N;\mathbb R).
\end{equation}

On the other hand, fiberwise contraction gives a deformation
retraction of \(N\) onto \(M\), and hence
\begin{equation}
H_k(N;\mathbb R)\cong H_k(M;\mathbb R).
\end{equation}

Suppose $\dim_\C P>\dim_\C M$. Since \(P\) is a compact connected complex manifold, 
\begin{equation}
\mathbb R\simeq H_{2\dim_\C P}(P;\mathbb R)
\hookrightarrow H_{2\dim_\C P}(M;\mathbb R)=0.
\end{equation} 
Thus $\dim_\C P=\dim_\C M$. Then $M$ must be open in $P$. It is also closed
because it is compact.
Since $P$ is connected, $P=M$.

Note that
\begin{equation}
\widetilde N=F\times Q,\ \widetilde M=\widetilde P=F\times\{q_0\} \Rightarrow N_{\widetilde M/\widetilde N}\simeq F\times T_{q_0}Q.
\end{equation}

The deck transformation acts on $N_{\widetilde M/\widetilde N}$ as
\begin{equation} F \times T_{q_0}Q \to F \times T_{q_0}Q, \qquad
 (x,v)\mapsto
 \bigl(\alpha_\gamma(x),d\beta_\gamma|_{q_0}v\bigr).
\end{equation}

 Set $\rho_\gamma=d\beta_\gamma|_{q_0} \in U(r)$. The above action $\gamma \cdot(x,v)=\bigl(\alpha_\gamma(x),\rho_\gamma v\bigr)$ gives a unitary flat quotient normal bundle \begin{equation}N_{M/ N}=(F \times T_{q_0}Q)/\Gamma_N=(F \times \C^r)/\Gamma_N.\end{equation}

\end{proof}

\begin{proof}[Proof of Theorem~\ref{T2}]
The proof of Lemma~\ref{L} implies the zero section $M=P$ is totally geodesic, thus has $\HBC \geq 0$. The unitary flatness of $E$ follows by Lemma~\ref{uniE}.

Conversely, if \(M\) admits a Kähler metric with $\HBC \geq 0$ and \(E\) is unitary flat, then the product metric \(\widetilde g_M\oplus g_{\mathrm{Euc}}\) on \(\widetilde M\times\mathbb C^r\) descends to a complete Kähler metric with $\HBC \geq 0$ on \(\operatorname{Tot}(E)\) since the covering map is a holomorphic local isometry.
\end{proof}

\section{Metric Splitting and Global Normal Form in the Nonnegative Case}

\begin{proof}[Proof of Theorem~\ref{main2}]

The de Rham decomposition theorem gives a holomorphic isometric splitting
\begin{equation}
(\widetilde N,\widetilde g_N) \simeq_{\mathrm{hol.\ isom.}} (F,g_F)\times(Q,g_Q),\end{equation} 
and the unitary flatness of $E$ gives a holomorphic, not necessarily isometric, splitting
\begin{equation}
\widetilde N\cong_{\mathrm{hol}}F\times\mathbb C^r.
\end{equation}

Consider the map $\mathcal Z=\operatorname{pr}_2\circ\Phi_2\circ\Phi_1^{-1}:F \times Q \rightarrow \mathbb C^r,\ (x,q)\mapsto z(x,q)$ defined by the following commutative diagram:
\begin{equation}
\begin{CD}
\widetilde N @>{\Phi_2}>> F\times\mathbb C^r\\
@V{\Phi_1}V{\cong}V @VV{\operatorname{pr}_2}V\\
F\times Q @>{\mathcal Z}>> \mathbb C^r.
\end{CD}
\end{equation}
Then $\mathcal Z\bigl(\alpha_\gamma(x),\beta_\gamma(q)\bigr) = \rho(\gamma)\mathcal Z(x,q)$.
\[\textbf{Claim: } \text{The map } \mathcal Z(x,q) \text{ is independent of  } x \text{ and hence descends to } z:Q \to \C^r.\]

Fix $q \in Q$. For $R>0$, define the compact ball $\textnormal{Ball}_R=\overline{B_Q(q_0,R)}$. It is $\Gamma_N$-invariant since every $\beta_\gamma$ in \eqref{alphabeta} fixes $q_0$. Then
\begin{equation}
\Omega_R=(F\times \textnormal{Ball}_R)/\Gamma_N
\end{equation}
is a fiber bundle over compact \(M=F/\Gamma_N\) with compact fiber \(\textnormal{Ball}_R\), thus is compact.

On the other hand, by the equivariance of \(\mathcal Z\) and the fact that
\(\rho(\Gamma_N)\subset U(r)\), we have
\begin{equation}
|\gamma \cdot \mathcal Z(x,q)|=|\mathcal Z(\alpha_\gamma(x),\beta_\gamma(q))|
=|\rho_\gamma \mathcal Z(x,q)|
=|\mathcal Z(x,q)|, \ \gamma \in \Gamma_N.
\end{equation}

Thus  \(|\mathcal Z|\) descends to a continuous function on \(\Omega_R\). In particular, for every fixed \(q\in Q\), the function \(\mathcal Z(\cdot,q)\) is bounded on \(F\).

 Since \(F\) is complete with $\Ric \geq 0$, $\mathcal Z(\cdot,q)$ is constant \cite{Yau1975}. It descends to a holomorphic map
\begin{equation}
z:Q\longrightarrow \mathbb C^r.
\end{equation}

Since \(\mathcal Z\) is a surjective submersion and \(\dim_\mathbb C Q=r\), \(z\) is a surjective local biholomorphism. Moreover,
\begin{equation}
\mathcal Z^{-1}(w)=F\times z^{-1}(w).
\end{equation}
The left side is connected while \(z^{-1}(w)\) is discrete, hence \(|z^{-1}(w)|=1\). Thus \(z\) is biholomorphic and satisfies
\begin{equation}
z\circ\beta_\gamma=\rho(\gamma)\circ z.
\end{equation}

Define $g_0 := (z^{-1})^* g_Q$. It is a $\rho(\Gamma_N)$-invariant complete Kähler metric on $\C^r$ with $\HBC_{g_0}\geq 0$.

Now define
\begin{equation}
\Psi=(\operatorname{id}_F\times z)\circ\Phi_1,
\qquad
\widetilde f_N=\Psi^{-1}\circ\Phi_2.
\end{equation}

Since \(\Psi\) and \(\Phi_2\) are \(\Gamma_N\)-equivariant biholomorphisms agreeing on the lifted zero section, \(\widetilde f_N\) descends to a holomorphic automorphism \(f_N:N\to N\) fixing \(M\) pointwise.
Moreover,
\begin{equation}
f_N^*g_N=(\widetilde g_M\oplus g_0)/\Gamma_N.
\end{equation}

The converse follows from the product construction.

\end{proof}

\section{Vector Bundles in the Negative Case}

\begin{proposition}[Calabi metrics]\cite{Calabi1979}
With the notation of Theorem~\ref{main1},
fix a Kähler metric $g_M$ with $\HBC_{g_M}<0$ and a Griffiths-negative Hermitian metric $h$ on $E$. Let $\pi_E:E\to M$ be the projection and $s(\cdot)=|\cdot|_{h}^2$ be the norm function on $E$. 

For $u\in C^\infty([0,\infty))$, define
\begin{equation}\label{CalabiMetric}
\omega_u=\pi^*\omega_M+\sqrt{-1}\,\partial\overline\partial u(s).
\end{equation}
Then $\omega_u$ is K\"ahler if and only if
\begin{equation}\label{c1}
u'(s)>0,
\qquad
u'(s)+s u''(s)>0
\quad(s\ge0).
\end{equation}
If these conditions hold, the associated K\"ahler metric is complete
if and only if
\begin{equation}\label{c2}
\int_1^\infty
\sqrt{\frac{u'(s)+s u''(s)}{s}}\,ds
=
\infty.
\end{equation}
\end{proposition}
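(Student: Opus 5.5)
We compute $\sqrt{-1}\,\partial\overline\partial u(s)$ in adapted local frames and then read off both positivity and completeness. Near $p\in M$ we choose local coordinates $z$ on $M$ and a holomorphic frame $\{e_\alpha\}$ of $E$ normal at $p$, meaning $h_{\alpha\bar\beta}(p)=\delta_{\alpha\beta}$ and $dh_{\alpha\bar\beta}(p)=0$. With fiber coordinates $\xi$ we have $s=h_{\alpha\bar\beta}(z)\xi^\alpha\overline{\xi^\beta}$. Then
\begin{equation*}
\sqrt{-1}\,\partial\overline\partial u(s)=u'(s)\,\sqrt{-1}\,\partial\overline\partial s+u''(s)\,\sqrt{-1}\,\partial s\wedge\overline\partial s .
\end{equation*}
At a point over $p$, the normal-frame conditions give
\begin{equation*}
\sqrt{-1}\,\partial\overline\partial s=\sqrt{-1}\sum_\alpha d\xi^\alpha\wedge d\overline{\xi^\alpha}-\langle\Theta^E\xi,\xi\rangle,
\end{equation*}
where $-\langle\Theta^E\xi,\xi\rangle$ is a $(1,1)$-form in the base directions. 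Griffiths negativity makes this form semipositive, and strictly positive when $\xi\ne0$. We also have $\partial s=\sum_\alpha\overline{\xi^\alpha}\,d\xi^\alpha$ at that point.

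\emph{Kähler condition.} At such a point, $\omega_u$ splits into a horizontal block and a vertical block with no mixed terms. The horizontal block is $\omega_M+u'(s)\,(-\langle\Theta^E\xi,\xi\rangle)$. The vertical block is $u'(s)\,\Id+u''(s)\,\bar\xi\otimes\xi$, whose eigenvalues are $u'$ (multiplicity $r-1$, on $\xi^\perp$) and $u'+su''$ (on the radial direction $\xi$).

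Sufficiency follows: if $u'>0$ and $u'+su''>0$, the vertical block is positive, and the horizontal block is $\omega_M$ plus a semipositive term, so it is positive too. For necessity, restrict $\omega_u$ to a fiber $E_p$, which is $\C^r$ with the flat metric $h_p$. There $\omega_u$ is exactly the form above, so positivity forces $u'>0$ (using $r\ge2$, or reading it off at $s=0$ when $r=1$) and $u'+su''>0$ for all $s\ge0$, since every value $s\ge0$ is attained on the fiber. The case $s=0$ needs a separate check: at the zero section the vertical block is $u'(0)\,\Id$, so $u'(0)>0$ is forced.

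\emph{Completeness.} Write $\rho$ for the distance from the zero section. The radial direction has squared length
\begin{equation*}
(u'+su'')\,\frac{|ds|^2}{4s}\quad\text{in the } s \text{ variable},
\end{equation*}
since $|\partial s|^2=s$ in the flat fiber metric and $u''\,\partial s\wedge\overline\partial s$ combines with $u'$ along $\xi$. Hence
\begin{equation*}
\rho(s)\approx\int^s\sqrt{\frac{u'+su''}{4t}}\,dt .
\end{equation*}
If the integral in \eqref{c2} diverges, then $\rho$ is a proper exhaustion with bounded gradient. More precisely, $|\nabla\rho|\le1$ because $|d s|^2_{\omega_u}\le 4s/(u'+su'')$, as the radial eigenvalue bounds the full metric from below in the $ds$ direction. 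Since $M$ is compact, closed bounded sets are then compact, and Hopf–Rinow gives completeness. Conversely, if the integral converges, the radial curve $t\mapsto t\xi_0$ in one fiber has finite length while leaving every compact set, so the metric is incomplete.

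\emph{Main obstacle.} The delicate step is the completeness direction. We must show rigorously that the radial direction realizes the distance, that is, $|\nabla s|^2_{\omega_u}=4s/(u'+su'')$ exactly. This follows from the block structure: $ds$ annihilates the horizontal directions in the normal frame, since $\partial s=\bar\xi\cdot d\xi$ there. So $ds$ sees only the vertical block, whose inverse evaluated on $\bar\xi$ gives $s/(u'+su'')$.
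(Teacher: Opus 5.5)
Your argument is correct. The paper gives no proof of this proposition and simply cites Calabi, so your normal-frame computation is a self-contained replacement rather than a variant of something in the text.

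What your approach buys is transparency about the hypotheses. In a frame with $h_{\alpha\bar\beta}(p)=\delta_{\alpha\beta}$ and $dh(p)=0$, the form $\omega_u$ is block diagonal at points over $p$. The curvature of $h$ enters only through the horizontal block $\omega_M+u'(s)\bigl(-\sqrt{-1}\langle\Theta^E\xi,\xi\rangle\bigr)$. Consequences:
\begin{itemize}
\item Only the sufficiency half of the Kähler criterion uses the curvature, and it needs only Griffiths \emph{semi}negativity.
\item The necessity half and the completeness criterion are purely fiberwise. They use only that $\omega_u$ is Kähler and $M$ is compact, never the sign of $\Theta^E$ or $\HBC_{g_M}<0$.
\end{itemize}
The real content of the completeness part is the exact identity $|\partial s|^2_{\omega_u}=s/(u'+su'')$. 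You derive it correctly from the block structure together with the fact that $\partial s=\bar\xi^\alpha d\xi^\alpha$ is purely vertical and radial there.

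Two small points need tidying.

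\emph{The case $r=1$.} Restricting to a fiber gives only $u'+su''>0$ for $s\ge0$. ``Reading it off at $s=0$'' gives $u'(0)>0$, but not $u'(s)>0$ for $s>0$. Add one line: $(su')'=u'+su''>0$ and $su'$ vanishes at $s=0$, so $su'>0$, hence $u'>0$ for $s>0$.

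\emph{The function $\rho$.} In the completeness part, do not call $\rho$ the distance from the zero section. Instead set $\rho=F(s)$ with $F(s)=\int_0^s\sqrt{(u'(t)+tu''(t))/(4t)}\,dt$. Then:
\begin{itemize}
\item Your identity gives $|\nabla\rho|=1$ off $M$.
\item $\rho$ is continuous on $N$ and vanishes on $M$, so a curve-splitting argument at the first and last times a curve meets $M$ shows $\rho$ is $1$-Lipschitz. Its mere Lipschitz regularity along $M$ is harmless; alternatively, smooth $F$ on $[0,1]$.
\item Since the sets $\{s\le c\}$ are compact, $\rho$ is proper exactly when the integral in \eqref{c2} diverges.
\end{itemize}
Your converse, via a radial ray of finite length leaving every compact set, is fine as written.
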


For $b,A>0$ and an integer $k\ge3$, set
\begin{equation} u(s)=s+bs^2+As^k.\end{equation} A direct computation shows that \(u\) satisfies the conditions \eqref{c1} and \eqref{c2}. Hence \eqref{CalabiMetric} defines a complete K\"ahler metric $g_u$ on $N$. It remains to prove

\begin{theorem}[Related to Theorem~\ref{main1}(4)]\label{Rgu}
There exist \(b,A>0\) and an integer \(k\ge3\) such that
\begin{equation} R^{g_u}(X,\overline X,Y,\overline Y)<0 \end{equation} 
for every nonzero $X,Y \in T^{1,0}N$.
\end{theorem}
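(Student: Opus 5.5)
We compute the curvature of $\omega_u$ in adapted local coordinates and estimate it term by term. Fix $p\in N$ with $\pi(p)=x\in M$. Choose holomorphic coordinates $z$ on $M$ centred at $x$ and a holomorphic frame $e_\alpha$ of $E$ that is normal for $h$ at $x$: $h_{\alpha\bar\beta}(x)=\delta_{\alpha\beta}$ and $dh_{\alpha\bar\beta}(x)=0$. In this frame, $s=h_{\alpha\bar\beta}(z)w^\alpha\bar w^\beta$, and at $p$ we have
\begin{equation*}
\sqrt{-1}\partial\bar\partial s=\sqrt{-1}\bigl(dw^\alpha\wedge d\bar w^\alpha-\Theta_{i\bar j\alpha\bar\beta}w^\alpha\bar w^\beta dz^i\wedge d\bar z^j\bigr).
\end{equation*}
Since $E<_{\Gr}0$, the horizontal term $-\Theta(\cdot,\cdot,w,\bar w)$ is positive and of size $\asymp s$. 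Write
\begin{equation*}
\omega_u=\pi^*\omega_M+u'(s)\sqrt{-1}\partial\bar\partial s+u''(s)\sqrt{-1}\partial s\wedge\bar\partial s .
\end{equation*}
Decompose a tangent vector $X=X_H+X_V$ into its horizontal part (along $dz$) and vertical part (along $dw$), and split the vertical part further into the radial direction (along $w$) and the directions orthogonal to it. In these terms the metric is block diagonal at $p$:
\begin{itemize}
\item $g_M+u'(s)\,(-\Theta_w)$ on horizontal vectors,
\item $u'(s)$ on vertical directions orthogonal to $w$,
\item $u'(s)+su''(s)$ in the radial direction.
\end{itemize}

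Next, we compute $R(X,\bar X,Y,\bar Y)=-\partial_X\bar\partial_{\bar X}g(Y,\bar Y)+g^{-1}(\partial g,\bar\partial g)$ at $p$ and sort the terms by their bidegree in $(X_H,X_V)$ and $(Y_H,Y_V)$. The terms fall into three groups.
\begin{enumerate}
\item The purely horizontal terms equal $R^M(X_H,\bar X_H,Y_H,\bar Y_H)$ plus contributions of order $u'(s)\,s\,|\nabla\Theta|$ and $u'(s)\,s\,|\Theta|^2$-type terms. These are nonpositive for small $s$ up to an error $O(s)$, and $R^M<0$ gives a uniform negative margin on the compact $M$.
\item The mixed horizontal--vertical terms are $u'(s)\,\Theta(X_H,\bar X_H,Y_V,\bar Y_V)<0$, coming from $E<_{\Gr}0$, together with terms involving $u''$. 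In particular, the cross term $-u''\,|{\partial s}(X)|^2\,|\ldots|$ supplies the negativity needed when $X$ is horizontal and $Y$ is radial.
\item The purely vertical terms are those of the rotationally symmetric Kähler metric $\sqrt{-1}\partial\bar\partial u(|w|^2)$ on $\C^r$ twisted by $h$. With $\phi=u'$ and $\psi=(su')'$, their sign is governed by the classical quantities $\bigl(\phi'/\phi\bigr)$ and $\bigl(\psi'/\psi-\ldots\bigr)$. For $u=s+bs^2+As^k$, both $u'$ and $su'$ grow convexly, and this makes the vertical curvature $\le$ a negative multiple of $(u''/u'')$-type quantities. This is the point of adding $bs^2$ (to get strict negativity near $s=0$) and $As^k$ (to dominate at infinity).
\end{enumerate}

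Finally, the whole quartic form is of the shape
\begin{equation*}
-\bigl(a_1|X_H|^2|Y_H|^2+a_2|X_H|^2|Y_V|^2+a_3|X_V|^2|Y_H|^2+a_4|X_V|^2|Y_V|^2\bigr)+\text{cross terms},
\end{equation*}
with each $a_i>0$ having explicit dependence on $s,b,A,k$. We absorb the cross terms, for example those of type $X_H\bar X_V Y_V\bar Y_H$ with coefficients involving $u''$ and $\nabla\Theta$, by Cauchy--Schwarz, $2|ab|\le\epsilon|a|^2+\epsilon^{-1}|b|^2$. We treat three ranges of $s$ separately:
\begin{itemize}
\item for small $s$, using $b$;
\item for intermediate $s$, using the compactness of $\{s\le S\}$ and continuity;
\item for large $s$, where $A s^k$ dominates and the metric is asymptotically $A s^{k-1}(\cdot)$. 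In this range negativity follows because $\Theta<0$ scales as $u'$, while the horizontal metric also scales as $u'\,s$.
\end{itemize}

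The main obstacle is the uniform control of the mixed terms for large $s$. There the horizontal metric $g_M+u's(-\Theta)$ makes $R^M$ negligible, so the negativity in the horizontal--horizontal block must come from the second fundamental form term $-u'\,\Theta$ and from $(u's)'$-terms, and these must beat the $\nabla\Theta$ error terms. My first attempt would be to fix $b$ first so that the small-$s$ region works, then choose $k$ large so that ratios like $s u''/u'\to k-1$ give a definite margin, and finally choose $A$ large.
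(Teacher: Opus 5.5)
Your near-zero-section step (use $b$ to get negativity on $\{s=0\}$, then continuity) matches the paper. Away from the zero section, however, there is a genuine gap, and it sits exactly where you flag the ``main obstacle''. The indefinite terms (those with $\partial\overline\partial\Theta$, $\nabla\Theta$, and the $g^{-1}\partial g\,\overline{\partial g}$ corrections) scale in $s$ exactly like the favourable ones. Remarks such as ``$\Theta$ scales as $u'$ while the horizontal metric scales as $u's$'' therefore carry no sign information. What you need is a coefficient comparison that is uniform over all pairs of directions, with a constant independent of $b,k,A$.

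The paper obtains this from homogeneity. For the Euler field, $\nabla^{\chi_m}\mathcal E=m\,\Id$, so $R^{\chi_m}(\mathcal E,\cdot,\cdot,\cdot)=0$ and
\[
R^{\chi_m}(X,\overline X,Y,\overline Y)=ms^{m-1}R^{\chi}(X^\perp,\overline{X^\perp},Y^\perp,\overline{Y^\perp})-m(m-1)s^{m-2}\mathcal B(X,Y).
\]
Compactness of $\{s=1\}$ and scaling give $|R^\chi(X^\perp,\overline{X^\perp},Y^\perp,\overline{Y^\perp})|\le (C/s)\,\mathcal B(X,Y)$. Summing the curvatures of the pieces $\pi^*\omega_M,\chi,b\chi_2,A\chi_k$ gives $R^{g_M}+u'R^\chi(X^\perp,\dots)-u''\mathcal B\le (Cu'/s-u'')\mathcal B$. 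So negativity reduces to $su''/u'>C$. This is the precise form of your heuristic $su''/u'\to k-1$, and it shows the good term in the horizontal block is $-u''\mathcal B$, not $-u'\Theta$. Without such a structural bound, ``absorb the cross terms by Cauchy--Schwarz'' is only asserted.

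The pairs where $\mathcal B$ degenerates ($X$ or $Y$ proportional to $\mathcal E$) need a separate argument, and the one you indicate fails. At your base point $\partial s(X_H)=0$, so a term $-u''|\partial s(X)|^2$ contributes nothing for horizontal $X$. Worse, since $R^{\chi_k}(\mathcal E,\cdot,\cdot,\cdot)=0$, for large $s$ the leading parts of $R^{g_u}(X,\overline X,\mathcal E,\overline{\mathcal E})$ cancel. This diagonal term is thus of lower order than the individual terms producing it. Cross terms with one slot along $\mathcal E$ can then be absorbed only if the same cancellation is tracked in them; bounding their coefficients by size, as you propose, loses it.

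The paper handles all of this with the Gauss equation for the diagonal immersion $\Xi(q)=(\pi(q),q,q,q)$ into $(M\times N^\times\times N^\times\times N^\times,\,g_M\oplus\chi\oplus b\chi_2\oplus A\chi_k)$. Every such contribution is packaged into $-\|\Pi^\Xi(X,Y)\|^2\le 0$, and $\Pi^\Xi(\mathcal E,Y)\neq 0$ for $Y\neq 0$ because $(0,Y,2Y,kY)$ is never tangent to $\Xi(N^\times)$.

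Finally, your three-range scheme does not close as stated. On a shell $\{\delta\le s\le S\}$, compactness and continuity give bounds, not a sign. Also, after fixing $b$ and $\delta$, you must explain why taking $A$ large preserves negativity on $\{s<\delta\}$, where $As^k$ is not small. The paper uses $R^{g_b+A\chi_k}\le R^{g_b}+AR^{\chi_k}$ with $R^{\chi_k}\le -k(k-1-C)s^{k-2}\mathcal B\le 0$ there, and a single large $A$ then covers all of $\{s\ge\delta\}$.
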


For simplicity, we sometimes denote the corresponding Kähler metric $g$ by its Kähler form $\omega$.

The proof contains two parts. We show $R^{g_u}<0$ on the neighborhood $\{s<\delta \}$ near the zero section in Proposition~\ref{NearXeroFinal} and on $\{s \geq \delta \}$ in Proposition~\ref{AwayFromXero}. 

For $m\ge1$, set
\begin{equation} \chi_m:=\sqrt{-1}\partial\overline\partial s^m, \qquad \chi:=\chi_1=\sqrt{-1}\partial\overline\partial s. \end{equation}

On $N^\times:=N\setminus M$, let\begin{equation}\omega_{\chi}:=\pi^*\omega_M+\chi,\qquad \omega_b:=\omega_{\chi}+b\chi_2, \text{\ \  and \ \ }\mathcal E = \sum_{\alpha=1}^r v^\alpha\frac{\partial}{\partial v^\alpha}.\end{equation}

Then $\mathcal E$ is the holomorphic fiber Euler field on $N^\times$ and one can prove $g_b$ and $g_\chi$ are K\"ahler metrics on $N^\times$.

For $X,Y\in T^{1,0}N^\times$, define their components orthogonal to \(\mathcal E\) with respect to \(\chi\)  by
\begin{equation}\label{XY}X^\perp = X-\frac{\partial s(X)}{s}\mathcal E, \qquad Y^\perp = Y-\frac{\partial s(Y)}{s}\mathcal E, \end{equation} 
and 
\begin{equation} \mathcal B(X,Y) = |X^\perp|_\chi^2|Y^\perp|_\chi^2 + |\chi(X^\perp,\overline{Y^\perp})|^2. \end{equation}

Both \(\chi\) and \(\chi_m\) are readily seen to define K\"ahler metrics on \(N^\times\).

\begin{lemma}\label{Euler}
Let \(g_m\) be the Kähler metric associated with \(\chi_m\) on \(N^\times\). For every integer \(m\ge1\), the following identities hold.
\begin{equation}\label{perp}\chi(\mathcal E,\overline{\mathcal E})=s,\qquad\chi(\, \cdot  \, ,\overline{\mathcal E})=\partial s(\, \cdot  \, ),\ \text{ and \ } \nabla^{\chi_m}_Y\mathcal E=mY, 
R^{\chi_m}(\mathcal E,\, \cdot  \, ,\, \cdot  \, ,\, \cdot  \, )=0.\end{equation}

Fix \(p\in N^\times\). Choose \(\chi\)-normal holomorphic coordinates at \(p\), and extend \(X,Y,Z\) as constant-coefficient holomorphic vector fields. If $Xs=Ys=0$ at $p$, then the following identities hold at $p$.

\begin{enumerate}[label=(\arabic*)]
    \item $\partial_i\partial_j s=0$, $\partial_i\partial_{\bar j}s=\chi_{i\bar j}$.
    \item $\partial_X(s^m)=0$, $\partial_X\partial_{\bar X}(s^m)=ms^{m-1}|X|_\chi^2$.
    \item $\partial_X(Ys)=0$, $\partial_{\bar X}(Ys)=\chi(Y,\bar X)$.
    \item $\partial_X\chi(Y,\bar Z)=0$, $\partial_X\partial_{\bar X}\chi(Y,\bar Y)=-R^\chi(X,\bar X,Y,\bar Y)$.
\end{enumerate}
\end{lemma}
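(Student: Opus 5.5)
The plan is to derive everything from the local form of $s$ and its homogeneity in the fibre variables. In a local holomorphic frame of $E$ with base coordinates $z$ and fibre coordinates $v^\alpha$, we have
\[
s=h_{\alpha\bar\beta}(z)\,v^\alpha\overline{v^\beta}.
\]
Hence $\mathcal E s=s$ and $\overline{\mathcal E}s=s$. More generally, $s^m$ is homogeneous of bidegree $(m,m)$, so $\mathcal E(s^m)=\overline{\mathcal E}(s^m)=m s^m$.

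\emph{The global identities.} Let $Y$ be a local holomorphic vector field. Since $\mathcal E$ is holomorphic, $\overline{\mathcal E}$ commutes with $Y$, and therefore
\[
\chi_m(Y,\overline{\mathcal E})=Y\overline{\mathcal E}(s^m)=m\,Y(s^m)=m\,\partial(s^m)(Y).
\]
For $m=1$ this gives $\chi(\cdot,\overline{\mathcal E})=\partial s(\cdot)$, and evaluating at $Y=\mathcal E$ gives $\chi(\mathcal E,\overline{\mathcal E})=s$.

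For the covariant derivative, I use that on a Kähler manifold the Chern and Levi-Civita connections agree. Consequently $\nabla_Y\overline Z=0$ for holomorphic $Y,Z$, and $\nabla_{\overline Z}\mathcal E=0$. Using the homogeneity identity with $\overline Z$ in place of $\overline{\mathcal E}$, I get
\[
\chi_m(\nabla^{\chi_m}_Y\mathcal E,\overline Z)=Y\bigl(\chi_m(\mathcal E,\overline Z)\bigr)=Y\overline Z\bigl(\mathcal E s^m\bigr)=m\,Y\overline Z(s^m)=m\,\chi_m(Y,\overline Z).
\]
Here I used that $[\mathcal E,\overline Z]=0$. This shows $\nabla^{\chi_m}_Y\mathcal E=mY$ on $T^{1,0}$.

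For the curvature, take coordinate fields $Y,Z$ and compute
\[
R(Y,\overline Z)\mathcal E=\nabla_Y\nabla_{\overline Z}\mathcal E-\nabla_{\overline Z}\nabla_Y\mathcal E=0-\nabla_{\overline Z}(mY)=0.
\]
The Kähler symmetries of $R$ then give $R^{\chi_m}(\mathcal E,\cdot,\cdot,\cdot)=0$.

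\emph{The pointwise identities.} Now take $\chi$-normal holomorphic coordinates at $p$, so that $\chi_{i\bar j}(p)=\delta_{ij}$ and $\partial_k\chi_{i\bar j}(p)=0$. Items (2)–(4) are then mechanical once (1) holds.
\begin{itemize}
\item For (2), note $\partial_X(s^m)=ms^{m-1}Xs=0$. Also $\partial_X\partial_{\bar X}(s^m)=ms^{m-1}\chi(X,\bar X)+m(m-1)s^{m-2}|Xs|^2$, and the last term vanishes.
\item For (3), $\partial_X(Ys)=\partial_X\partial_Y s$ is a holomorphic Hessian entry, which vanishes by (1). Also $\partial_{\bar X}(Ys)=\chi(Y,\bar X)$ by definition of $\chi$.
\item For (4), the first identity is normality of the coordinates. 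The second is the standard normal-coordinate formula $R_{i\bar j k\bar l}=-\partial_i\partial_{\bar j}\chi_{k\bar l}+\chi^{p\bar q}\partial_i\chi_{k\bar q}\,\partial_{\bar j}\chi_{p\bar l}$, whose second term vanishes at $p$.
\end{itemize}

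\emph{The expected obstacle: the vanishing $\partial_i\partial_js(p)=0$ in (1).} Normality only controls third derivatives $\partial_k\partial_i\partial_{\bar j}s$, and a quadratic change of coordinates alters the Christoffel symbols at $p$. So one cannot freely kill the holomorphic Hessian of the fixed function $s$ while staying normal. My first attempt is to use the freedom to replace $s$ by $s-2\operatorname{Re}q$, where $q$ is a holomorphic quadratic polynomial in the normal coordinates. This does not change $\chi$, and it can be chosen to cancel $\partial_i\partial_js(p)$. The remaining task is to check that every subsequent use of (1)–(3) involves only $\chi$, together with first derivatives of $s$ in directions $X,Y$ with $Xs=Ys=0$. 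Failing that, I would verify (1) directly in adapted bundle coordinates on the subspace of directions annihilated by $\partial s$, using $h(p')=I$, $dh(p')=0$, and the $\partial s$-orthogonality to $\mathcal E$ from \eqref{XY}. That restricted statement is all that items (2)–(3) actually need.
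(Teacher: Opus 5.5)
Your proofs of the global identities and of items (2)--(4) are correct and match the paper's. The gap is the first half of (1), $\partial_i\partial_j s(p)=0$: you leave it unproved, and the obstacle you describe does not exist. It follows in one line from the identity $\chi(\cdot,\overline{\mathcal E})=\partial s$, which you already proved. In any holomorphic coordinates this reads $\partial_j s=\chi_{j\bar k}\,\overline{\mathcal E^k}$, so
\[
\partial_i\partial_j s=(\partial_i\chi_{j\bar k})\,\overline{\mathcal E^k}+\chi_{j\bar k}\,\partial_i\overline{\mathcal E^k}.
\]
The first term vanishes at $p$ by $\chi$-normality. The second vanishes identically because the components $\mathcal E^k$ are holomorphic. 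This is exactly the paper's argument.

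You are right that normal coordinates leave no freedom to kill a holomorphic Hessian. For this $s$, however, there is nothing to kill. The homogeneity $\overline{\mathcal E}s=s$ expresses $\partial_i\partial_j s$ through the third derivatives $\partial_i\partial_j\partial_{\bar k}s=\partial_i\chi_{j\bar k}$, and normality does control those. Invariantly, $\mathcal E$ is the $(1,0)$-gradient of $s$ for $\chi$ and is holomorphic. Hence the $(2,0)$-part of the $\chi$-Hessian of $s$ vanishes on all of $N^\times$, and at $p$ in $\chi$-normal coordinates that part is $\partial_i\partial_j s$.

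Neither substitute you propose would close the gap.

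\emph{Replacing $s$ by $s-2\operatorname{Re}q$.} This leaves $\chi$ unchanged, but (1) and (3) are statements about $s$ itself. The modified function no longer satisfies $\chi(\cdot,\overline{\mathcal E})=\partial s$ away from $p$. It also does not produce the same forms $\chi_m=\sqrt{-1}\,\partial\overline\partial s^m$ for $m\ge2$, so the later curvature computation for $\chi_m$ would concern the wrong metric.

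\emph{The fallback in bundle coordinates adapted to $h$ at $\pi(p)$.} This does not transfer either, because the holomorphic Hessian is not tensorial. Under a holomorphic change $z=z(w)$ it acquires the term $(\partial s/\partial z^k)\,\partial^2z^k/\partial w^a\partial w^b$, which need not vanish even for directions in $\ker\partial s$. So vanishing in those coordinates gives no information in $\chi$-normal coordinates at $p$, which is where (3) is stated.
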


\begin{proof}
For convenience, we write $\nabla^{\chi_m}=\nabla$. 

Since $\mathcal E$ is holomorphic and
$\mathcal E(s^m)=ms^m$, we have
\begin{equation}
\chi_m(\mathcal E,\overline{\partial_j})
=\mathcal E^k\partial_k\partial_{\bar j}s^m
=\partial_{\bar j}(\mathcal E(s^m))
=m\partial_{\bar j}s^m.
\end{equation}

By the K\"ahler property,
$\nabla_{\partial_i}\overline{\partial_j}=0$.
Thus
\begin{equation}
\chi_m(\nabla_{\partial_i}\mathcal E,
\overline{\partial_j})=\partial_i g_m(\mathcal E,\overline{\partial_j})=m(g_m)_{i\bar j}.
\end{equation}
Hence $\nabla_Y\mathcal E=mY$.
Consequently,
\begin{equation}
R^{\chi_m}(X,\bar Y)\mathcal E=\nabla_X\nabla_{\bar Y}\mathcal E
-\nabla_{\bar Y}\nabla_X\mathcal E=-m\nabla_{\bar Y}X=0.
\end{equation}
The K\"ahler curvature symmetries give
$R^{\chi_m}(\mathcal E,\, \cdot  \, ,\, \cdot  \, ,\, \cdot  \, )=0$.

Taking $m=1$ and using $\mathcal E(s)=s$ gives
\begin{equation}
\chi(Z,\overline{\mathcal E})=Zs,
\qquad
\chi(\mathcal E,\overline{\mathcal E})=s.
\end{equation}

The following derivative identities are calculated at $p$.
Since the coordinates are $\chi$-normal,
$\partial_i\chi_{j\bar k}=0$ at $p$.
By the first part and the holomorphicity of
$\mathcal E$,
\begin{equation}
\partial_i\partial_j s=\partial_i(\chi_{j\bar k}
\overline{\mathcal E^k})=(\partial_i\chi_{j\bar k})
\overline{\mathcal E^k}
+\chi_{j\bar k}
\partial_i\overline{\mathcal E^k}=0.
\end{equation}

The chain rule gives
\begin{equation}
\begin{aligned}
\partial_X(s^m)&=ms^{m-1}Xs=0,\\
\partial_X\partial_{\bar X}(s^m)&=ms^{m-1}\chi(X,\bar X)
+m(m-1)s^{m-2}|Xs|^2=ms^{m-1}|X|_\chi^2.
\end{aligned}
\end{equation}

Since $X$ and $Y$ have constant coefficients,
\begin{equation}
\begin{aligned}
\partial_X(Ys)
&=X^iY^j\partial_i\partial_j s=0,\\
\partial_{\bar X}(Ys)
&=Y^i\overline{X^j}
\partial_i\partial_{\bar j}s
=\chi(Y,\bar X).
\end{aligned}
\end{equation}

Finally, the normal-coordinate condition gives
\begin{equation}
\partial_X\chi(Y,\bar Z)
=X^iY^j\overline{Z^k}
\partial_i\chi_{j\bar k}=0.
\end{equation}

By the K\"ahler curvature formula,
\begin{equation}
R^\chi(X,\bar X,Y,\bar Y)=-\partial_X\partial_{\bar X}\chi(Y,\bar Y)
+\left\|\partial_X\chi(Y,\cdot)\right\|_{\chi^{-1}}^2=-\partial_X\partial_{\bar X}\chi(Y,\bar Y).
\end{equation}

\end{proof}

Lemma~\ref{Euler} implies that the nonzero curvature components occur only when all elements are perpendicular to $\mathcal E$.

\begin{proposition}\label{PowerCurvature} For every $m\ge1$, 
\begin{equation} R^{\chi_m}(X,\overline X,Y,\overline Y) = m s^{m-1} R^\chi (X^\perp,\overline{X^\perp}, Y^\perp,\overline{Y^\perp}) - m(m-1)s^{m-2}\mathcal B(X,Y). \end{equation} 
\end{proposition}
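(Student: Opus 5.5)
Since both sides of the identity are Hermitian forms of the same type in $X$ and $Y$, we compare them at a fixed point $p\in N^\times$. The idea is to reduce to vectors tangent to the level set of $s$. The first step is to use Lemma~\ref{Euler}: the identity $R^{\chi_m}(\mathcal E,\cdot,\cdot,\cdot)=0$, together with the Kähler symmetries, shows that $R^{\chi_m}$ vanishes whenever any slot contains $\mathcal E$ or $\overline{\mathcal E}$. Writing $X=X^\perp+\frac{\partial s(X)}{s}\mathcal E$, and similarly for $Y$, multilinearity then gives $R^{\chi_m}(X,\overline X,Y,\overline Y)=R^{\chi_m}(X^\perp,\overline{X^\perp},Y^\perp,\overline{Y^\perp})$. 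The right-hand side of the claim already involves only $X^\perp,Y^\perp$: this is built into $\mathcal B$ and into the first term. By the second identity in \eqref{perp}, $\partial s(X^\perp)=\partial s(X)-\frac{\partial s(X)}{s}s=0$. So we may assume from now on that $Xs=Ys=0$ at $p$.

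Next, fix $\chi$-normal holomorphic coordinates at $p$ and extend $X,Y$ as constant-coefficient fields, exactly as in Lemma~\ref{Euler}. We use the standard Kähler formula
\[
R^{\chi_m}(X,\overline X,Y,\overline Y)=-\partial_X\partial_{\bar X}\chi_m(Y,\overline Y)+\|\partial_X\chi_m(Y,\cdot)\|^2_{\chi_m^{-1}}.
\]
From $\chi_m=ms^{m-1}\chi+m(m-1)s^{m-2}\sqrt{-1}\,\partial s\wedge\overline\partial s$ we have
\[
\chi_m(Y,\overline Z)=ms^{m-1}\chi(Y,\overline Z)+m(m-1)s^{m-2}(Ys)\overline{(Zs)}.
\]
Differentiating at $p$ with Lemma~\ref{Euler}(2)--(4) gives the following.
\begin{enumerate}[label=(\alph*)]
\item The term $ms^{m-1}\chi(Y,\overline Y)$ contributes $ms^{m-1}R^\chi(X,\overline X,Y,\overline Y)$ after applying $-\partial_X\partial_{\bar X}$. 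The cross terms vanish because $Xs=0$ and $\partial_X\chi=0$.
\item The term $m(m-1)s^{m-2}|Ys|^2$ contributes $-m(m-1)s^{m-2}|\chi(Y,\overline X)|^2$, since $\partial_X(Ys)=0$ and $\partial_{\bar X}(Ys)=\chi(Y,\overline X)$. Derivatives falling on $s^{m-2}$ vanish since $Xs=0$ and $Ys=0$.
\end{enumerate}

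The main obstacle is the gradient term $\|\partial_X\chi_m(Y,\cdot)\|^2_{\chi_m^{-1}}$, together with the $|X|^2|Y|^2$ part of $\mathcal B$. At $p$, $\partial_X\chi_m(Y,\overline Z)=m(m-1)s^{m-2}\,\partial_X\big((Ys)\overline{Zs}\big)+m(m-1)s^{m-2}\partial_X(Ys)\cdots$. The derivative of $\chi$ vanishes, and $\partial_X(Ys)=0$, so what survives is $m(m-1)s^{m-2}(Ys)\partial_X\overline{(Zs)}=0$, because $Ys=0$. Hence this first-order term vanishes. The missing $-m(m-1)s^{m-2}|X|^2_\chi|Y|^2_\chi$ must therefore come from a term I dropped in (b): $\partial_{\bar X}(s^{m-2})$ is not zero for general $m$ only through $\overline{Xs}=0$, so that is not the source. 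Instead, I expect the point is that $Y$ extended with constant coefficients does not stay $\perp$. Rather, one must keep the full expression $\chi(Y,\overline Y)$ in the first term, where $\partial_X\partial_{\bar X}(s^{m-1})=(m-1)s^{m-2}|X|^2_\chi$ produces the missing piece. Carefully tracking the sign conventions for $R$ (the paper's formula has $R=-\partial\bar\partial g$) gives
\[
-\partial_X\partial_{\bar X}\big(ms^{m-1}\big)\chi(Y,\overline Y)=-m(m-1)s^{m-2}|X|^2_\chi|Y|^2_\chi.
\]
Summing the contributions yields exactly
\[
ms^{m-1}R^\chi-m(m-1)s^{m-2}\big(|X|^2|Y|^2+|\chi(X,\overline Y)|^2\big),
\]
which is the claim. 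The bookkeeping of which second derivatives survive at $p$ under $Xs=Ys=0$ is the step I would check most carefully.
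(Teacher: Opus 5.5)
Your proposal is correct and follows the paper's proof. You reduce to $Xs=Ys=0$ via $R^{\chi_m}(\mathcal E,\cdot,\cdot,\cdot)=0$, then work in $\chi$-normal coordinates with the splitting $\chi_m(Y,\bar Y)=ms^{m-1}\chi(Y,\bar Y)+m(m-1)s^{m-2}|Ys|^2$, which produces exactly the paper's terms $I$, $II$, and $III=0$. Do clean up item (a), though: as your last paragraph finds, $-\partial_X\partial_{\bar X}\bigl(ms^{m-1}\chi(Y,\bar Y)\bigr)$ already contains $-m(m-1)s^{m-2}|X|_\chi^2|Y|_\chi^2$, by Lemma~\ref{Euler}(2) applied to $s^{m-1}$. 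So only the first-order cross terms vanish there, and the detour about the constant-coefficient extension of $Y$ not staying perpendicular is unnecessary.
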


\begin{proof}
Fix a point of $N^\times$. By \eqref{XY} and \eqref{perp}, it suffices to assume
\begin{equation}
Xs=Ys=0.
\end{equation}

Choose $\chi$-normal holomorphic coordinates at the point and extend
$X,Y,Z$ holomorphically with constant coefficients.

By Lemma~\ref{Euler},
\begin{equation}  \begin{aligned} R^{\chi_m}(X,\bar X,Y,\bar Y) &=-\partial_X\partial_{\bar X} g_m(Y,\bar Y)+g_m^{-1}\bigl( \partial_Xg_m(Y,\cdot), \partial_Xg_m(Y,\cdot) \bigr)\\ 
&=- \partial_X\partial_{\overline X}
\bigl(m s^{m-1} \chi(Y,\overline Y)\bigr)-m(m-1) \partial_X\partial_{\overline X} \bigl(s^{m-2} |\partial s(Y)|^2\bigr) \\ &+\left|\partial_Xg_m(Y,\cdot)\right|_{g_m^{-1}}^2=I-m(m-1)II+III.\end{aligned}   \end{equation}

\begin{equation} I=ms^{m-1}R^\chi(X,\bar X,Y,\bar Y)-m(m-1)s^{m-2} |X|_\chi^2|Y|_\chi^2,\end{equation}

\begin{equation}
II=s^{m-2}\partial_X\partial_{\bar X}|Ys|^2=s^{m-2}|\chi(X,\bar Y)|^2,
\end{equation}

\begin{equation} \partial_X \bigl(g_m(Y,\bar Z) \bigr)=0 \Rightarrow III=0. \end{equation}

\end{proof}

\begin{proposition}\label{RN2} On $N^{\times}$, there exists a constant $C\ge1$ and an integer $k> \operatorname{max} \{ 3,C+1\}$ such that, with
$\lambda =k(k-1-C)>0$, \begin{equation} R^{\chi_k}(X,\overline X,Y,\overline Y) \le -\lambda s^{k-2}\mathcal B(X,Y)\le0, \end{equation} and 
\begin{equation} R^{\chi_2}(X,\overline X,Y,\overline Y) \le 2(C-1)\mathcal B(X,Y). \end{equation} 
\end{proposition}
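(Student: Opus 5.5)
The plan is to read off both inequalities from Proposition~\ref{PowerCurvature}. Everything then reduces to a single uniform estimate
\begin{equation*}
s\,R^\chi(X^\perp,\overline{X^\perp},Y^\perp,\overline{Y^\perp})\le C\,\mathcal B(X,Y)\qquad\text{on }N^\times .
\end{equation*}
Granting this, take $m=k$ in Proposition~\ref{PowerCurvature}:
\begin{equation*}
R^{\chi_k}(X,\overline X,Y,\overline Y)=ks^{k-2}\bigl(sR^\chi(X^\perp,\overline{X^\perp},Y^\perp,\overline{Y^\perp})\bigr)-k(k-1)s^{k-2}\mathcal B(X,Y)\le -k(k-1-C)s^{k-2}\mathcal B(X,Y).
\end{equation*}
Here $\mathcal B\ge0$. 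Choosing an integer $k>\max\{3,C+1\}$ makes $\lambda=k(k-1-C)>0$. With $m=2$ the same formula gives
\begin{equation*}
R^{\chi_2}(X,\overline X,Y,\overline Y)=2sR^\chi(X^\perp,\overline{X^\perp},Y^\perp,\overline{Y^\perp})-2\mathcal B\le 2(C-1)\mathcal B .
\end{equation*}
So both claims follow from the key estimate.

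To prove the key estimate, I would first use the holomorphic fiber dilations $\mu_t(v)=tv$ for $t\in\C^*$. Since $\mu_t^*s=|t|^2s$, we get $\mu_t^*\chi=|t|^2\chi$. The Chern curvature $(0,4)$-tensor of $|t|^2\chi$ equals $|t|^2R^\chi$, since the connection is unchanged and the lowered tensor scales with the metric. The decomposition $X\mapsto X^\perp$ in \eqref{XY} commutes with $d\mu_t$: $\mathcal E$ is $\mu_t$-invariant, and $\partial s(X)/s$ is scale invariant. Hence under pullback both $sR^\chi(X^\perp,\dots)$ and $\mathcal B(X,Y)$ pick up the same factor $|t|^4$. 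The inequality is therefore invariant, and it suffices to verify it on the unit sphere bundle $S(E)=\{s=1\}$. This set is compact because $M$ is compact.

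On $S(E)$, the vectors $X^\perp$ range over the $\chi$-orthogonal complement $\mathcal E^{\perp_\chi}$. This is a smooth vector bundle over $S(E)$, since $\chi$ is positive definite on $N^\times$ (as noted before Lemma~\ref{Euler}). Positivity uses Griffiths negativity of $h$: the horizontal part of $\chi$ is $-\langle\Theta^h v,v\rangle>0$. Now $R^\chi$ is a continuous sesquilinear-in-each-slot form on this bundle. By compactness of the $\chi$-unit sphere bundle of $\mathcal E^{\perp_\chi}$ over $S(E)$, there is $C'$ with
\begin{equation*}
|R^\chi(X^\perp,\overline{X^\perp},Y^\perp,\overline{Y^\perp})|\le C'|X^\perp|_\chi^2|Y^\perp|_\chi^2\le C'\mathcal B(X,Y).
\end{equation*}
Setting $C=\max\{1,C'\}$ finishes the argument.

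The main point requiring care is the scaling step. One must check that the curvature tensor, the projection $X\mapsto X^\perp$, and $\mathcal B$ all have the stated homogeneity, so that the reduction to the compact set $\{s=1\}$ is legitimate. After that reduction, the rest is a soft compactness argument.
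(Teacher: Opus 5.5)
Your proposal is correct and follows essentially the same route as the paper: a compactness bound $|R^\chi(X^\perp,\overline{X^\perp},Y^\perp,\overline{Y^\perp})|\le C|X^\perp|_\chi^2|Y^\perp|_\chi^2\le C\mathcal B(X,Y)$ over $\{s=1\}$, extended by fiber dilation to $sR^\chi(X^\perp,\overline{X^\perp},Y^\perp,\overline{Y^\perp})\le C\mathcal B(X,Y)$ on all of $N^\times$, and then Proposition~\ref{PowerCurvature} with $m=k$ and $m=2$. Your check that $R^\chi$, the projection $X\mapsto X^\perp$, and $\mathcal B$ have matching homogeneity under $\mu_t$ spells out what the paper simply calls ``the scaling relation.''
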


\begin{proof}
Since the unit sphere bundle $\Sigma=\{s=1\}\subset N$
is compact, for all $X,Y$ over $\Sigma$, there exists $C\ge1$ such that
\begin{equation}
\left|R^\chi(X^\perp,\overline{X^\perp},
Y^\perp,\overline{Y^\perp})\right|\le
C|X^\perp|_\chi^2|Y^\perp|_\chi^2\leq C \mathcal B(X,Y)\text{ over } \Sigma.
\end{equation}

The scaling relation gives 
\begin{equation}
R^\chi
(X^\perp,\overline{X^\perp},
Y^\perp,\overline{Y^\perp})
\le
\frac{C}{s}\mathcal B(X,Y) \text{ over } N^\times.
\end{equation}

By Proposition~\ref{PowerCurvature},
\begin{equation} R^{\chi_m}(X,\bar X,Y,\bar Y) \le m(C-m+1)s^{m-2}\mathcal B(X,Y). \end{equation}

Taking $m=k>C+1$ and $m=2$ yields
the two assertions.
\end{proof}

\begin{proposition}\label{NearXeroFinal}There exist \(b,\delta>0\) such that, for every \(A>0\),
both \(g_b\) and \(g_u\) have $\HBC <0$ on \(\{s<\delta\}\). Equivalently,
\begin{equation}
R^{g_b}(X,\overline X,Y,\overline Y)<0,
\qquad
R^{g_u}(X,\overline X,Y,\overline Y)<0
\end{equation}
for all nonzero \((1,0)\)-tangent vectors \(X,Y\) at points
of \(\{s<\delta\}\). 
\end{proposition}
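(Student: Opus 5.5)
The plan is to handle the zero section $M=\{s=0\}$ first, obtain strict negativity there uniformly, extend it to a neighbourhood by compactness, and then pass from $g_b$ to $g_u$ by a comparison inequality that does not depend on $A$.

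\emph{Step 1: the curvature of $g_b$ on $M$.} Fix $p\in M$ and split $X=X_h+X_v$, $Y=Y_h+Y_v$, where $X_h,Y_h\in T^{1,0}_pM$ and $X_v,Y_v\in E_p$ (orthogonal for $g_b$ at $p$, since $\chi$ and $\chi_2$ have no mixed component there). I would use the fiberwise $U(1)$-action $v\mapsto e^{i\theta}v$, which fixes $M$ pointwise and preserves $g_b$. A curvature component at $p$ whose holomorphic slots and antiholomorphic slots carry different numbers of vertical entries picks up a nontrivial character under this action, so it vanishes. The surviving components are:
\begin{itemize}
\item purely horizontal: these equal $R^M$, since $M$ is totally geodesic, again by the $U(1)$-symmetry, so $\Pi^M=0$ in Proposition~\ref{subcurvature};
\item of type $(h,h,v,v)$: these are the Chern curvature terms $\langle\Theta^E(X_h,\overline{Y_h})Y_v,X_v\rangle$, because $\chi_2$ vanishes to order $4$ along $M$ and so does not contribute;
\item purely vertical: these come only from the flat model $|v|^2+b|v|^4$ on the fiber, which gives $-2b\bigl(|X_v|^2|Y_v|^2+|\langle X_v,Y_v\rangle|^2\bigr)$.
\end{itemize}

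\emph{Step 2: $b$ large.} Compactness of $M$, $\HBC_{g_M}<0$ and $E<_{\Gr}0$ give a constant $c>0$ with
\[
R^M\le -c|X_h|^2|Y_h|^2,\qquad \langle\Theta^E(X_h,\overline{X_h})Y_v,Y_v\rangle\le -c|X_h|^2|Y_v|^2,
\]
and likewise with the roles of $X$ and $Y$ exchanged. The cross terms are bounded by $C|X_h||Y_h||X_v||Y_v|$. By Young's inequality this is at most $\varepsilon|X_h|^2|Y_h|^2+C\varepsilon^{-1}|X_v|^2|Y_v|^2$. Taking $\varepsilon<c$ and $b>C/(2\varepsilon)$ makes the total strictly negative for all nonzero $X,Y$ at points of $M$.

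Since the unit-vector pairs over $\{s\le 1\}$ form a compact set and $R^{g_b}$ is continuous up to $M$, the negativity persists on $\{s<\delta\}$ for some $\delta>0$. This proves the claim for $g_b$.

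\emph{Step 3: the passage to $g_u=g_b+A\chi_k$, uniformly in $A$.} I would use subadditivity of bisectional curvature under sums of Kähler metrics. At a point, take holomorphic coordinates and constant-coefficient extensions of $X$ and $Y$, and write
\[
R^{g}(X,\overline X,Y,\overline Y)=-\partial_X\partial_{\overline X}g(Y,\overline Y)+\bigl|\partial_Xg(Y,\cdot)\bigr|^2_{g^{-1}}.
\]
The first term is linear in $g$. For the second, the convexity inequality $|a+b|^2_{(G_1+G_2)^{-1}}\le |a|^2_{G_1^{-1}}+|b|^2_{G_2^{-1}}$ gives
\[
R^{g_1+g_2}\le R^{g_1}+R^{g_2}.
\]
On $N^\times$, Proposition~\ref{RN2} (with $k>C+1$) gives $R^{A\chi_k}\le 0$, hence $R^{g_u}\le R^{g_b}<0$ on $\{0<s<\delta\}$ for every $A>0$. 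On $M$ itself $\chi_k$ is degenerate, so I would argue directly: $s^k$ vanishes to order $2k\ge 6$ along $M$. The metric coefficients of $A\chi_k$ and their first and second derivatives therefore vanish on $M$, so $R^{g_u}=R^{g_b}$ there.

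The main obstacle I expect is Step 1, namely justifying the exact form of the curvature of $g_b$ along the zero section: the vanishing of the odd components and the identification of the $(h,h,v,v)$ part with $\Theta^E$. I would verify this in holomorphic normal frames of $h$ at $p$, where $s=|v|^2+O(|z|^2|v|^2)$ with the $|z|^2$ coefficient given by $-\Theta^E$.
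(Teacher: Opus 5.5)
Your proof is correct and follows the paper's skeleton. The steps are: negativity of $R^{g_b}$ on the zero section for $b$ large; extension to $\{s<\delta\}$ by compactness; $R^{g_u}\le R^{g_b}+AR^{\chi_k}\le R^{g_b}$ on $N^\times$ by subadditivity and Proposition~\ref{RN2}; and $R^{g_u}=R^{g_b}$ on $M$ because $\chi_k$ vanishes to high order there.

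Where you differ is the choice of $b$ on $M$. The paper writes $R^{g_b}=R^{g_\chi}-bS_0$ with $S_0=2|h(X_v,Y_v)|^2+2|X_v|^2|Y_v|^2$. It checks $R^{g_\chi}<0$ on the compact set of unit pairs with $S_0=0$ (one of $X_v,Y_v$ zero), which gives negativity on $\{S_0<\varepsilon\}$ independently of $b$. It then takes $b$ large on $\{S_0\ge\varepsilon\}$. This only needs $R^{g_\chi}$ bounded on unit pairs and never estimates the mixed terms $\langle\Theta^E(X_h,\overline{Y_h})Y_v,X_v\rangle$. You instead decompose $R^{g_b}|_M$ completely by $U(1)$-weights and absorb the mixed terms with Young's inequality. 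That requires identifying every surviving component, but it gives an explicit threshold for $b$ in terms of $c$ and $C$. Your weight argument also justifies the vanishing of odd-weight terms such as $R^{g_\chi}(X_h,\overline{X_h},Y_h,\overline{Y_v})$, which the paper uses without comment in its case $S_0=0$.

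Two small remarks.

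First, the identification you flag as the main obstacle needs no normal-frame computation. $M$ is totally geodesic, so $T^{1,0}M$ and hence its $g_b$-orthogonal complement $E$ are parallel along $M$. Thus $\Pi^E=0$ in Proposition~\ref{subcurvature}(i). Since $g_b$ restricts to $h$ on $E$ along $M$, the $(h,h,v,v)$ components of $R^{g_b}$ are exactly those of $\Theta^{E,h}$, for every $b$.

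Second, it is $s^2$, not $\chi_2$, that vanishes to order $4$ along $M$. $\chi_2$ vanishes only to order $2$, and it does contribute on $M$: that is the term $-2b(\cdots)$ you use. It stays out of the mixed components because those involve only two fiber derivatives of the bidegree-$(2,2)$ function $s^2$.
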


\begin{proof}
The strategy is that we first show $R^{g_b}$  is negative on the compact zero section $\{s=0\}$ and thus remains negative near $\{s=0\}$ by continuity.

Note that $g_\chi=g_b$ and $g_{\chi_2}=\partial g_{\chi_2}=0$ on the zero section $\{s=0\}$. By homogeneity, it suffices to show \begin{equation}R^{g_b}(X,\overline X,Y,\overline Y)<0\end{equation} for nonzero $X=(X_1,X_2)$ and $Y=(Y_1,Y_2)$ with $|X|_{g_\chi}=|Y|_{g_\chi}=1$ on $\{s=0\}$. Here we use the canonical splitting
$T^{1,0}_{0_p}N\simeq T^{1,0}_pM\oplus E_p$ along the zero section. Clearly, $R^{g_\chi}(X,\overline X,Y,\overline Y)$ is bounded above for such $X,Y$.

\begin{equation}
\begin{aligned}
R^{g_b}(X,\overline X,Y,\overline Y)
={}&
-\partial_X\partial_{\overline X}
\bigl(g_{b}(Y,\overline Y)\bigr)+
g_\chi^{-1}\!\left(
\partial_X(g_{b})(Y,\cdot),
\overline{\partial_X(g_{b})(Y,\cdot)}
\right)\\
&=-\partial_X\partial_{\overline X}
\bigl(g_\chi(Y,\overline Y)+b\chi_2(Y,\overline Y)\bigr)+g_\chi^{-1}\!\left(
\partial_X(g_\chi)(Y,\cdot),
\overline{\partial_X(g_\chi)(Y,\cdot)}
\right)\\
&=R^{g_{\chi}}(X,\overline X,Y,\overline Y)-b\,S_0 \text{ on } \{s=0\},
\end{aligned}
\end{equation}
where
\begin{equation}
\begin{aligned}
S_0:=\partial_X\partial_{\overline X}\chi_2(Y,\overline Y)
&=
\partial_{X_2}\partial_{\overline X_2}
\partial_{Y_2}\partial_{\overline Y_2}(s^2)\Big|_{w=0}\\
&=
\partial_{X_2}\partial_{\overline X_2}
\left(
2\,|\partial_{Y_2} s|^2
+
2s\,\partial_{Y_2}\partial_{\overline {Y}_2}s
\right)\Big|_{w=0}\\
&=
\partial_{X_2}\partial_{\overline {X}_2}
\left(
2|h({Y_2},w)|^2
+
2|w|_h^2|{Y_2}|_h^2
\right)\Big|_{w=0}\\
&=
2|h({X_2},{Y_2})|^2
+
2|{X_2}|_h^2|{Y_2}|_h^2\ge0.
\end{aligned}
\end{equation}

If $S_0 = 0$, we assume ${X_2}=0$ without loss of generality,
then
\begin{equation}
\begin{aligned}
R^{g_\chi}(X,\overline X,Y,\overline Y)&=
R^{g_\chi}\bigl((X_1,0),\overline{(X_1,0)},(Y_1,{Y_2}),\overline{(Y_1,{Y_2})}\bigr)\\
&=
R^{g_M}(X_1,\overline X_1,Y_1,\overline Y_1)
+
R^{h}(X_1,\overline X_1,{Y_2},\overline Y_2)<0.
\end{aligned}
\end{equation}

Hence, there exists $\varepsilon>0$ such that
\begin{equation}
R^{g_b}(X,\overline X,Y,\overline Y)<0 \qquad \text{ on } \{S_0<\varepsilon\}.
\end{equation}

On the subset $\{S_0 \geq \varepsilon \}$, choose \(b>0\) sufficiently large so that
\begin{equation}R^{g_b}(X,\bar X,Y,\bar Y)=R^{g_\chi}(X,\bar X,Y,\bar Y)-bS_0\le C_{\chi}-b \varepsilon<0.\end{equation}

Finally, by compactness of $\{s=0\}$ and continuity of the
curvature tensor, there exists $\delta>0$ such that
\begin{equation}
R^{g_b}(X,\overline X,Y,\overline Y)<0
\end{equation}
for every nonzero $X,Y$ on $\{s<\delta\}$.

Since $g_u=g_b+A\chi_k$, Proposition~\ref{RN2} gives
\begin{equation}
R^{g_u}(X,\overline X,Y,\overline Y)
\le
R^{g_b}(X,\overline X,Y,\overline Y)
+
A\,R^{\chi_k}(X,\overline X,Y,\overline Y)<0 \qquad\text{on }\{0<s<\delta\}.
\end{equation}

It remains to consider the zero section. Since \(k\ge3\) and
\(s=O(|w|^2)\),
\begin{equation}
\chi_k
=
\sqrt{-1}\,\partial\overline\partial s^k
=
O(|w|^{2k-2}).
\end{equation}
Thus \(\chi_k\) and its derivatives up to order two vanish along
the zero section. Consequently, \(g_u\) and \(g_b\) have the same
curvature tensor there. Therefore
\begin{equation}
R^{g_u}(X,\overline X,Y,\overline Y)
=
R^{g_b}(X,\overline X,Y,\overline Y)
<0
\end{equation}
on the zero section as well.
\end{proof}

\begin{proposition}\label{AwayFromXero}
For $A>0$ sufficiently large, 
\begin{equation} R^{g_u}(X,\overline X,Y,\overline Y)<0 \end{equation} 
for every nonzero $X,Y$ on $\{s\ge\delta\}$. \end{proposition}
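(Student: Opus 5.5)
We need to show that for A large, R^{g_u}<0 on {s ≥ δ}. The metric is ω_u = π^*ω_M + χ + bχ_2 + Aχ_k = ω_b + Aχ_k, with b and δ already fixed by Proposition~\ref{NearXeroFinal}.

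The plan is to exploit the standard concavity of the quadrisectional curvature under addition of Kähler forms. Write g_u = g_b + Ag_k, where g_k is the metric of χ_k. In holomorphic coordinates with X,Y extended by constant coefficients,
\[
R^{g}(X,\overline X,Y,\overline Y)=-\partial_X\partial_{\overline X}g(Y,\overline Y)+\bigl|\partial_X g(Y,\cdot)\bigr|^2_{g^{-1}} .
\]
The first term is linear in g. The second term is jointly convex in (g, ∂g), so that |a+b|^2_{(G+H)^{-1}} ≤ |a|^2_{G^{-1}}+|b|^2_{H^{-1}}. This gives
\begin{equation}
R^{g_u}(X,\overline X,Y,\overline Y)\le R^{g_b}(X,\overline X,Y,\overline Y)+A\,R^{\chi_k}(X,\overline X,Y,\overline Y),
\end{equation}
which is exactly the inequality already used in Proposition~\ref{NearXeroFinal}. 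In the same way, R^{g_b} ≤ R^{g_\chi}+bR^{\chi_2}. By Proposition~\ref{RN2} we then obtain, on N^×,
\[
R^{g_u}\le R^{g_\chi}+2b(C-1)\mathcal B(X,Y)-A\lambda s^{k-2}\mathcal B(X,Y).
\]
Here g_χ means π^*ω_M+χ, whose curvature is controlled in the same way: it is at most R^{χ} plus the pullback of R^{g_M}, and the pullback term is nonpositive.

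The main obstacle is that \mathcal B only sees the components X^⊥, Y^⊥ orthogonal to \mathcal E. The negative term therefore vanishes when X or Y is proportional to \mathcal E, and positivity must be excluded in those directions separately. I would handle this by a scaling and compactness argument over the region {s ≥ δ}.

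\textbf{Step 1: the \mathcal E-direction.} Since R^{χ_m}(\mathcal E,·,·,·)=0 by Lemma~\ref{Euler}, the positive error terms 2b(C-1)\mathcal B and R^χ(X^⊥,…) involve only perpendicular components. What remains are the terms from R^{g_\chi} mixing π^*ω_M with χ. For these I would prove, using the bound |R^χ(X^⊥,…)| ≤ (C/s)\mathcal B from the proof of Proposition~\ref{RN2}, that
\[
R^{g_\chi}(X,\overline X,Y,\overline Y)\le C' \mathcal B(X,Y)\max(1,s^{-1}).
\]
Equivalently, R^{g_\chi} is bounded above by a multiple of \mathcal B. This is the delicate point. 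I expect it to follow from Lemma~\ref{Euler}: \nabla^{χ}\mathcal E=\mathrm{Id} makes \mathcal E-components curvature-free for χ, and the horizontal part coming from π^*ω_M is nonpositive because HBC_M<0. Any term not controlled by \mathcal B therefore carries a good sign.

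\textbf{Step 2: choosing A.} On {s ≥ δ} we have s^{k-2} ≥ δ^{k-2}. It then suffices to take
\[
A\lambda\delta^{k-2} > 2b(C-1)+C'\delta^{-1},
\]
which gives R^{g_u} ≤ -c\,\mathcal B.

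\textbf{Step 3: strictness.} Strict negativity must also hold when \mathcal B(X,Y)=0, that is, when X^⊥=0 or Y^⊥=0. In that case, say X=a\mathcal E, the χ_k, χ_2 and χ contributions vanish. The remaining terms come from π^*ω_M together with the convexity defect. I would show directly that R^{g_u}(\mathcal E,\overline{\mathcal E},Y,\overline Y)<0 using the Griffiths-negativity of h: the fiber-radial curvature is governed by R^h(·,·,v,\overline v)<0, mirroring the computation on the zero section in Proposition~\ref{NearXeroFinal}. Homogeneity in the fiber scaling then upgrades the estimate to all of {s ≥ δ}, and I would follow this by a compactness argument on {δ ≤ s ≤ 1} together with the scaling bound for s ≥ 1.
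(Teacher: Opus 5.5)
Your Steps 1--2 are fine and reproduce the paper's upper bound. The paper obtains it from the Gauss equation for the diagonal immersion $\Xi\colon q\mapsto(\pi(q),q,q,q)$ into $(M\times N^\times\times N^\times\times N^\times,\ g_M\oplus\chi\oplus b\chi_2\oplus A\chi_k)$. This keeps the exact error term $-\|\Pi^\Xi(X,Y)\|^2$ and handles the degenerate summand $\pi^*\omega_M$ cleanly. Your ``delicate'' Step 1 is in fact immediate, since $R^{g_M}(d\pi X,\ldots)\le 0$ and $R^\chi(X,\overline X,Y,\overline Y)=R^\chi(X^\perp,\overline{X^\perp},Y^\perp,\overline{Y^\perp})\le (C/s)\mathcal B(X,Y)$.

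\textbf{The gap is Step 3.} When $X=a\mathcal E$, every term on the right of your inequality vanishes. Lemma~\ref{Euler} gives $R^{\chi_m}(\mathcal E,\cdot,\cdot,\cdot)=0$, and also $R^{g_M}(d\pi\mathcal E,\ldots)=0$ because $d\pi(\mathcal E)=0$. So, contrary to what you write, $\pi^*\omega_M$ contributes nothing, and your bound yields only $R^{g_u}\le 0$. Strictness must come entirely from the convexity defect you discarded, and you never show it is nonzero. The mechanism you suggest instead does not work:
\begin{itemize}
\item For $X=Y=\mathcal E$ the curvature is a purely fiberwise radial quantity determined by $u$; it does not involve $R^h$ at all. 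So Griffiths negativity of $h$ cannot be what makes it negative.
\item The zero-section computation relied on the fourth-order term $S_0$ of $s^2$ at $w=0$, and has no analogue on $\{s\ge\delta\}$.
\item $g_u=\pi^*g_M+\chi+b\chi_2+A\chi_k$ mixes terms of different fiber homogeneity, so there is no scaling invariance to ``upgrade'' an estimate.
\end{itemize}
Compactness is also beside the point, since only pointwise strictness is needed.

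\textbf{The missing idea is to keep the defect and use its equality case.} In
\[
\Bigl|\sum_i a_i\Bigr|^2_{(\sum_i G_i)^{-1}}\le\sum_i|a_i|^2_{G_i^{-1}},
\]
with $G_i>0$ and $a_i=\partial_Y g_i(X,\cdot)$, equality holds only if the vectors $G_i^{-1}a_i$ coincide. Equivalently, the Chern connections of the summands must agree on $(Y,X)$; their differences are tensors, so this is a pointwise condition. For $X=\mathcal E$, Lemma~\ref{Euler} gives $\nabla^{\chi_2}_Y\mathcal E=2Y$ and $\nabla^{\chi_k}_Y\mathcal E=kY$. These differ for $Y\ne0$ because $k\ge3$, so the defect is strictly positive and $R^{g_u}(\mathcal E,\overline{\mathcal E},Y,\overline Y)<0$.

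This is precisely the paper's argument. $\Pi^\Xi(\mathcal E,Y)$ is the normal component of $\nabla^{\mathrm{amb}}_{d\Xi(Y)}d\Xi(\mathcal E)=(0,Y,2Y,kY)$, which is not of the tangent form $(d\pi Z,Z,Z,Z)$. Hence $\mathcal B(X,Y)$ and $\Pi^\Xi(X,Y)$ never vanish simultaneously, and
\[
R^{g_u}\le\Bigl(\tfrac{C}{s}+2b(C-1)-A\lambda s^{k-2}\Bigr)\mathcal B(X,Y)-\|\Pi^\Xi(X,Y)\|^2<0
\]
on $\{s\ge\delta\}$ once $A$ is large.
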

\begin{proof}
Consider the holomorphic diagonal immersion
\begin{equation}\begin{aligned}
\Xi:(N^\times,g_u)
&\longrightarrow
(M \times N^\times\times N^\times\times N^\times,g_M\oplus\chi\oplus b\chi_2\oplus A\chi_k),\\
q &\longmapsto(\pi(q),q,q,q),
\end{aligned}
\end{equation}
with $\Xi^*\bigl(g_M\oplus\chi\oplus b\chi_2\oplus A\chi_k\bigr)=g_u.$

The K\"ahler-Gauss equation gives
\begin{equation}
\begin{aligned}
R^{g_u}(X,\overline X,Y,\overline Y)
={}&
R^{g_M}(\, \cdot \, )
+
R^\chi(\, \cdot \, )+
bR^{\chi_2}(\, \cdot \, )+
A R^{\chi_k}(\, \cdot \, )
-
\|\Pi^{\Xi} (X,Y)\|^2\\
\le & \left( \frac{C}{s} + 2b(C-1) - A\lambda s^{k-2} \right)\mathcal B(X,Y) - \|\Pi^{\Xi} (X,Y)\|^2.
\end{aligned}
\end{equation}

Choose $A>0$ so large that
\begin{equation}
\frac{C}{s}+2b(C-1)-A\lambda s^{k-2}<0, \ \forall s\ge\delta.
\end{equation}

The key observation is that $\mathcal B(X,Y)=0$ and $\Pi^{\Xi} (X,Y)$ cannot both vanish.

Suppose $\mathcal B(X,Y)=\Pi^{\Xi} (X,Y)=0$. Then $\mathcal B(X,Y)=0$ implies $X^\perp=0
$ or $Y^\perp=0$. 
We assume $X=c\, \mathcal E,\  c\neq0$. If $\Pi^{\Xi} (\mathcal E,Y)=0$, then the vector \begin{equation}\nabla^{\mathrm{amb}}_{d\Xi(Y)}d\Xi(\mathcal E)=(0,Y,2Y,kY)\end{equation} is tangent to $\Xi(N^\times)$. It is a contradiction since every tangent vector to $\Xi(N^\times)$ has the form
\begin{equation}
(d\pi(Z),Z,Z,Z). 
\end{equation}

\end{proof}

Theorem~\ref{Rgu} follows by Propositions~\ref{NearXeroFinal} and~\ref{AwayFromXero}.

\section{Counterexamples in the Nonpositive Case}\label{Cex}

In this section, we construct a compact K\"ahler manifold \(M\) with \(\cHBC_{<0}\) and a Griffiths-seminegative holomorphic line bundle \(L\to M\) whose total space admits no K\"ahler metric with nonpositive holomorphic bisectional curvature. Taking direct sums \(L^{\oplus r}\) yields counterexamples of arbitrary rank.

A torsion-free cocompact lattice $\Lambda<\mathrm{SU}(2,1)$ acting 
on the unit ball 
\begin{equation}
(\mathbb B^2,\omega_{\mathbb B}:=\sqrt{-1}\partial\overline\partial \bigl[-\log(1-|z|^2)\bigr]) 
\end{equation}
in \(\mathbb C^2\)
gives a projective Kähler manifold $S=\mathbb B^2/ \Lambda$ with induced metric.

Consider the line bundle $K_S^{-2} \to S$ and the following commutative diagram
\begin{equation}
\begin{CD}
\pi_S^*K_S^{-2} @>>> K_S^{-2} \\
@VVV @VVV \\
H:=\mathbb{P}(T^{*1,0}S) @>{\pi_S}>> S
\end{CD}.
\end{equation}
It gives a line bundle 
\begin{equation}
L_H
:=
\mathcal{O}_H(-3)\otimes \pi_S^*K_S^{-2} \to H=\mathbb{P}(T^{*1,0}S).
\end{equation}
The complex hyperbolic metric induces a Hermitian metric $h_H$ on $L_H$.

\begin{proposition}\label{geometric-input}
The form
\begin{equation}
\alpha_H:=-\sqrt{-1}\,\Theta_{h_H}
\end{equation}
is semipositive with $\rank \alpha_H=2$. Set 
$D_H=\ker\alpha_H\subset T^{1,0}H$. Then the intrinsic tensor
\begin{equation}
\mathcal T^H(\overline X,Z):=[\overline\partial_{\overline X}Z]
\in T^{1,0}H/D_H \text{ for } X\in C^\infty(T^{1,0}H)\text{ and }  Z\in C^\infty(D_H)
\end{equation}
has two-dimensional image at every point.
\end{proposition}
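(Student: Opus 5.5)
The plan is to compute everything explicitly using the homogeneity of $\mathbb B^2$: reduce to a single point, and work in coordinates adapted to the Chern connection of $T^{1,0}S$. First I will fix the convention that $H=\mathbb P(T^{*1,0}S)$ parametrizes lines $[v]\subset T^{1,0}S$, so that $\mathcal O_H(-1)\subset\pi_S^*T^{1,0}S$ is the tautological line with the metric induced from $\omega_{\mathbb B}$. If the paper's Grothendieck convention is the opposite one, the same computation applies with $T^{*1,0}S$ in place of $T^{1,0}S$.

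The Chern connection of $(T^{1,0}S,\omega_{\mathbb B})$ splits $T^{1,0}H=\mathcal H\oplus\mathcal V$ into horizontal and vertical parts. I will use three standard inputs:
\begin{itemize}
\item the curvature of the ball, $R(X,\overline X,v,\overline v)=-c\bigl(|X|^2|v|^2+|\langle X,v\rangle|^2\bigr)$;
\item the first Chern form of $\det T^{1,0}S$, which is $-3c'\omega_{\mathbb B}$ for the appropriate normalization constant $c'$;
\item the subbundle curvature formula of Proposition~\ref{subcurvature}(i), applied to $\mathcal O_H(-1)\subset\pi_S^*T^{1,0}S$.
\end{itemize}
These give $\sqrt{-1}\Theta_{\mathcal O_H(-1)}$. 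On $\mathcal V$ it is $-\omega_{FS}$, the second fundamental form term. On $\mathcal H$ it is $\sqrt{-1}R(\cdot,\overline\cdot,v,\overline v)/|v|^2$.

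Combining $3$ copies of this with $\pi_S^*K_S^{-2}$, I expect the horizontal $K_S^{-2}$ term to cancel the $|X|^2$ part and leave
\begin{equation*}
\alpha_H = 3\,\omega_{FS}\big|_{\mathcal V}+3\Bigl(|X_{\mathcal H}|^2-\frac{|\langle X_{\mathcal H},v\rangle|^2}{|v|^2}\Bigr).
\end{equation*}
Checking that the constants $3$ and $-2$ are exactly what makes the horizontal part become $|X|^2-|\langle X,v\rangle|^2/|v|^2\ge0$ is the first step, and it fixes the normalization. From this formula, semipositivity and $\rank\alpha_H=2$ are immediate. Moreover $D_H$ is the horizontal lift of the tautological direction, i.e.\ the line field $D_H=\{\text{horizontal lift of }v\text{ at }[v]\}$.

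Next I will check that $\mathcal T^H$ is well defined and tensorial. For a smooth function $f$ we have $\overline\partial_{\overline X}(fZ)=f\,\overline\partial_{\overline X}Z+(\overline X f)Z$, and the last term lies in $D_H$. So $\mathcal T^H$ is $C^\infty$-linear in both arguments, and it vanishes for $X\in D_H$ once $D_H$ is computed to be holomorphic along itself. It therefore suffices to evaluate it on local frames at one point, and by $\mathrm{SU}(2,1)$-homogeneity (the lattice only acts by isometries) at $z=0$, $[v]=[e_1]$.

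Write the frame in coordinates $(z,[v])$ as $Z=v^i\bigl(\partial_{z_i}-\Gamma^k_{ij}(z)v^j\partial_{v^k}\bigr)$, with $\Gamma=h^{-1}\partial h$ holomorphic in $v$ but not in $z$. Then $\overline\partial_{\overline X}Z$ for horizontal $X$ produces the vertical vector
\begin{equation*}
-\,\partial_{\bar X}\Gamma^k_{ij}\,v^iv^j=R(\,\cdot\,,\overline X)v\cdot v,
\end{equation*}
which by the ball curvature is proportional to $\langle v,X\rangle v+|v|^2X$. Modulo the Euler direction this is a nonzero multiple of $X$ mod $v$, so it spans the vertical line of $T^{1,0}H/D_H$.

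The main obstacle is the second dimension of the image: a horizontal class transverse to $D_H$. I would try to get it from $\overline X$ vertical, using the fact that the horizontal lift is taken of a representative $v$ normalized by the (non-holomorphic) metric, or equivalently from the variation of $D_H$ along the fiber. As $[v]$ moves in $\mathbb P^1$, $D_H$ rotates inside $\mathcal H\cong T_zS$, so $\overline\partial$ of a unit-normalized section in fiber directions should pick up a component along $v^\perp\subset\mathcal H$. If the holomorphic frame gives zero there, I will redo the computation with the smooth frame $Z/|v|$ and track the $\overline\partial$ of the conjugate-linear coefficients.

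Once both components are shown nonzero and independent at the base point, homogeneity gives two-dimensional image everywhere.
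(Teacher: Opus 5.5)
\textbf{There is a genuine gap, at exactly the step you flag as the main obstacle. It comes from the convention, which is not interchangeable here.}

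\textbf{The convention.} With your choice $\mathcal O_H(-1)\subset\pi_S^*T^{1,0}S$, your own inputs give no cancellation. On $\mathcal H$ one gets $\alpha_H(X,\overline X)=3\bigl(|X|^2+|\langle X,v\rangle|^2/|v|^2\bigr)+6|X|^2$, with $c=c'=1$ for $\omega_{\mathbb B}$. Concretely, take the frame $v^{\otimes 3}\otimes(\partial_{z_1}\wedge\partial_{z_2})^{\otimes 2}$ at the point $z=0$, $t=0$ of the chart $[v]=[\partial_{z_1}+t\,\partial_{z_2}]$. There one finds $\alpha_H=\sqrt{-1}\,(12\,dz_1\wedge d\overline z_1+9\,dz_2\wedge d\overline z_2+3\,dt\wedge d\overline t)$, which has rank $3$.

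The rank-two statement holds only in the paper's convention. There a point of $H$ is a line of covectors $[\theta]$ with $\theta=dz_1+\xi\,dz_2$, and $\mathcal O_H(-1)\subset\pi_S^*T^{*1,0}S$. Since $T^{*1,0}S$ is Griffiths positive, the horizontal curvature of $\mathcal O_H(-1)$ changes sign, and one gets $\alpha_H|_{\mathcal H}=3\bigl(|X|^2-|\theta(X)|^2/|\theta|^2\bigr)$. So $D_H$ is the horizontal lift of the metric dual $\theta^\sharp=g^{i\overline j}\,\overline{\theta_j}\,\partial_{z_i}$. It is not the lift of a fiberwise holomorphic tautological vector.

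\textbf{The second direction of $\operatorname{Im}\mathcal T^H$.} This conjugate-linearity of $\theta\mapsto\theta^\sharp$ is the idea you are missing. Your frame $Z=v^i(\partial_{z_i}-\Gamma^k_{ij}v^j\partial_{v^k})$ is holomorphic along the fibers, so $\mathcal T^H(\overline X,Z)=0$ for vertical $X$. Your fallback cannot repair this. You proved yourself that $[\overline\partial_{\overline X}(fZ)]=f\,[\overline\partial_{\overline X}Z]$, so replacing $Z$ by $Z/|v|$ changes nothing. For a line field of the kind you describe, only the vertical class can occur, so the image is at most one-dimensional.

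In the correct picture the kernel generator depends antiholomorphically on the fiber coordinate. To first order, $Z=\partial_{z_1}+\overline\xi\,\partial_{z_2}+\overline z_2\,\partial_\xi$. Hence:
\begin{itemize}
\item $\overline\partial_{\overline\xi}Z=\partial_{z_2}$ is the horizontal class transverse to $D_H$, coming from $\theta\mapsto\theta^\sharp$;
\item $\overline\partial_{\overline z_2}Z=\partial_\xi$ is the vertical class coming from curvature, which is the one you did find.
\end{itemize}

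The paper obtains all of this at once. It writes $|e|_{h_H}^2=e^{3\psi}$ with $\psi=\log(1+|\xi|^2-|z_1+\xi z_2|^2)-\log(1-|z|^2)$ and expands $\psi$ through cubic order at the origin. From this it reads off $\alpha_H|_{(0,0)}$ and the first-order kernel vector, then concludes by homogeneity. Your connection-theoretic route to semipositivity and rank two is a valid alternative once carried out with $T^{*1,0}S$. As written, however, the proposal does not show that the image is two-dimensional.
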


We already have a line bundle $L_H \to H$ with $L_H \leq_{\Gr}0$ and $\rank \alpha_H=2$. The following proposition states that these properties can be pulled back to a compact Kähler manifold $M$ with $\HBC<0$.

\begin{proposition}
There exists a 3-dim compact Kähler manifold $M$ with $\HBC<0$, a generically finite map $\mu:M \to H$ and an open subset $V \subset M$ satisfying: 
\begin{enumerate}[label=(\arabic*)]
    \item The line bundle $L=\mu^* L_H$ is $\leq_{\Gr} 0$ with respect to the pullback metric.
\item The form $\alpha=\mu^* \alpha_H$ is semipositive and $\rank \alpha\leq 2$ on $M$.
\item For any $p \in V$, $\rank \alpha|_p=2$ and $\operatorname{Im}\mathcal T_p=T_p^{1,0}V/D_p$ where $\mathcal T=(\mu|_V)^* \mathcal T^H$ and $D=\ker\bigl(\alpha|_{T^{1,0}V}\bigr)$.
\end{enumerate}
\end{proposition}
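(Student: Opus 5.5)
The plan is to use Mohsen's theorem, as announced in the organization section. It produces, inside a compact projective manifold with $\HBC<0$ (here $H$ itself, or a suitable ball-quotient-type ambient space), complete-intersection-type submanifolds that are generic enough to carry negative curvature. Concretely, I would fix an ample line bundle $\mathcal A$ on $H$, which is projective because $S$ is. Mohsen's theorem then provides, for a sufficiently high tensor power $\mathcal A^{\otimes m}$ and generic sections, a compact Kähler manifold $M$ with $\HBC<0$. This $M$ is realized as a $3$-dimensional submanifold of some auxiliary space, e.g. a generic complete intersection in $H\times\mathbb P^n$ or a cyclic branched cover of $H$, whose projection $\mu:M\to H$ is generically finite, hence finite and étale over a dense open set. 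Since $\dim_\C H=3=\dim_\C M$, the map $\mu$ is surjective, and on the complement of its ramification and exceptional locus it is a local biholomorphism.

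Properties (1) and (2) are then formal pullback statements. The Chern curvature of $\mu^*h_H$ on $L=\mu^*L_H$ is $\mu^*\Theta_{h_H}$. By Proposition~\ref{geometric-input}, $-\sqrt{-1}\Theta_{h_H}=\alpha_H\ge0$, so $\alpha=\mu^*\alpha_H\ge0$, which is Griffiths seminegativity for a line bundle. The rank cannot increase under pullback: $\alpha_p(\xi,\bar\xi)=\alpha_H(d\mu\,\xi,\overline{d\mu\,\xi})$, so $\rank\alpha_p\le\rank\alpha_H=2$.

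For (3), I take $V$ to be the open dense set where $d\mu$ is invertible, shrunk if necessary to a coordinate chart on which $\mu|_V$ is a biholomorphism onto its image. On $V$ we have $\rank\alpha=2$ and $D=(d\mu)^{-1}D_H$. The tensor $\mathcal T$ is defined by $\overline\partial$-differentiation of sections of $D$ modulo $D$, and this is natural under biholomorphisms. Hence $\mathcal T_p$ corresponds to $\mathcal T^H_{\mu(p)}$ under $d\mu$, and its image is two-dimensional. The quotient $T^{1,0}_pV/D_p$ has dimension $3-\rank D_p$, where $\rank D_p=3-2=1$, so the quotient is $2$-dimensional. Therefore the image of $\mathcal T_p$ is all of $T^{1,0}_pV/D_p$.

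The main obstacle is the first step: arranging that Mohsen's construction yields an $M$ of dimension exactly $3$ equipped with a generically finite map to $H$, while keeping $\HBC<0$. I would first try Mohsen's theorem in the form producing a smooth complete intersection $M\subset H\times\mathbb P^{n}$ with $\HBC<0$ for the restriction of a suitable twisted metric, of codimension $n$, so that $\dim M=3$. Generic finiteness of the projection to $H$ would then follow from a dimension count on the generic fibers.
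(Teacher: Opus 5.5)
\textbf{Overall route.} Your route is the paper's: a Mohsen complete intersection $M\subset H\times\mathbb P^n$, with (1) and (2) obtained by pulling back $h_H$ and $\alpha_H$. Item (3) then comes from a small open set on which $\mu$ is a biholomorphism onto its image. Your count $\dim T^{1,0}_pV/D_p=2$ is exactly why the two-dimensional image of $\mathcal T_p$ fills the quotient.

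\textbf{The gap.} The gap is in the step you flag yourself, and the reasoning you give for it does not work. The sentence ``since $\dim_\C H=3=\dim_\C M$, the map $\mu$ is surjective'' is false as stated: equality of dimensions says nothing about the image, and a constant map is a counterexample. Your proposed repair is circular. A ``dimension count on the generic fibers'' gives finite generic fibers only once you know $\mu(M)=H$: if $\mu(M)$ were a proper analytic subset, every fiber over it would be positive-dimensional. Without this, $d\mu$ might be nowhere invertible, so $V$ could be empty and (3) would be vacuous.

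\textbf{The missing idea.} What is needed is a direct surjectivity argument, which is what the paper uses.
\begin{enumerate}
\item Cut out $M$ by $n$ sections of powers of the ample bundle $p_H^*\mathcal A\otimes p_{\mathbb P^n}^*\mathcal O_{\mathbb P^n}(1)$ on $H\times\mathbb P^n$. Do not use powers of $\mathcal A$ on $H$ itself: their sections cut out subvarieties of $H$ of dimension less than $3$.
\item For each $y\in H$, these sections restrict on $\{y\}\times\mathbb P^n$ to sections of $\mathcal O_{\mathbb P^n}(m_i)$ with $m_i\ge1$. Any $n$ hypersurfaces in $\mathbb P^n$ have a common zero, and this still holds if some restriction vanishes identically. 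Hence $M\cap(\{y\}\times\mathbb P^n)\neq\emptyset$, so $\mu$ is surjective.
\item Since the dimensions are equal, Sard's theorem forces $d\mu$ to have rank $3$ somewhere. So regular points exist and $\mu$ is generically finite.
\end{enumerate}

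\textbf{Choice of $n$.} You also need to fix $n$ to meet the dimension hypothesis of \cite[Theorem~1(e)]{Mohsen2022}. For a $3$-dimensional complete intersection the paper uses ambient dimension $\dim_\C(H\times\mathbb P^8)=11=4\cdot 3-1$, i.e.\ $n=8$.

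\textbf{A misstatement to drop.} Mohsen's theorem places no curvature hypothesis on the ambient space. Your opening description of it as working ``inside a compact projective manifold with $\HBC<0$ (here $H$ itself)'' is wrong: $H$ contains the rational curves $\pi_S^{-1}(x)\cong\mathbb P^1$, so it carries no metric with $\HBC<0$.
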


\begin{proof}
Since $H=\mathbb{P}(T^{*1,0}S)$ is projective, we can choose an ample line bundle $L'_H$ on $H$. Consider the two projections
\begin{equation}p_H:H\times\mathbb P^8\to H,\qquad p_{\mathbb P^8}:H\times\mathbb P^8\to\mathbb P^8.\end{equation}
The line bundle $
p_H^*L'_H\otimes p_{\mathbb P^8}^*\mathcal O_{\mathbb P^8}(1)
$
is ample on $H\times\mathbb P^8$.

Since $\dim_{\mathbb C}(H\times\mathbb P^8)=11=4\cdot3-1$, Mohsen's theorem \cite[Theorem~1(e)]{Mohsen2022} yields a smooth three-dimensional complete intersection $M\subset H\times\mathbb P^8$ with $\HBC<0$.

Its intersection with every fiber $\{y\}\times\mathbb P^8$ is nonempty, since it is cut out by eight hypersurfaces of positive degree. Thus the projection $\mu:M\to H$ is surjective and, since $\dim_{\mathbb C}M=\dim_{\mathbb C}H=3$, generically finite.

By the pullback metric on $L$, 
\begin{equation}
\alpha=\mu^*\alpha_H\ge0,\qquad
\operatorname{rank}\alpha\le2.
\end{equation}
Choose a connected neighborhood $V$ of a regular point of $\mu$ on which $\mu$ is biholomorphic onto its image. Proposition~\ref{geometric-input} then gives
\begin{equation}
\operatorname{rank}\alpha|_V=2,\qquad
\operatorname{Im}\mathcal T_p=T_p^{1,0}V/D_p,\ p\in V.
\end{equation}
\end{proof}

\begin{proof}[Proof of Proposition~\ref{geometric-input}]
Work on the universal cover. Put $z=(z_1,z_2)\in \mathbb B^2$. Consider an affine projective chart $(z_1,z_2,\xi)$ of $H$ represented by
$dz_1+\xi dz_2$. The squared norm of a local frame $e=(dz_1+\xi dz_2)^{\otimes3}\otimes(dz_1\wedge dz_2)^{-2}$ of $L_H$ is
$e^{3\psi}$, where
\begin{equation}\label{psi}
\psi(z,\xi)
=
\log\!\left(1+|\xi|^2-|z_1+\xi z_2|^2\right)
-\log(1-|z|^2).
\end{equation}
Indeed, $
|dz_1+\xi dz_2|^2
=
(1-|z|^2)\left(1+|\xi|^2-|z_1+\xi z_2|^2\right)$, and $
|dz_1\wedge dz_2|^2=(1-|z|^2)^3.$

Expand $\psi$ as
\begin{equation}
\psi(z,\xi) =|z_2|^2+|\xi|^2-z_1\overline z_2\overline\xi -\overline z_1z_2\xi+O(|(z,\xi)|^4).\end{equation}
Then
\begin{equation}
\alpha_H|_{(0,0)}=3\sqrt{-1}\,\partial\overline\partial\psi|_{(0,0)}=3\sqrt{-1}\left( dz_2\wedge d\overline z_2 + d\xi\wedge d\overline\xi \right).
\end{equation}

Thus at $(z,\xi)=(0,0)$, $\alpha_H \geq 0$ and $\rank \alpha_H=2$.

The holomorphic isometry group of $\mathbb B^2$ acts transitively on $\mathbb P(T^{*1,0}\mathbb B^2)$ and preserves the induced metric.
Thus $\alpha_H\ge0$ has constant rank two.

A local generator $Z$ of $D_H=\ker \alpha_H$, normalized to have $z_1$-component one,
 satisfies
\begin{equation}
Z
=
\frac{\partial}{\partial z_1}
+
\overline\xi\frac{\partial}{\partial z_2}
+
\overline z_2\frac{\partial}{\partial\xi}
+O(|(z,\xi)|^2).
\end{equation}
Consequently, at the origin,
\begin{equation}
T^{1,0}H/D_H|_{(0,0)}=\operatorname{Span}\{
\overline\partial_{\overline\xi}Z
=
[\frac{\partial}{\partial z_2}],
\
\overline\partial_{\overline z_2}Z
=
[\frac{\partial}{\partial\xi}]\}.
\end{equation}
By invariance, the same holds at
every point.
\end{proof}

Suppose, for contradiction, that $\operatorname{Tot}(L)$ admits a K\"ahler form
$\omega$ with $\operatorname{HBC}\le0$. 

\begin{proposition}\label{average}
The form
\begin{equation}
\eta
=
\frac{1}{2\pi}\int_0^{2\pi}\Rot_\theta^*\omega\,d\theta
\end{equation}
is an $S^1$-invariant K\"ahler form on $\operatorname{Tot}(L)$ with
$\operatorname{HBC}_\eta\le0$. Here 
\begin{equation}
\Rot_\theta(x,w)=(x,e^{\sqrt{-1}\theta}w)
\end{equation} is the scalar circle action on the fibers.
\end{proposition}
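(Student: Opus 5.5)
The plan is to check separately that $\eta$ is a Kähler form, that it is $S^1$-invariant, and that $\operatorname{HBC}_\eta\le0$. The third point is the only real one, since holomorphic bisectional curvature is not linear in the metric. I will handle it with the same diagonal-immersion and Gauss-equation device already used in the proof of Proposition~\ref{AwayFromXero}, applied first to finite convex combinations and then passed to the limit.

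\emph{Kähler and $S^1$-invariant.} Each $\Rot_\theta$ is a biholomorphism of $\operatorname{Tot}(L)$, and the family depends smoothly on $\theta$. Hence every $\Rot_\theta^*\omega$ is a closed positive real $(1,1)$-form, and it is a Kähler form isometric to $\omega$ via $\Rot_\theta$. In particular $\operatorname{HBC}_{\Rot_\theta^*\omega}\le0$. Differentiating under the integral sign shows that $\eta$ is smooth and closed. It is of type $(1,1)$, and it is pointwise positive as an average of positive forms. Invariance follows from $\Rot_\phi^*\Rot_\theta^*=\Rot_{\theta+\phi}^*$ and the translation invariance of $d\theta$ on $\R/2\pi\mathbb Z$.

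\emph{Finite convex combinations.} Let $\omega_1,\dots,\omega_n$ be Kähler forms on a complex manifold $X$ with $\operatorname{HBC}\le0$, and let $c_i>0$. Consider the holomorphic diagonal embedding
\[
\Xi:X\to X^n,\qquad q\mapsto(q,\dots,q),
\]
into the product carrying the Kähler metric $c_1\omega_1\oplus\cdots\oplus c_n\omega_n$. Its pullback is $\sum c_i\omega_i$. Scaling a Kähler metric by $c$ multiplies the $(0,4)$ curvature tensor by $c$. The curvature of a product is the sum of the factor curvatures. Therefore the Gauss equation (Proposition~\ref{subcurvature}(ii)) gives
\[
R^{\sum c_i\omega_i}(X,\overline X,Y,\overline Y)=\sum_i c_iR^{\omega_i}(X,\overline X,Y,\overline Y)-\|\Pi^\Xi(X,Y)\|^2\le0 .
\]
Applying this to Riemann sums, the forms $\eta_n:=\frac1n\sum_{j=0}^{n-1}\Rot_{2\pi j/n}^*\omega$ are Kähler with $\operatorname{HBC}\le0$.

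\emph{Passage to the limit.} This is the step requiring the most care. Because $(\theta,p)\mapsto\Rot_\theta^*\omega|_p$ is smooth, the Riemann sums $\eta_n$ converge to $\eta$ in $C^\infty_{\mathrm{loc}}$, and in particular in $C^2$ on compact sets. The curvature tensor $R^\eta(X,\overline X,Y,\overline Y)=-\partial_X\partial_{\overline X}\eta(Y,\overline Y)+|\partial_X\eta(Y,\cdot)|^2_{\eta^{-1}}$ depends continuously on the $2$-jet of the metric, as long as the metric stays nondegenerate. Nondegeneracy holds here since $\eta>0$ and $\eta_n\to\eta$ uniformly on compacts. Hence $R^{\eta_n}(X,\overline X,Y,\overline Y)\to R^\eta(X,\overline X,Y,\overline Y)$ pointwise for fixed vectors, and the closed condition $\le0$ passes to the limit. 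This yields $\operatorname{HBC}_\eta\le0$.

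Alternatively, one could avoid the limit by using a continuous version of the diagonal embedding into an infinite product. The Riemann-sum route is more elementary, so I will use it.
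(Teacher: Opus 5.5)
Your proof is correct and follows the paper's argument step for step. Averaging gives an $S^1$-invariant Kähler form, each $\Rot_\theta^*\omega$ has $\HBC\le0$, the finite averages $\eta_n$ have $\HBC\le0$ by convexity, and $C^\infty_{\mathrm{loc}}$ convergence passes the inequality to $\eta$. The only difference is that you prove the convexity of the cone of Kähler metrics with $\HBC\le0$ yourself, via the diagonal embedding and the Gauss equation, whereas the paper cites Zheng for it.
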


\begin{proof}
Averaging preserves smoothness, closedness, and positivity, so \(\eta\) is an \(S^1\)-invariant Kähler form. Since each \(\operatorname{Rot}_\theta\) is a holomorphic automorphism, we have
\begin{equation}
\HBC_{\operatorname{Rot}_\theta^*\omega}\le0.
\end{equation}

By the convexity of the cone of Kähler metrics with $\HBC \leq 0$ \cite[p.~671]{Zheng1993}, the finite averages
\begin{equation}
\eta_n=\frac1n\sum_{j=0}^{n-1}
\operatorname{Rot}_{2\pi j/n}^*\omega \text{ has } \HBC \leq 0.
\end{equation}

Since $\eta_n\longrightarrow\eta$ in $C^\infty_{\mathrm{loc}}$, $\HBC_\eta\le0$.
\end{proof}

\begin{proposition}\label{radial-form}
Let $\pi_L:\Tot(L|_V)\to V$ be the projection and put $s=|w|_h^2$.
There exist a K\"ahler form $\omega_V$ on $V$ and a smooth function
$\varphi :[0,\infty)\to\mathbb R$ such that
\begin{equation}\label{radial}
\eta
=
\pi_L^*\omega_V
+
\sqrt{-1}\,\partial\overline\partial \varphi (s)
\qquad\text{on }\operatorname{Tot}(L|_V).
\end{equation}
Set $\mathfrak m(s)=s\varphi'(s)$.
Then there is a constant $a_0>0$ such that
\begin{equation}\label{growth}
\mathfrak m(s)\ge a_0s\qquad(s\ge0).
\end{equation}
\end{proposition}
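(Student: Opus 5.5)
The plan has two parts. First I reduce $\eta$ to a radial potential over $V$. Then I get the linear lower bound on $\mathfrak m$ from plurisubharmonicity of the squared length of the fibre Euler field, which is where $\HBC_\eta\le0$ enters.

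\emph{Step 1 (potential).} Shrink $V$ to a Stein coordinate ball on which $L$ is trivialized by a frame $e$, so that a point of $\Tot(L|_V)$ is $(x,w)e$ and $s=|w|^2e^{-\psi(x)}$ with $\psi$ smooth. Then $\Tot(L|_V)\cong V\times\C$ is Stein and contractible, so $H^{1,1}$ vanishes and $\eta=\sqrt{-1}\partial\overline\partial\Phi$ globally there. Averaging $\Phi$ over $\Rot_\theta$ does not change $\eta$, by Proposition~\ref{average}, so we may take $\Phi=\Phi(x,s)$.

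\emph{Step 2 (main obstacle).} Show that $\Phi(x,s)=f(x)+\varphi(s)$ up to a pluriharmonic term. Set $\omega_V:=\sqrt{-1}\partial\overline\partial f$, which is Kähler because it equals $\eta$ restricted to the zero section. I would attack this through the moment map. The $S^1$-action is Hamiltonian for $\eta$ with moment map $\mu_\eta$, normalized to vanish on $M$. Since $\Tot(L)$ retracts onto $M$, this moment map is globally defined on $\Tot(L)$, and locally $\mu_\eta=s\,\partial_s\Phi$. The claim is equivalent to $\mu_\eta$ being a function of $s$ alone.

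The geometric input is as follows. For the holomorphic Euler field $\mathcal E$ and $\HBC_\eta\le0$, the identity
\[
\partial_X\partial_{\overline X}|\mathcal E|^2_\eta=|\nabla_X\mathcal E|^2-R^\eta(X,\overline X,\mathcal E,\overline{\mathcal E})\ \ge 0
\]
shows that $|\mathcal E|_\eta^2$ is plurisubharmonic on all of $\Tot(L)$. Moreover, $|\mathcal E|^2_\eta=s\,\partial_s\mu_\eta$. I would combine this with compactness of $M$ and of the level sets $\{s=c\}$, the latter compact because the metric on $L$ is fixed. The idea is a maximum-principle argument along the horizontal directions, in which the curvature term forces the $x$-derivatives of $\partial_s\mu_\eta$ to vanish. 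This step is the one I am least sure of. If it fails directly, I would first try to use the Griffiths-seminegativity of $h$ to control the horizontal part of $\sqrt{-1}\partial\overline\partial|\mathcal E|^2$.

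\emph{Step 3 (growth).} Once \eqref{radial} holds, restrict to a fibre $\C$. Kählerness on the fibre gives $\mathfrak m'(s)=\varphi'+s\varphi''>0$, and smoothness at $w=0$ gives $\mathfrak m(0)=0$ and $a_0:=\mathfrak m'(0)=\varphi'(0)>0$. On the fibre, $F(s):=|\mathcal E|^2_\eta=s\,\mathfrak m'(s)$ is a radial subharmonic function of $w$, so $u\mapsto F(e^u)$ is convex in $u=\log s$. As $u\to-\infty$, $F(e^u)\sim a_0e^u$, so the asymptotic slope of $\log F(e^u)$ is $1$. Applying convexity of $\log F$ in $u$ — which I would get from the stronger log-plurisubharmonicity of $|\mathcal E|^2$ when $\HBC\le0$, a Kato-type inequality — yields $\tfrac{d}{du}\log F\ge1$. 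Hence $F(s)/s=\mathfrak m'(s)$ is nondecreasing, so $\mathfrak m'(s)\ge a_0$. Integrating from $0$ gives $\mathfrak m(s)\ge a_0 s$, which is \eqref{growth}.
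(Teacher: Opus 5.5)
Your Step 2 is the whole content of the proposition, and the mechanism you sketch for it cannot work. Your reduction is correct: it suffices to show that $\nu=|\mathcal E|^2_\eta$ (equivalently the moment map) depends only on $s$, and $\nu$ is plurisubharmonic. But on Chern-horizontal lifts, plurisubharmonicity of $\nu$ yields only, for each $\tau>0$,
\[
A_\tau:=\sqrt{-1}\,\partial\overline\partial\nu_\tau+c_\tau\,\alpha\ge0\quad\text{on }M,\qquad \nu_\tau:=\nu(\cdot,\tau),\quad c_\tau:=\tau\,\partial_s\nu(\cdot,\tau)\ge0.
\]
So $\nu_\tau$ need not be plurisubharmonic on $M$. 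The seminegativity term $c_\tau\alpha$ has the unfavourable sign, and a horizontal maximum principle gives nothing.

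In fact no argument using only $\HBC_\eta\le0$, $S^1$-invariance, compactness of $M$ and seminegativity of $h$ can work, because the conclusion fails in that generality. Let $L$ be Griffiths-negative over a base with $\HBC<0$, take a nonconstant $\chi\in C^\infty(M)$, and take a cutoff $\rho$ with $\rho=1$ near $0$ and compact support. Then the $S^1$-invariant form $\omega_u+\epsilon\sqrt{-1}\,\partial\overline\partial\bigl(\chi(x)\rho(s)s\bigr)$, with $\omega_u$ from Section 5, still has $\HBC<0$ for small $\epsilon>0$, since the perturbation is compactly supported. Yet near the zero section $|\mathcal E|^2=su'(s)+s^2u''(s)+\epsilon\chi(x)s$, which depends on $x$.

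The missing idea is to exploit the special degeneracy of $(M,L)$. Since $d\alpha=0$ and $\alpha^3=0$ on the threefold $M$, integrating over all of the compact $M$ gives
\[
0=\int_M\sqrt{-1}\,\partial\overline\partial\bigl(e^{\nu_\tau}\bigr)\wedge\alpha^2=\int_M e^{\nu_\tau}\bigl(A_\tau+\sqrt{-1}\,\partial\nu_\tau\wedge\overline\partial\nu_\tau\bigr)\wedge\alpha^2 .
\]
Here $\alpha^3=0$ is exactly what kills the $c_\tau\alpha$ term. Both integrands are nonnegative. Hence on $V$, where $D=\ker\alpha$ has rank one, you get $\partial\nu_\tau|_D=0$ and $A_\tau(D,\overline D)=0$. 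By Cauchy--Schwarz and $\alpha(D,\cdot)=0$, this gives $(\partial\overline\partial\nu_\tau)(D,\overline{T^{1,0}V})=0$. Differentiating $\partial\nu_\tau(Z_D)=0$ along $\overline X$ then gives $\partial\nu_\tau(\overline\partial_{\overline X}Z_D)=0$. Since $\mathcal T$ maps onto $T^{1,0}V/D$, this forces $\partial\nu_\tau=0$, so $\nu_\tau$ is constant on the connected set $V$.

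None of these ingredients enters your plan: $\alpha^3=0$, the constant rank of $\alpha$ on $V$, and the surjectivity of $\mathcal T$. Note also that this step is global on $M$, so shrinking to a Stein ball is only useful afterwards.

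Granting $\nu=\nu(s)$, the rest of your proposal is sound. Passing from a radial moment map to $\Phi=f(x)+\varphi(s)$ via a local potential is a correct and shorter substitute for the paper's argument that $\eta-\sqrt{-1}\,\partial\overline\partial\varphi(s)$ is basic. Your Step 3 also works, since $\log|\mathcal E|^2_\eta$ is plurisubharmonic off the zero section when $\HBC\le0$, by Cauchy--Schwarz. The paper's route there is more direct: $|\partial_w|^2_\eta=h(x)\mathfrak m'(s)$ is itself plurisubharmonic, so $\mathfrak m'$ is radial subharmonic, hence nondecreasing, on each fibre.
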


\begin{proof}
Define the holomorphic Euler field $\mathcal E$, the real Euler field $\mathcal R$, and the vector field $T$ generated by the fiberwise $S^1$-action by
\begin{equation}
\mathcal E=w\frac{\partial}{\partial w}, \qquad  \mathcal R = w\frac{\partial}{\partial w} + \overline w\frac{\partial}{\partial\overline w}, \qquad T=iw\frac{\partial}{\partial w} -i\overline w\frac{\partial}{\partial\overline w}.
\end{equation}

Set $\nu=|\mathcal E|_\eta^2$.
Since $\mathcal E$ is holomorphic and $\operatorname{HBC}_\eta\le0$, the
Bochner identity gives
\begin{equation}\label{bochner}
(\partial\overline\partial \nu)(X,\overline X)
=
|\nabla_X\mathcal E|_\eta^2
-
R^\eta(X,\overline X,\mathcal E,\overline{\mathcal E})
\ge0.
\end{equation}
Thus $\nu$ is plurisubharmonic. Since $\eta$ is $S^1$-invariant, we write
\begin{equation}
\nu(x,w)=\nu(x,s): M\times[0,\infty) \to \R.
\end{equation}
For every $\tau>0$ set
\begin{equation}
\nu_\tau(x)=\nu(x,\tau),
\qquad
c_\tau(x)=\tau \nu_s(x,\tau).
\end{equation}
Evaluation of $\sqrt{-1}\partial\overline\partial \nu$ on the Chern-horizontal
lifts for $h$ gives
\begin{equation}\label{Atau}
A_\tau
:=
\sqrt{-1}\,\partial\overline\partial \nu_\tau+c_\tau\alpha
\ge0.
\end{equation}
Since $M$ is compact, $d\alpha=0$, and $\alpha^3=0$, Stokes' theorem gives
\begin{equation}\label{weighted-stokes}
\begin{aligned}
0
&=
\int_M\sqrt{-1}\,\partial\overline\partial(e^{\nu_\tau})\wedge\alpha^2\\
&=
\int_M e^{\nu_\tau}
\left(
A_\tau+
\sqrt{-1}\,\partial \nu_\tau\wedge\overline\partial \nu_\tau
\right)\wedge\alpha^2.
\end{aligned}
\end{equation}

Thus
\begin{equation}
A_\tau\wedge\alpha^2=0,
\qquad
\sqrt{-1}\partial \nu_\tau\wedge\overline\partial \nu_\tau
\wedge\alpha^2=0.
\end{equation}

Since $\rank \alpha=2$ on $V$, by choosing a basis $\{e_1,e_2,e_3\}$, we assume $D:=\ker \alpha=\C e_1$. Equation \eqref{weighted-stokes} gives
\begin{equation}\label{utau1}
\partial \nu_\tau|_{D}=0,
\qquad
A_\tau(D,\overline D)=0.
\end{equation}
$A_\tau\ge0$ implies $A_\tau(D,\overline{T^{1,0}V})=0$ by the same argument as in \eqref{Hv}. Since $\alpha(D,\overline{T^{1,0}V})=0$, we obtain
\begin{equation}\label{utau2}
(\partial\overline\partial \nu_\tau)
(D,\overline{T^{1,0}V})=0.
\end{equation}

Since $\operatorname{Im}\mathcal T=T^{1,0}V/D$ on $V$, combining \eqref{utau1} and \eqref{utau2}, 
\begin{equation}\begin{cases} \partial \nu_\tau|_D=0,\\ (\partial\overline\partial \nu_\tau) (D,\overline{T^{1,0}V})=0.\end{cases}\Longrightarrow \nu_\tau \text{ is constant on } V.\end{equation}

Indeed, for a local section $Z_D$ of $D$ and $X\in T^{1,0}V$, 
\begin{equation}
0=\overline X\bigl(\partial \nu_\tau(Z_D)\bigr)
=(\partial \overline\partial \nu_\tau)(Z_D,\overline X)
+\partial \nu_\tau(\overline\partial_{\overline X}Z_D).
\end{equation}

Thus $\partial \nu_\tau(\overline\partial_{\overline X}Z_D)=0.$ The induced map $\ell_p:T_p^{1,0}V/D_p\longrightarrow\mathbb C $ is zero on $\operatorname{Im}\mathcal T$. Since $\ell_p=0$, we obtain $ \partial\nu_\tau=0$ on the connected set. Hence \(\nu_\tau\) is constant on \(V\).

We now write 
\begin{equation}\label{Euler-radial}
\nu(x,s)=\nu(s)
\qquad\text{on }\operatorname{Tot}(L|_V).
\end{equation}

Smooth circle invariance at the zero section gives
\begin{equation}
\nu(s)=\eta(\mathcal E,\overline{\mathcal E})=s a(s),
\qquad a\in C^\infty([0,\infty)),
\qquad a(0)>0.
\end{equation}
Define
\begin{equation}
\mathfrak m(s)=\int_0^s\frac{\nu(t)}{t}\,dt,
\qquad
\varphi(s)=\int_0^s\frac{\mathfrak m(t)}{t}\,dt.
\end{equation}
Then $\mathfrak m=s\varphi'$ and $\nu=s\mathfrak m'$. The vertical restriction of
$\sqrt{-1}\partial\overline\partial \varphi(s)$ is the same as that of $\eta$.
Set
\begin{equation}
\beta
=
\eta-\sqrt{-1}\,\partial\overline\partial \varphi(s).
\end{equation}
Then $
d \beta=0$, $\mathcal L_T\beta=0$, and $\beta(W_1,W_2)=0$ for all vertical vector fields $W_1, W_2$. Cartan's formula gives $d(\iota_T\beta)=0$.

For every vertical vector field $W$,
\begin{equation}
 \mathcal L_W(\iota_T\beta)
 =d\bigl(\iota_W\iota_T\beta\bigr)
  +\iota_W d(\iota_T\beta)=0.
\end{equation}
Thus $\iota_T\beta$ is basic, so $\iota_T\beta=\pi_L^*\lambda$
for some $\lambda\in\Omega^1(V)$. Since $T$ vanishes along the zero
section $j:V\to\operatorname{Tot}(L|_V)$,
\begin{equation}
 \lambda=j^*(\iota_T\beta)=0.
\end{equation}
Hence $\iota_T\beta=0$. As $T=J\mathcal R$ and $\beta$ is a real
$(1,1)$-form, it follows that $\iota_{\mathcal R}\beta=0$.
Cartan's formula then yields
\begin{equation}
 \mathcal L_{\mathcal R}\beta
 =d(\iota_{\mathcal R}\beta)+\iota_{\mathcal R}(d\beta)=0.
\end{equation}
Consequently $\rho_t^*\beta=\beta$ for every $t>0$, where
$\rho_t(z,w)=(z,tw)$. Since $\rho_t\to\rho_0=j\circ\pi_L$
smoothly as $t\to0^+$,
\begin{equation}
\beta=\lim_{t\to0^+}\rho_t^*\beta
 =\rho_0^*\beta=\pi_L^*(j^*\beta).
\end{equation}

Finally, $j^*\partial\bar\partial\varphi(s)=0$, since
$j^*s=0$. Therefore, setting $\omega_V:=j^*\eta$, we obtain
\begin{equation}
\eta=\pi_L^*\omega_V+
\sqrt{-1}\partial\bar\partial\varphi(s).
\end{equation}

It remains to prove \eqref{growth}. Recall $s=|w|^2_h=h(x)|w|^2$.
\begin{equation}
\left|\partial_w \right|_\eta^2=\partial_w\partial_{\bar w}\varphi(s)=h(x)\bigl(\varphi'(s)+s\varphi''(s)\bigr)=h(x)\mathfrak m'(s).
\end{equation}

The Bochner identity gives
\begin{equation}
(\partial\overline\partial|\partial_w |_\eta^2)(Z,\overline Z)
=
|\nabla_Z \partial_w |_\eta^2
-
R^\eta(Z,\overline Z,\partial_w ,\overline \partial_w)\geq 0.
\end{equation}

Thus $\mathfrak m'(s)$ is a radial subharmonic function of $w$.
Hence the mean-value inequality gives
\begin{equation}
\mathfrak m'(s)\ge \mathfrak m'(0)=:a_0>0.
\end{equation}
Integrating from $0$ to $s$ yields $\mathfrak m(s)\ge a_0s$.
\end{proof}

\begin{proposition}\label{disc-contradiction}
For every $R>0$, there is a holomorphic disc
$\gamma_R:\mathbb D\to\operatorname{Tot}(L|_V)$ and a holomorphic section
$\sigma_R \in \Gamma(\gamma_R^*T^{1,0}\operatorname{Tot}(L|_V))$ such that the
subharmonicity of $v_R=|\sigma_R|_\eta^2$
forces
\begin{equation}\label{im}
3\varepsilon^2a_0R^2\le C_\varepsilon,
\end{equation}
where $\varepsilon>0$ and $C_\varepsilon$ are independent of $R$. 
\end{proposition}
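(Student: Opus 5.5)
The plan is to obtain the contradiction from the maximum principle for $|\sigma|_\eta^2$ along a holomorphic disc. The disc stays at fiber height $|w|\approx R$ and moves in the base from a point where $\sigma$ lies in $D=\ker\alpha$ to points where the twisting tensor $\mathcal T$ has rotated $\sigma$ out of $D$.

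\emph{Subharmonicity.} Since $\eta$ has $\HBC\le0$ (Proposition~\ref{average}), the Bochner identity applies to a holomorphic section $\sigma$ of $\gamma^*T^{1,0}\Tot(L|_V)$ along a holomorphic disc $\gamma$, exactly as in \eqref{bochner}:
\begin{equation*}
\partial_\zeta\partial_{\bar\zeta}|\sigma|_\eta^2=|\nabla_{\gamma'}\sigma|^2-R^\eta(\gamma',\overline{\gamma'},\sigma,\overline\sigma)\ge0 .
\end{equation*}
Hence $v(0)\le\frac1{2\pi}\int_0^{2\pi}v(e^{i\theta})\,d\theta$.

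\emph{Lower bound for the norm.} Next I would bound $|\sigma|_\eta^2$ from below using the radial form \eqref{radial}. Expanding,
\begin{equation*}
\sqrt{-1}\partial\overline\partial\varphi(s)=\varphi'\,\sqrt{-1}\partial\overline\partial s+\varphi''\,\sqrt{-1}\partial s\wedge\overline\partial s .
\end{equation*}
Two facts control this:
\begin{itemize}
\item $\varphi'+s\varphi''=\mathfrak m'>0$, so $\varphi''\ge-\varphi'/s$;
\item $\sqrt{-1}\partial\overline\partial\log s=\pi_L^*\alpha$, since $\alpha=-\sqrt{-1}\Theta_h$.
\end{itemize}
Together they give, for every $(1,0)$-vector $\xi$ at a point with $s>0$,
\begin{equation*}
|\xi|_\eta^2\ \ge\ |d\pi_L\xi|_{\omega_V}^2+\varphi'(s)\,s\,\alpha(d\pi_L\xi,\overline{d\pi_L\xi})\ \ge\ \mathfrak m(s)\,\alpha(d\pi_L\xi,\overline{d\pi_L\xi}).
\end{equation*}
Combined with \eqref{growth}, this yields $|\xi|_\eta^2\ge a_0 s\,\alpha(\xi,\bar\xi)$.

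\emph{Construction of the disc.} I would work in the chart $(z_1,z_2,\xi)$ of Proposition~\ref{geometric-input}, transported to $V$ by $\mu$, centered at a point $p$. I would choose the frame $e$ so that the weight $h=e^{3\psi}$ satisfies $h(p)=1$ and $d\psi(p)=0$, which the expansion of $\psi$ already provides. Let $w$ be the fiber coordinate. Define
\begin{equation*}
\gamma_R(\zeta)=(z=0,\ \xi=\varepsilon\zeta,\ w=R),\qquad \sigma_R=\partial/\partial z_1\ \text{(coordinate field)}.
\end{equation*}
\begin{itemize}
\item \emph{Center.} At $\zeta=0$, $\partial_{z_1}\in D$, and both $\partial s$ and $\sqrt{-1}\partial\overline\partial s=s\alpha+\sqrt{-1}\partial s\wedge\overline\partial s/s$ vanish on it. Hence $v_R(0)=|\partial_{z_1}|_{\omega_V}^2(p)$, which is independent of $R$.
\item \emph{Boundary.} For $|\zeta|=1$ we have $s=R^2e^{3\psi(0,\varepsilon\zeta)}\ge R^2(1-O(\varepsilon^2))$. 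I need the quartic term of $\psi$: $\log(1+|\xi|^2-|z_1+\xi z_2|^2)$ contributes $+|z_1|^2|\xi|^2$ at fourth order. This gives $\alpha(\partial_{z_1},\overline{\partial_{z_1}})=3\varepsilon^2(1+O(\varepsilon))$ at $\xi=\varepsilon\zeta$. This is the quantitative form of the twisting $\overline\partial_{\bar\xi}Z=[\partial_{z_2}]$ from Proposition~\ref{geometric-input}.
\end{itemize}
The lower bound above then gives $v_R\ge 3\varepsilon^2a_0R^2(1-O(\varepsilon))$ on $|\zeta|=1$. The mean value inequality yields $3\varepsilon^2a_0R^2\le C_\varepsilon$ after absorbing the $(1-O(\varepsilon))$ factor into $C_\varepsilon$ for a fixed small $\varepsilon$. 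Letting $R\to\infty$ gives the contradiction.

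\emph{Main obstacle.} The delicate step is the exact fourth-order expansion of $\psi$ in the $z_1\bar z_1|\xi|^2$ direction: it must have the right sign and the coefficient giving $3\varepsilon^2$. A second point is checking that the correction terms, namely the $O(\varepsilon^3)$ terms in $\alpha$ and the variation of $h$ along the disc, are controlled uniformly in $R$. If the coordinate field $\partial_{z_1}$ behaves badly, I would instead take $\sigma_R$ to be the local generator $Z$ of $D_H$, extended holomorphically in $\zeta$, and apply the same estimate.
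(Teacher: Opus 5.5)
\textbf{The final step does not follow: the inequality points the wrong way.} Subharmonicity of $v_R$ bounds the value at the center \emph{from above} by the boundary average. In your configuration the center value $v_R(0)=|\partial_{z_1}|^2_{\omega_V}(p)$ is bounded independently of $R$. On $|\zeta|=1$ you show $v_R\ge 3\varepsilon^2a_0R^2(1-O(\varepsilon))$. The two ingredients of that bound are correct: $\alpha(\partial_{z_1},\overline{\partial_{z_1}})=3|\xi|^2/(1+|\xi|^2)$ at $z=0$, and $|\cdot|^2_\eta\ge\mathfrak m(s)\,\alpha$. But a function that is small at the center and large on the boundary is exactly what subharmonicity permits. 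So the mean value inequality $v_R(0)\le\frac1{2\pi}\int_0^{2\pi}v_R(e^{i\theta})\,d\theta$ gives nothing, and $3\varepsilon^2a_0R^2\le C_\varepsilon$ does not follow. Switching to $Z$ does not help either, as long as the section lies in $D$ at the center and leaves $D$ toward the boundary.

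\textbf{What is needed is the reverse configuration.} You need a holomorphic section along the disc that lies in $D$ on the \emph{whole} circle $|\zeta|=1$ but not at $\zeta=0$, together with an \emph{upper} bound for $v_R$ on the boundary.

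\emph{The section.} Along $\gamma_R(\zeta)=(0,0,\varepsilon\zeta,R)$ the kernel $D$ is spanned by $\partial_{z_1}+\varepsilon\overline\zeta\,\partial_{z_2}$, which varies antiholomorphically in $\zeta$. So no $\zeta$-independent field can stay in $D$ on the circle. However, $\overline\zeta=\zeta^{-1}$ there, and the paper uses this by taking $\sigma_R=\zeta\,\partial_{z_1}+\varepsilon\,\partial_{z_2}$. On $|\zeta|=1$ this equals $\zeta(\partial_{z_1}+\overline\xi\,\partial_{z_2})\in D$, and along the disc $\alpha(\sigma_R,\overline{\sigma_R})=3\varepsilon^2(1-|\zeta|^2)^2/(1+\varepsilon^2|\zeta|^2)$. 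This vanishes on the boundary and equals $3\varepsilon^2$ at the center.

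\emph{The upper bound.} Your expansion gives $\sqrt{-1}\partial\overline\partial\varphi(s)=\mathfrak m(s)\,\alpha+\frac{\mathfrak m'(s)}{s}\sqrt{-1}\partial s\wedge\overline\partial s$. Only a lower bound on $\mathfrak m'$ is available, so the second term must vanish exactly on $\sigma_R$. This holds because $\partial_{z_j}s=0$ at $z=0$ gives $\partial s(\sigma_R)=0$ along $\gamma_R$, and hence
$v_R=|\sigma_R|^2_{\omega_V}+\mathfrak m(s)\,\alpha(\sigma_R,\overline{\sigma_R})$ exactly.

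\emph{Conclusion.} This identity gives $v_R\le C_\varepsilon$ on $|\zeta|=1$, uniformly in $R$. At the center, $v_R(0)\ge 3\varepsilon^2\mathfrak m(R^2)\ge 3\varepsilon^2a_0R^2$. The maximum principle then yields $3\varepsilon^2a_0R^2\le C_\varepsilon$.
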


\begin{remark}
Letting $R \to \infty$ in \eqref{im} yields a contradiction. Thus $\Tot(L)$ has no K\"ahler metrics with $\HBC \leq 0$.
\end{remark}

\begin{proof}[Proof of Proposition~\ref{disc-contradiction}]

Fix a regular point $p$ of $\mu$. Using the transitivity of the holomorphic isometry group on $\mathbb P(T^{*1,0}\mathbb B^2)$, choose local coordinates $(z_1,z_2,\xi)$ on $H$ near $\mu(p)=(0,0,0)$. After shrinking $V$ if necessary, we use these coordinates on $V$ via the biholomorphism $\mu|_V$.

Let $w$ be the fiber coordinate. Fix $\varepsilon>0$ sufficiently
small so that $(0,0,\varepsilon\zeta) \in V$ for $|\zeta|\le1$. For arbitrary
$R>0$, set
\begin{equation}
\gamma_R(\zeta)
=
(0,0,\varepsilon\zeta,R),
\qquad
\sigma_R(\zeta)
=
\zeta\frac{\partial}{\partial z_1}
+
\varepsilon\frac{\partial}{\partial z_2}.
\end{equation}

A direct computation gives
\begin{equation}\label{alpha-disc}
\alpha(\sigma_R,\overline{\sigma_R})
=
\frac{3\varepsilon^2(1-|\zeta|^2)^2}{1+\varepsilon^2|\zeta|^2},
\qquad
s(\gamma_R(\zeta))
=
R^2(1+\varepsilon^2|\zeta|^2)^3.
\end{equation}

Moreover, $\partial_{z_j}s=0$ at $z=0$,
so $\partial s(\sigma_R)=0$ along $\gamma_R$.
Proposition~\ref{radial-form} gives
\begin{equation}
v_R(\zeta)=|\sigma_R|_{g_V}^2 +\mathfrak m(s)\alpha(\sigma_R,\overline{\sigma_R}).
\end{equation}

Therefore, there exists a constant $C_\varepsilon>0$, independent of $R$, such that
\begin{equation}
v_R(\zeta)\le C_\varepsilon \text{ on }|\zeta|=1.
\end{equation}
\begin{equation}
v_R(0)
\ge
3\varepsilon^2 \mathfrak m(R^2)
\ge
3\varepsilon^2a_0R^2.
\end{equation}
The Bochner identity and $\operatorname{HBC}_\eta\le0$ imply that
$v_R$ is subharmonic. The maximum principle gives
\begin{equation}
3\varepsilon^2a_0R^2
\le
v_R(0)
\le
\max_{|\zeta|=1}v_R(\zeta)
\le
C_\varepsilon
\end{equation}
for every $R>0$.
\end{proof}

The above discussion gives a counterexample in the line bundle case. The compact K\"ahler manifold $M$ has $\cHBC_{<0}$ and admits a line bundle $L$ over it. But no K\"ahler metric with $\HBC \leq 0$ exists on the total space $N=\Tot(L \to M)$. 

\begin{corollary}\label{Counter}
The counterexample exists in every rank $r\ge1$.
\end{corollary}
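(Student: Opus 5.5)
The plan is to take the same base $M$ and put $E=L^{\oplus r}$ with the direct-sum metric $h^{\oplus r}$. Two things need checking. First, $E$ is Griffiths-seminegative: the Chern curvature of $h^{\oplus r}$ is $\Theta^{h}\otimes \Id_{\C^r}$, so for $X\in T^{1,0}M$ and $v=(v_1,\dots,v_r)\in E_p$,
\begin{equation*}
\langle\Theta^E(X,\overline X)v,v\rangle=\Theta^h(X,\overline X)\sum_j|v_j|_h^2\le0 .
\end{equation*}
Second, $M$ has $\cHBC_{<0}$ by construction. It then remains to show that $\Tot(E)$ carries no Kähler metric with $\HBC\le0$.

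The key step is a reduction to the rank-one case by restriction. Let $\iota_1:\Tot(L)\hookrightarrow\Tot(L^{\oplus r})$ be the inclusion as the first summand, $w\mapsto(w,0,\dots,0)$. This is a closed embedded complex submanifold. Suppose $\omega$ is a Kähler form on $\Tot(L^{\oplus r})$ with $\HBC\le0$. By the Gauss equation in Proposition~\ref{subcurvature}(ii), which holds for any Kähler submanifold, we get
\begin{equation*}
R^{\iota_1^*\omega}(X,\overline X,Y,\overline Y)=R^{\omega}(X,\overline X,Y,\overline Y)-\|\Pi(X,Y)\|^2\le0 .
\end{equation*}
So $\iota_1^*\omega$ is a Kähler form on $\Tot(L)$ with $\HBC\le0$. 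This contradicts the line-bundle counterexample established above through Propositions~\ref{average}, \ref{radial-form} and \ref{disc-contradiction}. Note that that argument never uses completeness, so completeness of $\iota_1^*\omega$ is not needed; in any case the restriction of a complete metric to a closed submanifold is complete.

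I expect no real obstacle here. The one point to state carefully is that the line-bundle argument excludes \emph{all} Kähler metrics with $\HBC\le0$ on $\Tot(L)$, complete or not. This is what makes the restriction argument sufficient. It also shows that the converse of Theorem~\ref{main1}(3) fails in every rank $r\ge1$.
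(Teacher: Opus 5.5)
Your proposal is correct and follows essentially the same route as the paper: take $(M,L^{\oplus r},h^{\oplus r})$, embed $\Tot(L)$ as the first summand, and use the Gauss equation to pull back a hypothetical Kähler metric with $\HBC\le0$ to $\Tot(L)$, contradicting the line-bundle case. Your explicit check of Griffiths seminegativity and your remark that the line-bundle argument never uses completeness are accurate and make explicit points the paper leaves implicit.
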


\begin{proof}
For the line-bundle counterexample $(M,L,h)$ above, consider
\begin{equation}
(M,L^{\oplus r},h^{\oplus r}).
\end{equation}
Then $h^{\oplus r}$ is Griffiths seminegative.

The inclusion of the first summand induces a holomorphic embedding
\begin{equation}
\iota_L:\operatorname{Tot}(L)\hookrightarrow\operatorname{Tot}(L^{\oplus r}).
\end{equation}
If $\operatorname{Tot}(L^{\oplus r})$ admitted a K\"ahler metric $G$ with
$\operatorname{HBC}_G\le0$, then the induced metric
$g=\iota_L^*G$ on $\operatorname{Tot}(L)$ would satisfy
\begin{equation}
R^g(X,\overline X,Y,\overline Y)
=
R^G(d\iota_L X,\overline{d\iota_L X},
d\iota_L Y,\overline{d\iota_L Y})
-
\|\mathrm{II}(X,Y)\|^2
\le0,
\end{equation}
contradicting the line-bundle case.
\end{proof}

\textbf{AI Declaration:} The author used ChatGPT as an auxiliary tool. The author checked all AI-assisted arguments, constructions, and calculations, and takes full responsibility for the mathematical content and correctness of this paper.

\bibliographystyle{unsrt}
\bibliography{HBCpaper}

\end{document}